\newcommand{\ym}[1]{}
\let\injlim\varinjlim
\let\projlim\varprojlim
\crefname{enumi}{}{}
\crefname{subsection}{Subsection}{Subsections}
\theoremstyle{plain}
\newtheorem{theorem*}{Theorem}
\renewcommand\csname thetheorem*\endcsname{\Roman{theorem*}}
\newtheorem{theorem}{Theorem}[section]
\newtheorem{proposition}[theorem]{Proposition}
\newtheorem{lemma}[theorem]{Lemma}
\newtheorem{corollary}[theorem]{Corollary}
\theoremstyle{definition}
\newtheorem{definition}[theorem]{Definition}
\newtheorem{example}[theorem]{Example}
\newtheorem{question}[theorem]{Question}
\theoremstyle{remark}
\newtheorem{remark}[theorem]{Remark}
\newcommand{\NN}{\mathbf{N}}
\newcommand{\ZZ}{\mathbf{Z}}
\newcommand{\QQ}{\mathbf{Q}}
\newcommand{\RR}{\mathbf{R}}
\newcommand{\E}{\mathbb{E}}
\newcommand{\h}{\mathrm{h}}
\newcommand{\hyp}{\textnormal{hyp}}
\newcommand{\op}{\textnormal{op}}
\newcommand{\coh}{\textnormal{coh}}
\newcommand{\cons}{\textnormal{cons}}
\newcommand{\st}{\textnormal{st}}
\newcommand{\perf}{\textnormal{perf}}
\newcommand{\pt}{\operatorname{pt}}
\newcommand{\Alg}{\operatorname{Alg}}
\newcommand{\Alex}{\operatorname{Alex}}
\newcommand{\Arch}{\operatorname{Arch}}
\newcommand{\B}{\operatorname{B}}
\newcommand{\Fun}{\operatorname{Fun}}
\newcommand{\Idl}{\operatorname{Idl}}
\newcommand{\Ind}{\operatorname{Ind}}
\newcommand{\Free}{\operatorname{Free}}
\newcommand{\Hom}{\operatorname{Hom}}
\newcommand{\Map}{\operatorname{Map}}
\let\P\relax
\newcommand{\P}{\operatorname{P}}
\newcommand{\PShv}{\operatorname{PShv}}
\newcommand{\QCoh}{\operatorname{QCoh}}
\newcommand{\Shv}{\operatorname{Shv}}
\newcommand{\Spc}{\operatorname{Spc}}
\newcommand{\Spec}{\operatorname{Spec}}
\newcommand{\Zar}{\operatorname{Zar}}
\newcommand{\X}{\text{--}}
\newcommand{\sgeq}{\smash\geq}
\newcommand{\sngeq}{\smash\ngeq}
\newcommand{\unit}{\mathbf{1}}
\newcommand{\cat}[1]{\mathcal{#1}}
\newcommand{\Cat}[1]{\mathsf{#1}}
\let\autocite\cite
\title[TTG of filtered objects and sheaves]%
{Tensor triangular geometry of filtered objects and sheaves}
\author{Ko Aoki}
\address{
    Department of Mathematics,
    Tokyo Institute of Technology, 2--12--1 \=Ookayama, Meguro-ku,
    Tokyo 152--8551, Japan
}
\email{aoki.k.an@m.titech.ac.jp}
\date{\today}
\begin{document}

\begin{abstract}
    We compute
    the the Balmer spectra of compact objects of
    tensor triangulated categories
    whose objects are
    filtered or graded objects of
    (or sheaves valued in)
    another tensor triangulated category.
    Notable examples include the filtered derived category
    of a scheme as well as the homotopy category
    of filtered spectra.
    We use an $\infty$-categorical method to
    properly formulate and deal with the problem.
    Our computations are based
    on a point-free approach, so that
    distributive lattices and semilattices
    are used as key tools.

    In the appendix, we prove
    that the $\infty$-topos of hypercomplete sheaves
    on an $\infty$-site is recovered from a basis,
    which may be of independent interest.
\end{abstract}

\maketitle

\section{Introduction}\label{s_intro}

In the subject called tensor triangular geometry,
a basic object to study
is a tt-category,
which is a triangulated category equipped with a compatible
symmetric monoidal structure.
Balmer introduced a way to associate
to each tt-category~$ \cat{T}^{\otimes} $ a topological space
$ \Spc(\cat{T}) $, which we call the Balmer spectrum.
We refer the reader to \autocite{Balmer10}
for a survey on tensor triangular geometry.

In this paper, we compute the Balmer spectra
of mainly two families of tt-categories, whose objects are diagrams
in another tt-category.
Although those two computations are logically independent,
many techniques used
in them are similar.

To state the main results, we introduce some terminology.
See \cref{hikaku} for the comparison with
the common setting of tensor triangular geometry.

\begin{definition}\label{04732c0097}
    A \emph{big tt-$ \infty $-category} is a compactly generated stable
    $ \infty $-category equipped with an $ \E_2 $-monoidal
    structure whose tensor products preserve (small) colimits separately in each variable
    and restrict to compact objects.
\end{definition}

We here state only a main consequence of the first computation
because stating it in full generality requires some notions.

\begin{theorem*}\label{cbaf04a35f}
    Suppose that $ \cat{C}^{\otimes} $ is a big tt-$ \infty $-category.
    \begin{enumerate}
        \item
            \label{e6bd26145c}
            For a nonzero Archimedean group $ A $
            (for example, $ \ZZ $, $ \QQ $, or~$ \RR $, equipped with their
            usual orderings),
            there is a canonical homeomorphism
            \begin{equation*}
                \Spc(\Fun(A, \cat{C})^{\omega})
                \simeq
                S \times \Spc(\cat{C}^{\omega}),
            \end{equation*}
            where $ S $ denotes the Sierpi\'nski space
            (that is, the Zariski spectrum of a discrete valuation ring);
            see \cref{6b47fa8c88}.
        \item
            \label{04e366edb0}
            For an abelian group~$ A $,
            considered as a discrete symmetric monoidal poset,
            there is a canonical homeomorphism
            \begin{equation*}
                \Spc(\Fun(A, \cat{C})^{\omega})
                \simeq
                \Spc(\cat{C}^{\omega}).
            \end{equation*}
    \end{enumerate}
    In each statement, we consider the Day convolution
    $ \E_2 $-monoidal structure on the $ \infty $-category $ \Fun(A, \cat{C}) $.
\end{theorem*}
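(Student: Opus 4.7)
The plan is to follow the point-free strategy advertised in the paper: for each tt-category in sight, identify the distributive lattice of radical thick tensor-ideals on its compact part and then recover the Balmer spectrum via Stone duality. Since products of spectral spaces correspond under Stone duality to tensor products of bounded distributive lattices, the substance lies in identifying the relevant lattice of ideals.

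For part~(\ref{04e366edb0}), the key point is that $A$ being a group makes each unit concentrated in degree $a$, written $\unit\langle a\rangle$, $\otimes$-invertible under Day convolution with inverse $\unit\langle -a\rangle$. A compact object of $\Fun(A, \cat{C})$ is a finite direct sum of shifts $X\langle a\rangle$ with $X \in \cat{C}^\omega$, and the ``total sum'' functor $\pi \colon \Fun(A, \cat{C}) \to \cat{C}$, $F \mapsto \bigoplus_a F(a)$, is symmetric monoidal for Day convolution on the source. Given a radical thick tt-ideal $J \subseteq \Fun(A, \cat{C})^\omega$, set $I = \{X \in \cat{C}^\omega : X\langle 0\rangle \in J\}$. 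Using that $X\langle a\rangle = X\langle 0\rangle \otimes \unit\langle a\rangle$ lies in $J$ iff $X\langle 0\rangle$ does (by invertibility), and that radical thick tt-ideals are closed under direct summands, one verifies $F \in J \iff \pi(F) \in I$. This shows $J \mapsto I$ is inverse to $\pi^{-1}$, producing the lattice bijection and hence the homeomorphism of spectra.

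For part~(\ref{e6bd26145c}), the Archimedean ordering affords two natural symmetric monoidal tt-functors from $\Fun(A, \cat{C})$: the colimit $\injlim_A F \in \cat{C}$ and the associated graded $\operatorname{gr} F \in \Fun(A^{\mathrm{disc}}, \cat{C})$, the latter having compact Balmer spectrum $\Spc(\cat{C}^\omega)$ by part~(\ref{04e366edb0}). These induce a continuous map
\[
    \Spc(\cat{C}^\omega) \sqcup \Spc(\cat{C}^\omega) \longrightarrow \Spc(\Fun(A, \cat{C})^\omega).
\]
On the lattice side, I would show that a radical thick tt-ideal $J \subseteq \Fun(A, \cat{C})^\omega$ is determined by the pair $(I_{\mathrm{gen}}, I_{\mathrm{spec}})$ of radical tt-ideals of $\cat{C}^\omega$ generated respectively by the colimits $\injlim F$ for $F \in J$ and by the pieces of $\operatorname{gr} F$ for $F \in J$, subject to the single containment $I_{\mathrm{gen}} \subseteq I_{\mathrm{spec}}$. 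The containment holds because each $\injlim F$ is an iterated extension of the pieces of $\operatorname{gr} F$; conversely every such pair should arise from a unique $J$. The lattice of such pairs is the tensor product $\Idl(S) \otimes \Idl(\cat{C}^\omega)$ of bounded distributive lattices, whose Stone dual is $S \times \Spc(\cat{C}^\omega)$.

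The principal obstacle is the converse direction: proving that no radical tt-ideals appear beyond those prescribed by a generic--graded pair. The Archimedean hypothesis is the driver: for any positive $a, b \in A$ there is $n$ with $n a \ge b$, and pushing this through Day convolution allows one to sandwich the relevant invertible twist at $b$ between tensor powers of the twist at $a$ inside any radical tt-ideal. This collapses the potentially elaborate ``filtration-level'' structure into the two-level picture controlled by $\injlim$ and $\operatorname{gr}$, after which Stone duality yields the claimed product decomposition.
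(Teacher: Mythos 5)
Part (\ref{04e366edb0}) of your proposal is correct, and it is essentially the same argument the paper uses. The paper obtains (\ref{04e366edb0}) as the case $A^{\circ}=0$ of \cref{5e265d653c} via \cref{to-comp}, whose proof is driven by exactly the fact you exploit: the twists $\unit\{a\}$ are $\otimes$-invertible, so left Kan extension along $\{0\}\hookrightarrow A$ induces an isomorphism of Zariski lattices. Packaging this through the monoidal total-sum functor $\pi\colon\Fun(A,\cat{C})\to\cat{C}$ is a clean alternative presentation of the same idea, and your check that $J\mapsto\{X\mid X\langle 0\rangle\in J\}$ and $I\mapsto\pi^{-1}(I)$ are inverse bijections on radical ideals is sound.

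Part (\ref{e6bd26145c}) is where you diverge from the paper and where the argument has a genuine gap. The paper proves the much more general \cref{5e265d653c} (any partially ordered abelian group with positive cone having finite joins) through $A$-semisupports and the Archimedes semilattice $\Arch(A)$; the hard direction is \cref{lo-case}, whose inverse to the comparison morphism is constructed explicitly via the sets $\Theta(B,a)$, with \cref{ss-main}, \cref{ssspl-0}, \cref{ssspl-1}, and \cref{ss-gen} doing the preparatory work. Your reformulation — parametrizing radical ideals of $\Fun(A,\cat{C})^{\omega}$ by nested pairs $(I_{\mathrm{gen}},I_{\mathrm{spec}})$ of radical ideals of $\cat{C}^{\omega}$ extracted via $\injlim$ and $\operatorname{gr}$ — is a reasonable description of the target lattice (one can check that $\Zar(\cat{C}^{\omega})\otimes\Free(\{0<1\})$ is indeed the lattice of such nested pairs). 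But you do not establish the bijection: you do not show that a radical ideal $J$ is recovered from its pair, nor that every nested pair arises; you explicitly flag this as ``the principal obstacle'' and leave it. Your Archimedean ``sandwiching'' heuristic is precisely the content of \cref{ss-main} (which forces $a\mapsto s(\unit\{0/a\})$ to be constant on $a>0$, i.e.\ $\Arch(A)\simeq\{0<1\}$), but passing from that observation to the full isomorphism of Zariski lattices is the substantive work, and it is absent. You would also need to justify that $\operatorname{gr}$ is a symmetric monoidal exact compact-preserving functor for general Archimedean $A$ (it is standard for $\ZZ$, but needs care for $\QQ$ or $\RR$). So the framework and target are right, but the core of the proof of (\ref{e6bd26145c}) is missing.
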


\begin{figure}[h]
    \includegraphics{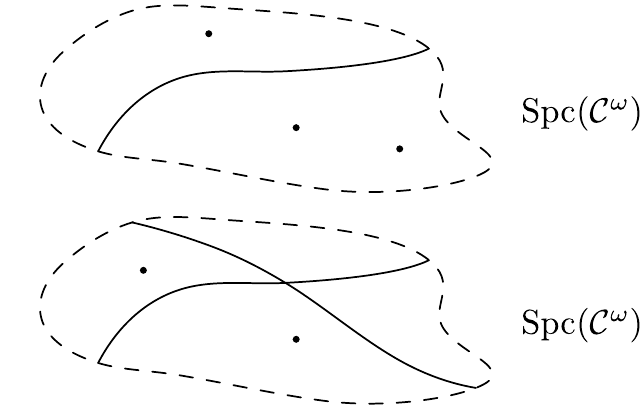}
    \caption{
        As a set, $ \Spc(\Fun(\ZZ, \cat{C})^{\omega}) $
        consists of two copies of $ \Spc(\cat{C}^{\omega}) $.
        Its closed subsets correspond to inclusions between
        two closed subsets of $ \Spc(\cat{C}^{\omega}) $.
    }
    \label{6b47fa8c88}
\end{figure}

Specializing to the case $ A = \ZZ $,
this theorem has several consequences:

\begin{example}\label{f083edff3f}
    For any quasicompact quasiseparated scheme~$ X $,
    the $ \infty $-category
    $ \QCoh(X) $, whose homotopy category is
    the derived category of discrete quasicoherent sheaves,
    becomes a big tt-$ \infty $-category
    when considering the (derived) tensor products.
    The reconstruction theorem (see \autocite[Theorem~54]{Balmer10}) implies that
    $ \Spc(\QCoh(X)^{\omega}) $ is the underlying topological space~$ X $.
    Applying our result to the case $ \cat{C}^{\otimes} = \QCoh(X)^{\otimes} $,
    we have that the Balmer spectrum of
    perfect filtered complexes on~$ X $ is the product of
    the Sierpi\'nski space
    and the underlying topological space of~$ X $.

    We note that in the special case
    where $ X $ is an affine scheme,
    this result is obtained by Gallauer in \autocite{Gallauer18}
    using a different method.
\end{example}

\begin{example}\label{71547b89c7}
    Since $ \Spc(\Cat{Sp}^{\omega}) $
    is already calculated (see \autocite[Theorem~51]{Balmer10}),
    we get the Balmer spectrum of (compact) filtered spectra.
\end{example}

\begin{example}\label{0543ef49d9}
    One advantage of the generality is
    that it can be applied iteratively.
    For example, it can be used
    when
    objects are $ \ZZ $-filtered in several directions.
    Actually, what we prove in \cref{s_21e9cca7d2}
    is so general that we can determine
    the Balmer spectrum of $ \ZZ^{\kappa} $-filtered objects
    for any cardinal~$ \kappa $.
\end{example}

We note that this theorem has a geometric interpretation:

\begin{example}\label{f15af58e5a}
    For an $ \E_{\infty} $-ring~$ R $,
    Moulinos proved in~\autocite{Moulinos} that
    there exist the following equivalences
    of symmetric monoidal $ \infty $-categories:
    \begin{align*}
        \Fun(\ZZ, \Cat{Mod}_R)^{\otimes}
        &\simeq \QCoh(\mathbb{A}^1_R/\mathbb{G}_{\mathrm{m},R})^{\otimes},
        &
        \Fun(\ZZ^{\textnormal{disc}}, \Cat{Mod}_R)^{\otimes}
        &\simeq \QCoh(\mathrm{B}\mathbb{G}_{\mathrm{m},R})^{\otimes},
    \end{align*}
    where $ \ZZ^{\textnormal{disc}} $ denotes an abelian group of integers
    without a poset structure.
    Applying our result
    to the case when
    $ A = \ZZ $,~$ \ZZ^{\textnormal{disc}} $ and
    $ \cat{C}^{\otimes} = \Cat{Mod}_R^{\otimes} $,
    we get
    the Balmer spectra of perfect complexes on
    these geometric stacks
    (but note that the determination of
    $ \Spc(\Cat{Mod}_R^{\omega}) $ for general $ R $
    is a difficult problem).
    At least
    when $ R $ is a field, our computations reflect
    a naive intuition on how these stacks look like.
\end{example}

The second result is the following:

\begin{theorem*}\label{43db5adc0d}
    Suppose that $ \cat{C}^{\otimes} $ is a big tt-$ \infty $-category
    and $ X $ is a coherent topological space
    (that is, a space which arises as the underlying
    topological space of a quasicompact quasiseparated scheme).
    Let $ \Shv_{\cat{C}}(X) $ denote the $ \infty $-category of $ \cat{C} $-valued
    sheaves on~$ X $.
    Considering the pointwise $ \E_2 $-monoidal structure on it,
    we have a canonical homeomorphism
    \begin{equation*}
        \Spc(\Shv_{\cat{C}}(X)^{\omega}) \simeq X_{\cons} \times \Spc(\cat{C}^{\omega}),
    \end{equation*}
    where $ X_{\cons} $ denotes
    the set~$ X $ endowed with the constructible topology of~$ X $.
\end{theorem*}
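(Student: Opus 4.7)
The strategy is to realize both sides of the claimed homeomorphism as the spectrum of the same distributive lattice, following the point-free approach advertised in the introduction. I would work with the intermediate identification
\[
\Shv_{\cat{C}}(X)^{\omega}\simeq \Shv(X)^{\omega}\boxtimes\cat{C}^{\omega}
\]
of small stable $\infty$-categories, so that the compact objects on the left decompose as tensor products of compact objects of $\Shv(X)$ with compact objects of~$\cat{C}$.

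I would first exhibit compact generators. Since $X$ is coherent, the objects $j_{U,!}(\mathbf{1})$ with $j_U\colon U\hookrightarrow X$ ranging over quasicompact open immersions generate $\Shv(X)^{\omega}$, so after tensoring with $\cat{C}^{\omega}$ the objects $j_{U,!}(F)$ with $F$ a compact generator of $\cat{C}$ tensor-generate $\Shv_{\cat{C}}(X)^{\omega}$. A direct computation gives the support formula
\[
\operatorname{supp}(j_{U,!}(F)) = U\times\operatorname{supp}(F),
\]
which is closed in $X_{\cons}\times\Spc(\cat{C}^{\omega})$ because $U$ is clopen in $X_{\cons}$. Moreover, the cofibre of the counit $j_{U,!} j_U^{*}(j_{X,!}F)\to j_{X,!}F$ is a compact object with support $(X\setminus U)\times\operatorname{supp}(F)$, so every rectangle of the form ``constructible subset of~$X$'' times ``Thomason closed subset of~$\Spc(\cat{C}^{\omega})$'' arises as the support of a compact object; these are precisely the basic Thomason closed subsets of the product spectral space.

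To conclude, I would invoke Balmer's classification, which identifies Thomason closed subsets of $\Spc(\Shv_{\cat{C}}(X)^{\omega})$ with radical thick tensor ideals of $\Shv_{\cat{C}}(X)^{\omega}$. It then remains to prove that every such ideal is generated by the rectangles exhibited above, which identifies this lattice with the coproduct, in distributive lattices, of the lattice of constructible subsets of~$X$ with the Thomason lattice of~$\Spc(\cat{C}^{\omega})$. Stone duality for distributive lattices---equivalently, the fact that this coproduct corresponds under Stone duality to the product of the associated spectral spaces---then yields the claimed homeomorphism.

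The main obstacle is this final surjectivity statement: ruling out ``exotic'' radical tt-ideals. Since stalks of compact sheaves are in general not compact, there is no naive way to identify tt-primes as pullbacks of primes along stalks; instead one must proceed point-free, leveraging the distributive-lattice and semilattice formalism developed earlier in the paper---and very possibly the appendix's reconstruction of the hypercomplete $\infty$-topos from a basis---to reduce the problem to a lattice-theoretic identity involving the quasicompact opens of~$X$ and the Thomason lattice of~$\Spc(\cat{C}^{\omega})$.
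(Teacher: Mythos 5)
Your proposal correctly identifies the overall strategy---pass to the Zariski lattice, exhibit compact objects supported on ``rectangles,'' and conclude via Stone duality---and it honestly flags the hard step: ruling out radical ideals not generated by rectangles. However, that flagged step is exactly where the proposal ends, and it is the substance of the theorem; the intermediate observations you record (quasicompact opens are clopen in $X_{\cons}$, supports of the objects $j_{U,!}F$ are rectangles, Balmer's classification) are all correct but elementary, and do not by themselves make progress on the generation statement for a general coherent space $X$.

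The device the paper uses to close this gap, and which is absent from your proposal, is a \emph{reduction to the finite case via continuity}. Writing $X = \Spec(L)$, the paper expresses $L$ as a filtered colimit of finite distributive lattices and shows that both sides of the claimed isomorphism commute with this colimit: the Zariski lattice functor preserves filtered colimits (\cref{continuity}), and so does the functor $L \mapsto \Shv_{\cat{C}}(\Spec(L))^{\otimes}$ as an $\E_2$-algebra in $\Cat{Pr}_{\omega}^{\st}$ (\cref{2297e48463}). This reduces everything to finite $L$, where $\Spec(L)$ is the Alexandroff space of a finite poset $P$, and the appendix (via \cref{19837320e6}) identifies $\Shv_{\cat{C}}(\Alex(P))$ with the functor category $\Fun(P,\cat{C})$. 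At that point \cref{79ff4ffbd8} on finite acyclic categories gives the answer by a completely explicit finite computation with the skyscraper objects $X(k)$---which are in fact the correct analogue of your $j_{U,!}$ construction, but localized at a single point rather than an open set, and only manageable because $P$ is finite. So: your guess that the appendix is relevant is right, but its actual role is the identification $\Shv_{\cat{C}}(\Alex(P)) \simeq \Fun(P,\cat{C})$ for finite $P$ rather than anything directly about hypercompleteness of $\Shv(X)$; and the reason the paper never has to confront ``exotic ideals'' head-on for a general coherent $X$ is the continuity reduction, which your proposal would need to discover and justify in order to be completable.

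One smaller imprecision worth noting: the phrase ``compact objects on the left decompose as tensor products of compact objects of $\Shv(X)$ with compact objects of $\cat{C}$'' overclaims---such tensor products only generate $\Shv_{\cat{C}}(X)^{\omega}$ under finite colimits and retracts. This is the intended reading, but as stated it is false and should be rephrased.
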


One big problem in tensor triangular geometry
is to compute
the Balmer spectrum of
compact objects of the stable motivic homotopy theory
$ \operatorname{SH}(X)^{\otimes} $ associated to
a quasicompact quasiseparated scheme~$ X $.
The first step would be the determination of
that of spectrum-valued sheaves
on the smooth-Nisnevich site of~$ X $.
\cref{43db5adc0d} can be considered as a toy case
of that calculation.
Nevertheless, it is an interesting fact in its own right.

In this paper,
we use distributive lattices to deal with the Balmer spectra.
More precisely,
we introduce the notion of the Zariski lattice of a tt-category,
which turns out to be just the opposite of the distributive lattice
of quasicompact open sets of the Balmer spectrum.
Thus it contains the same information as the Balmer spectrum,
but it has a more algebraic nature, which makes our computations possible.
We note that (upper) semilattices are also used in the proof.

\subsection*{Outline}

In \cref{s_94e3b0ab99},
we study $ \infty $-categorical machinery
(mainly) related to functor categories.
In \cref{s_7bfd3c8f11}, we review basic notions
in tensor triangular geometry using the language
of distributive lattices.
Its ``tensorless'' variant is also introduced there.
\cref{s_d6db2a7fad,s_21e9cca7d2} are devoted to
the proofs of \cref{43db5adc0d} and
a general version of \cref{cbaf04a35f}, respectively.
These two sections are logically independent,
but we arrange them in this way
because the former is quite simpler.
In \cref{s_basis},
we develop some technical material on $ \infty $-toposes
which we need in \cref{s_d6db2a7fad}.

\subsection*{Acknowledgments}

I would like to thank
Jacob Lurie for kindly answering several questions
on $ \infty $-categories.
I thank Martin Gallauer and Shane Kelly for commenting on
an early draft of this paper.

\section{Functor categories}\label{s_94e3b0ab99}

Concerning $ \infty $-categories,
we will follow terminology and notation used in
\autocite{HTT,HA,SAG} with minor exceptions,
which we will explicitly mention.

In this section, we study general properties
of the $ \infty $-category $ \Fun(K, \cat{C}) $
for a small $ \infty $-category~$ K $
and a presentable $ \infty $-category~$ \cat{C} $.

\subsection{Tensor products of presentable $\infty$-categories}\label{ss_2b21e083fa}

Let $ \Cat{Pr} $ denote the very large $ \infty $-category
of presentable $ \infty $-categories whose morphisms
are those functors that preserve colimits.
This is what is denoted by $ \Cat{Pr}^{\mathrm{L}} $ in~\autocite{HTT}.
Similarly, for an infinite regular cardinal $ \kappa $,
we let $ \Cat{Pr}_{\kappa} $ denote
the very large $ \infty $-category
of $ \kappa $-compactly generated $ \infty $-categories whose morphisms
are those functors that preserve colimits and $ \kappa $-compact objects.

Here we list basic properties of the symmetric monoidal structure on~$ \Cat{Pr} $
as one theorem; see \autocite[Section~4.8.1]{HA} for the proofs.

\begin{theorem}\label{tensor}
    There exists a symmetric monoidal structure on~$ \Cat{Pr} $
    which satisfies the following properties:
    \begin{enumerate}
        \item
            \label{1809666cc6}
            For $ \cat{C}, \cat{D} \in \Cat{Pr} $,
            we have that $ \cat{C} \otimes \cat{D} $ is canonically
            equivalent to the full subcategory
            of $ \Fun(\cat{C}^{\op}, \cat{D}) $ spanned
            by those functors preserving limits.
        \item
            \label{9440ef3338}
            For $ \cat{C}_1, \dotsc, \cat{C}_n, \cat{D} \in \Cat{Pr} $,
            the full subcategory of
            $ \Fun(\cat{C}_1 \otimes \dotsb \otimes \cat{C}_n, \cat{D}) $
            spanned by those functors preserving colimits
            is equivalent to that
            of $ \Fun(\cat{C}_1 \times \dotsb \times \cat{C}_n, \cat{D}) $
            spanned by those functors preserving colimits in each variable.
        \item
            \label{b1b979b395}
            The functor
            $ {\PShv} \colon \Cat{Cat}_{\infty} \to \Cat{Pr} $
            has a symmetric monoidal refinement,
            where the large $ \infty $-category $ \Cat{Cat}_{\infty} $
            of $ \infty $-categories
            is considered to be equipped with the cartesian
            symmetric monoidal structure.
        \item
            \label{ffd2ebeeb5}
            The tensor product operations
            preserve small colimits in each variable.
        \item
            \label{dee93321da}
            For an infinite regular cardinal~$ \kappa $
            and $ \cat{C}_1, \dotsc, \cat{C}_n, \cat{D} \in \Cat{Pr}_{\kappa} $,
            we have $ \cat{C}_1 \otimes \dotsb \otimes \cat{C}_n \in \Cat{Pr}_{\kappa} $.
            Moreover,
            the full subcategory of
            $ \Fun(\cat{C}_1 \otimes \dotsb \otimes \cat{C}_n, \cat{D}) $
            spanned by those functors preserving colimits and $ \kappa $-compact objects
            is canonically equivalent to that
            of $ \Fun(\cat{C}_1 \times \dotsb \times \cat{C}_n, \cat{D}) $
            spanned by those functors
            that preserve colimits in each variable and
            restrict to determine functors
            from
            $ (\cat{C}_1)^{\kappa} \times \dotsb \times (\cat{C}_n)^{\kappa} $
            to~$ \cat{D}^{\kappa} $.
    \end{enumerate}
\end{theorem}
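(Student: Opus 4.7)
The plan is to follow Lurie's construction in \autocite[Section~4.8.1]{HA}, which the author has explicitly delegated the proofs to. The foundational step is to characterize the tensor product via a universal property: $\cat{C} \otimes \cat{D}$ should be the target of a functor $\cat{C} \times \cat{D} \to \cat{C} \otimes \cat{D}$ that is universal among those preserving colimits separately in each variable. Existence can be established by realizing presentable $\infty$-categories as accessible localizations of presheaf categories and then localizing $\PShv(\cat{C}^{\op} \times \cat{D}^{\op})$ appropriately; item~\cref{1809666cc6} is then essentially immediate from the universal property, since ``limit-preserving functor $\cat{C}^{\op} \to \cat{D}$'' translates into ``separately colimit-preserving functor out of $\cat{C} \times \cat{D}$'' after identifying $\cat{C}$ with a reflective subcategory of $\PShv(\cat{C}^{\op})$.

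Item~\cref{9440ef3338} should follow by iterating this universal property; one observes that the class of separately colimit-preserving multilinear functors is closed under the natural composition laws, which permits the bifunctor $\otimes$ to be promoted to a symmetric monoidal structure. The technical heart of this promotion is carried out in HA using the framework of $\infty$-operads, and I expect this operadic bookkeeping --- rather than any single cleverly-chosen formula --- to be the main obstacle.

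For \cref{b1b979b395}, the key calculation is the canonical equivalence
\begin{equation*}
    \PShv(K) \otimes \PShv(L) \simeq \PShv(K \times L),
\end{equation*}
which is forced by the universal property together with the defining feature of $\PShv$ as the free cocompletion: a colimit-preserving functor out of $\PShv(K)$ is the same as an arbitrary functor out of $K$, and similarly for the bilinear case. The symmetric monoidal refinement of $\PShv$ is then determined by this equivalence on generating objects. Item~\cref{ffd2ebeeb5} is formal from the fact that the tensor product admits right adjoints in each variable, namely the internal Hom given by the colimit-preserving functor category described in~\cref{1809666cc6}.

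Finally, for \cref{dee93321da}, I would invoke the equivalence between $\Cat{Pr}_{\kappa}$ and the $\infty$-category of small $\infty$-categories admitting $\kappa$-small colimits (with $\kappa$-small-colimit-preserving functors as morphisms), implemented by $\cat{C} \mapsto \cat{C}^{\kappa}$ with inverse $\Ind_{\kappa}$. Under this equivalence, the tensor product on $\Cat{Pr}_{\kappa}$ is computed by first tensoring the corresponding small $\infty$-categories in the analogous symmetric monoidal structure on the latter and then applying $\Ind_{\kappa}$. The desired universal property then reduces to the analogous, simpler statement in the small setting, using that a colimit-preserving functor from a $\kappa$-compactly generated $\infty$-category is determined by and can be freely specified on its $\kappa$-compact objects.
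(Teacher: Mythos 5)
Your proposal is correct and takes essentially the same route as the paper, which simply cites \autocite[Section~4.8.1]{HA} for all of these facts; what you have written is a faithful sketch of the relevant arguments there (the universal property of the Lurie tensor product, its realization via localizations of presheaf categories, the identity $\PShv(K)\otimes\PShv(L)\simeq\PShv(K\times L)$, the internal Hom, and the equivalence $\Cat{Pr}_{\kappa}\simeq$ small idempotent-complete $\kappa$-cocomplete $\infty$-categories for~\cref{dee93321da}). The only minor imprecision is that in the last step one should say \emph{idempotent-complete} small $\infty$-categories admitting $\kappa$-small colimits, which matters when $\kappa=\omega$; with that caveat the reduction to the small setting goes through as you describe.
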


The useful consequence for us is the following:

\begin{corollary}\label{188b761524}
    For a presentable $ \infty $-category~$ \cat{C} $
    and a small $ \infty $-category~$ K $,
    we have a canonical equivalence
    $ \Fun(K, \Cat{S}) \otimes \cat{C} \simeq \Fun(K, \cat{C}) $.
\end{corollary}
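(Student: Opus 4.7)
The plan is to apply \cref{tensor}\cref{1809666cc6} to give a concrete description of the tensor product, and then to invoke the standard characterization of representable presheaves on a presentable $\infty$-category.

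By symmetry of~$\otimes$ and \cref{tensor}\cref{1809666cc6}, I have $\Fun(K, \Cat{S}) \otimes \cat{C} \simeq \cat{C} \otimes \Fun(K, \Cat{S})$ canonically equivalent to the full subcategory of $\Fun(\cat{C}^{\op}, \Fun(K, \Cat{S}))$ spanned by the functors that preserve small limits. Using the exchange of variables
\begin{equation*}
    \Fun(\cat{C}^{\op}, \Fun(K, \Cat{S})) \simeq \Fun(K, \Fun(\cat{C}^{\op}, \Cat{S})) = \Fun(K, \PShv(\cat{C})),
\end{equation*}
and observing that limits in $\Fun(K, \Cat{S})$ are computed pointwise, this subcategory is identified with the full subcategory of $\Fun(K, \PShv(\cat{C}))$ spanned by those $F \colon K \to \PShv(\cat{C})$ for which each value $F(k) \colon \cat{C}^{\op} \to \Cat{S}$ preserves small limits.

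The next step is to invoke \autocite[Proposition~5.5.2.2]{HTT}: for a presentable $\infty$-category~$\cat{C}$, a presheaf $\cat{C}^{\op} \to \Cat{S}$ preserves small limits if and only if it is representable. Hence the Yoneda embedding $\cat{C} \hookrightarrow \PShv(\cat{C})$ is an equivalence onto the full subcategory of limit-preserving presheaves, and post-composition with~$F$ identifies the above subcategory with $\Fun(K, \cat{C})$, giving the claimed equivalence.

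The only nontrivial ingredient is the cited result from~\autocite{HTT}, which upgrades the identification of limit-preserving presheaves with representables to an equivalence of $\infty$-categories; once this is in hand, the rest of the argument amounts to unwinding definitions and tracking that the two exchange-of-variables isomorphisms and the Yoneda equivalence are natural, so that the composite equivalence is indeed canonical.
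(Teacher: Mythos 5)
Your proof is correct, but it takes a genuinely different route from the paper's. The paper keeps $\Fun(K,\Cat{S})$ in the contravariant slot of \cref{tensor}\cref{1809666cc6}, obtaining the full subcategory of $\Fun(\Fun(K,\Cat{S})^{\op},\cat{C})$ spanned by limit-preserving functors, and then immediately invokes \cite[Theorem~5.1.5.6]{HTT} (the universal property of the presheaf category as the free cocompletion, applied on opposites) to identify this with $\Fun(K,\cat{C})$ in one step. You instead use symmetry of the tensor product to put $\cat{C}$ in the contravariant slot, pass through $\Fun(K,\PShv(\cat{C}))$ via an exchange of variables, and then use the fact that limits in a functor category are detected pointwise together with \cite[Proposition~5.5.2.2]{HTT} (limit-preserving presheaves on a presentable $\infty$-category are exactly the representable ones) to land in $\Fun(K,\cat{C})$. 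Both arguments are sound and of comparable length; the paper's version is somewhat more direct (one application of a single universal property), while yours is instructive because it uses a distinctive feature of presentability (representability of limit-preserving presheaves) rather than the free-cocompletion property of presheaves on the small side, and it makes the pointwise nature of the statement explicit.
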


\begin{proof}
    According to~\cref{1809666cc6} of \cref{tensor},
    the left hand side can be regarded as
    the full subcategory
    of $ \Fun(\Fun(K, \Cat{S})^{\op}, \cat{C}) $
    spanned by those functors preserving limits.
    Hence the equivalence follows from \autocite[Theorem~5.1.5.6]{HTT}.
\end{proof}

We have the following result from this description,
although it can be proven more concretely:

\begin{corollary}\label{bd6b741a6f}
    Let $ \kappa $ be an infinite regular cardinal.
    If $ \cat{C} $ is a $ \kappa $-compactly generated $ \infty $-category,
    so is $ \Fun(K, \cat{C}) $
    for any small $ \infty $-category~$ K $.
\end{corollary}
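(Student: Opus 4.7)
The plan is to reduce the statement to the closure of $\Cat{Pr}_{\kappa}$ under tensor products recorded in \cref{tensor}. By \cref{188b761524}, there is a canonical equivalence $\Fun(K, \cat{C}) \simeq \Fun(K, \Cat{S}) \otimes \cat{C}$, so it suffices to check that the right-hand side belongs to $\Cat{Pr}_{\kappa}$.

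For this, I would first verify that $\Fun(K, \Cat{S}) = \PShv(K^{\op})$ itself lies in $\Cat{Pr}_{\kappa}$. It is compactly generated by the (essentially small) image of the Yoneda embedding $K^{\op} \hookrightarrow \Fun(K, \Cat{S})$, whose objects corepresent the evaluation functors $F \mapsto F(k)$ and are therefore compact. Since compact objects are in particular $\kappa$-compact and the representables already generate under arbitrary colimits, $\Fun(K, \Cat{S})$ is $\kappa$-compactly generated for every infinite regular cardinal~$\kappa$. Next, applying part~\cref{dee93321da} of \cref{tensor} to the pair $(\Fun(K, \Cat{S}), \cat{C})$ delivers $\Fun(K, \Cat{S}) \otimes \cat{C} \in \Cat{Pr}_{\kappa}$; combining this with \cref{188b761524} yields the claim.

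There is no substantive obstacle here, since the corollary is essentially a formal consequence of the results already cited. The only slightly delicate point is the passage from compact generation of $\Fun(K, \Cat{S})$ to $\kappa$-compact generation, which is a standard cardinal-monotonicity property of presentable $\infty$-categories. The ``more concrete'' alternative alluded to in the statement would presumably proceed by exhibiting an explicit set of $\kappa$-compact generators of $\Fun(K, \cat{C})$ built from the corepresentable functors on~$K$ together with a chosen set of $\kappa$-compact generators of~$\cat{C}$, without ever invoking the tensor product of presentable $\infty$-categories.
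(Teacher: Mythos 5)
Your proof is correct and follows the same route the paper intends: the paper gives no explicit proof for this corollary, but the preceding phrase ``from this description'' refers precisely to \cref{188b761524} combined with part~\cref{dee93321da} of \cref{tensor}, which is exactly how you argue. The only detail you supply that the paper leaves tacit is the (standard) observation that $\Fun(K,\Cat{S}) = \PShv(K^{\op})$ lies in $\Cat{Pr}_{\kappa}$ for every infinite regular $\kappa$, and your justification of that point is sound.
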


\subsection{Compact objects in a functor category}\label{ss_6fb1913383}

In this subsection, we fix an infinite regular cardinal~$ \kappa $.

For a $ \kappa $-compactly generated $ \infty $-category $ \cat{C} $,
according to \cref{bd6b741a6f},
the $ \infty $-category $ \Fun(K, \cat{C}) $ is
also
$ \kappa $-compactly generated for any small $ \infty $-category $ K $.
The aim of this subsection is
to determine $ \kappa $-compact objects
of $ \Fun(K, \cat{C}) $ under some assumptions on~$ K $.

\begin{definition}\label{c2d2794786}
    Let $ K $ be a small $ \infty $-category.
    We say that $ K $ is \emph{$ \kappa $-small} if
    there exist a simplicial set $ K' $ with $ < \kappa $ nondegenerate simplices
    and a Joyal equivalence $ K' \to K $.
    We say that $ K $ is \emph{locally $ \kappa $-compact} if
    the mapping space functor $ {\Map} \colon K^{\op} \times K \to \Cat{S} $ factors
    through the full subcategory $ \Cat{S}^{\kappa} $ spanned by $ \kappa $-compact spaces.
\end{definition}

\begin{remark}\label{fbcd64e8ad}
    When $ \kappa $ is uncountable,
    the notion of $ \kappa $-smallness introduced here
    coincides with that of essential $ \kappa $-smallness
    given in \autocite[Definition~5.4.1.3]{HTT}.
    According to \autocite[Proposition~5.4.1.2]{HTT},
    every $ \kappa $-small category is locally $ \kappa $-compact.
    However in the case $ \kappa = \omega $,
    the analogous result does not hold:
    For example, the $ 1 $-sphere~$ S^1 $,
    regarded as an $ \infty $-category,
    is $ \omega $-finite but not locally $ \omega $-compact;
    see also \cref{b80336b02b}.
\end{remark}

\begin{lemma}\label{8fe6e0f750}
    Let
    $ \cat{C} $ be a $ \kappa $-compactly generated $ \infty $-category
    and $ K $ a $ \kappa $-small $ \infty $-category.
    Then every functor $ K \to \cat{C} $
    that factors through~$ \cat{C}^{\kappa} $
    is a $ \kappa $-compact object of $ \Fun(K, \cat{C}) $.
\end{lemma}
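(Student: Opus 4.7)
The plan is as follows. For each $k \in K$, the evaluation functor $\operatorname{ev}_k \colon \Fun(K, \cat{C}) \to \cat{C}$ preserves all colimits (they are computed pointwise), so its left adjoint $i_{k!}$ (left Kan extension along $\{k\} \hookrightarrow K$) preserves $\kappa$-compact objects. Hence each $i_{k!}(X(k))$ lies in $\Fun(K, \cat{C})^{\kappa}$, and because $\kappa$-small colimits of $\kappa$-compact objects are $\kappa$-compact, the proof reduces to exhibiting $X$ as a $\kappa$-small colimit of such $i_{k!}(X(k))$'s.

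The most natural way to do so is the $\infty$-categorical coend $X \simeq \int^{k \in K} i_{k!}(X(k))$, a colimit indexed by the twisted arrow category $\operatorname{Tw}(K)$. When $\kappa$ is uncountable this suffices immediately: since $K$ is $\kappa$-small, so is $\operatorname{Tw}(K)$, whose $n$-simplices are the $(2n+1)$-simplices of $K$. The case $\kappa = \omega$ is the real obstacle, because, as the preceding remark cautions, $K$ may be $\omega$-small without being locally $\omega$-compact (e.g.~$S^1$), and $\operatorname{Tw}(K)$ then has no obvious $\omega$-small model.

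For the $\omega$ case, I would replace the coend argument by a cellular induction on a finite simplicial model of $K$, organized lexicographically by top dimension $d$ and number of top-dimensional cells. At each attachment $K = K' \sqcup_{\partial \Delta^d} \Delta^d$ we have the pullback description $\Fun(K, \cat{C}) \simeq \Fun(K', \cat{C}) \times_{\Fun(\partial \Delta^d, \cat{C})} \Fun(\Delta^d, \cat{C})$, and mapping spaces in such a pullback are pullbacks of mapping spaces. Since finite limits commute with filtered colimits of spaces, the compactness of $X$ reduces to the compactness of $X|_{K'}$ (by the inner induction), $X|_{\partial \Delta^d}$ (by the outer induction, since $\dim \partial \Delta^d = d-1$), and $X|_{\Delta^d}$ (trivial because $\Delta^d \simeq \pt$). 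The rest of the argument is routine; the decisive part is navigating the $\omega$ case, where $\operatorname{Tw}(K)$ is an unreliable tool and the cellular pullback description is what rescues the argument.
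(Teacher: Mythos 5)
The paper does not supply its own argument here: it simply cites \autocite[Proposition~5.3.4.13]{HTT}, which is essentially this statement. So what you have written is an attempted reproof rather than a genuinely different route from the paper's.

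Your coend argument for uncountable $\kappa$ is sound: the density formula $X \simeq \int^{k} i_{k!}(X(k))$ is a colimit over (a model of) $\operatorname{Tw}(K)$, which is $\kappa$-small when $\kappa$ is uncountable, and $\kappa$-compact objects are stable under $\kappa$-small colimits. Your diagnosis that the case $\kappa = \omega$ is the real difficulty, and that $\operatorname{Tw}(K)$ then has no $\omega$-small model, is also correct.

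However, the cellular induction for $\kappa = \omega$ has a genuine gap at the step ``$X|_{\Delta^d}$ (trivial because $\Delta^d \simeq \pt$).'' In the relevant model structure (Joyal, since $\Fun(-, \cat{C})$ is invariant only under categorical equivalence), $\Delta^d$ is \emph{not} equivalent to a point: it is the nerve of the linear poset $[d] = \{0 < \dotsb < d\}$, which has $d+1$ distinct objects, and $\Fun(\Delta^d, \cat{C})$ is the $\infty$-category of length-$d$ composable chains in $\cat{C}$, not $\cat{C}$ itself. The assertion $\Delta^d \simeq \pt$ is only true up to Kan--Quillen weak equivalence, which is the wrong notion here. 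This matters because $\Delta^d$ is exactly the base case of your inner induction (dimension $d$, one top-dimensional cell), so it cannot be deferred. The fix is to treat $\Delta^d$ by observing that the spine inclusion $\operatorname{Sp}[d] \hookrightarrow \Delta^d$ is inner anodyne, hence $\Fun(\Delta^d, \cat{C}) \simeq \Fun(\operatorname{Sp}[d], \cat{C})$ with $\operatorname{Sp}[d]$ one-dimensional, and then to handle $\Delta^1$ directly by noting that the mapping space $\Map_{\Fun(\Delta^1,\cat{C})}\bigl((C_0 \to C_1), (D_0 \to D_1)\bigr)$ is the pullback $\Map(C_0,D_0) \times_{\Map(C_0,D_1)} \Map(C_1,D_1)$, a finite limit that commutes with filtered colimits. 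With that patch, and with $\Delta^0$ as the other base case, the lexicographic induction you describe goes through.
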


\begin{proof}
    This is a corollary of \autocite[Proposition~5.3.4.13]{HTT}.
\end{proof}

\begin{lemma}\label{3a0098a7a8}
    Let
    $ \cat{C} $ be a $ \kappa $-compactly generated $ \infty $-category
    and $ K $ a locally $ \kappa $-compact $ \infty $-category.
    Then every $ \kappa $-compact object $ K \to \cat{C} $ of $ \Fun(K, \cat{C}) $
    factors through~$ \cat{C}^{\kappa} $.
\end{lemma}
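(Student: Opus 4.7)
The plan is to prove the stronger statement that, for each $k \in K$, the evaluation functor $\mathrm{ev}_k \colon \Fun(K, \cat{C}) \to \cat{C}$ preserves $\kappa$-compact objects; since $F(k) = \mathrm{ev}_k(F)$, this immediately implies the claim. I would achieve this by exhibiting a right adjoint $R_k$ to $\mathrm{ev}_k$ (namely right Kan extension along $\{k\} \hookrightarrow K$) and then showing that $R_k$ preserves $\kappa$-filtered colimits---passing to left adjoints then gives preservation of $\kappa$-compact objects on the other side.

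First I would identify $R_k$ concretely via the cotensor: $R_k(X)(k') = X^{\Map_K(k, k')}$. Because colimits in $\Fun(K, \cat{C})$ are computed pointwise, the question of whether $R_k$ preserves $\kappa$-filtered colimits reduces to whether $(-)^S \colon \cat{C} \to \cat{C}$ does for each $S = \Map_K(k, k')$. Here the local $\kappa$-compactness hypothesis on $K$ enters crucially: it guarantees that each such $S$ lies in $\Cat{S}^{\kappa}$.

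The hard part is therefore to show that cotensor by a $\kappa$-compact space commutes with $\kappa$-filtered colimits. I would prove this by testing against $\kappa$-compact generators $c \in \cat{C}^{\kappa}$: since
\[
\Map_{\cat{C}}(c, X^S) \simeq \Map_{\Cat{S}}(S, \Map_{\cat{C}}(c, X)),
\]
and both $\Map_{\cat{C}}(c, -)$ (as $c$ is $\kappa$-compact in $\cat{C}$) and $\Map_{\Cat{S}}(S, -)$ (as $S$ is $\kappa$-compact in $\Cat{S}$) commute with $\kappa$-filtered colimits, so does their composite $\Map_{\cat{C}}(c, (-)^S)$. Since the $\kappa$-compact objects generate $\cat{C}$ under colimits, this forces the canonical comparison map $\varinjlim X_\alpha^S \to (\varinjlim X_\alpha)^S$ to be an equivalence for any $\kappa$-filtered diagram $\{X_\alpha\}$. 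In view of \cref{fbcd64e8ad}, this is also precisely the step at which local $\kappa$-compactness cannot be relaxed to $\kappa$-smallness when $\kappa = \omega$, which explains why the two notions had to be distinguished.
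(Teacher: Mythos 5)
Your proof takes essentially the same route as the paper's: pass to the right Kan extension along $\{k\}\hookrightarrow K$, identify it via the cotensor, and use local $\kappa$-compactness to see that it preserves $\kappa$-filtered colimits, which by \autocite[Proposition~5.5.7.2]{HTT} is equivalent to $\mathrm{ev}_k$ preserving $\kappa$-compact objects; you merely spell out the last step (testing $\Map_{\cat{C}}(c, X^S) \simeq \Map_{\Cat{S}}(S, \Map_{\cat{C}}(c, X))$ against $\kappa$-compact generators $c$), which the paper asserts without elaboration. One small slip: the right Kan extension formula should read $R_k(X)(k') \simeq X^{\Map_K(k', k)}$ (maps \emph{into} $k$, not out of it), but since local $\kappa$-compactness bounds $\Map_K(-,-)$ in both variables, this does not affect the argument.
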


\begin{proof}
    Let $ k $ be an object of~$ K $.
    The inclusion $ i \colon {*} \hookrightarrow K $ corresponding to~$ k $
    induces a functor $ i^* \colon \Fun(K, \mathcal{C}) \to \mathcal{C} $
    by composition.
    Since $ F(k) \simeq i^*F $ holds for every functor $ F \colon K \to \mathcal{C} $,
    it suffices to show that the functor~$ i^* $ preserves $ \kappa $-compact objects.
    By \autocite[Proposition~5.5.7.2]{HTT},
    this is equivalent to the assertion
    that its right adjoint~$ i_* $ preserves $ \kappa $-filtered colimits.
    The functor~$ i_* $ can be concretely described
    as the assignment $ C \mapsto \bigl(l \mapsto C^{\Map_K(l,k)}\bigr) $
    using the cotensor structure on~$ \mathcal{C} $.
    Since $ \cat{C} $
    is $ \kappa $-compactly generated
    and $ \Map_K(l,k) $ is $ \kappa $-compact for every object
    $ l \in K $,
    we can see that $ i_* $ preserves $ \kappa $-filtered colimits.
\end{proof}

Combining these two lemmas,
we get the following result:

\begin{proposition}\label{1f1e4383ed}
    Let
    $ \cat{C} $ be a $ \kappa $-compactly generated $ \infty $-category
    and $ K $ a $ \kappa $-small locally $ \kappa $-compact $ \infty $-category.
    Then an object $ F \in \Fun(K, \cat{C}) $
    is $ \kappa $-compact if and only if
    it takes values in $ \kappa $-compact objects.
\end{proposition}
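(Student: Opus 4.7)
The plan is to simply combine the two preceding lemmas, since the biconditional splits into exactly the two implications they establish.

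For the ``if'' direction, I would apply \cref{8fe6e0f750}: the hypothesis that $K$ is $\kappa$-small and the assumption that $F \colon K \to \cat{C}$ factors through $\cat{C}^{\kappa}$ are precisely what that lemma requires in order to conclude that $F$ is a $\kappa$-compact object of $\Fun(K, \cat{C})$. Neither the local $\kappa$-compactness of $K$ nor any further property of $\cat{C}$ is used in this direction.

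For the ``only if'' direction, I would apply \cref{3a0098a7a8}: the hypothesis that $K$ is locally $\kappa$-compact together with the assumption that $F$ is a $\kappa$-compact object of $\Fun(K, \cat{C})$ give, directly from that lemma, that $F$ takes values in $\cat{C}^{\kappa}$. The $\kappa$-smallness of $K$ plays no role here.

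Since both implications are now immediate, there is no real obstacle; the proof is genuinely a one-line citation. If I wanted to be slightly more explicit, I would point out that the two hypotheses on $K$ in the proposition are being used asymmetrically --- $\kappa$-smallness for one direction, local $\kappa$-compactness for the other --- and note (as \cref{fbcd64e8ad} already does) that for uncountable $\kappa$ these two conditions are related, whereas for $\kappa = \omega$ they are genuinely independent, which is why both must be assumed in the uniform statement of the proposition.
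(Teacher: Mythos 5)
Your proposal is correct and matches the paper exactly: the paper itself introduces the proposition with ``Combining these two lemmas, we get the following result,'' so the proof is nothing more than the conjunction of \cref{8fe6e0f750} (for the ``if'' direction, using $\kappa$-smallness) and \cref{3a0098a7a8} (for the ``only if'' direction, using local $\kappa$-compactness). Your added remark about the asymmetric use of the two hypotheses and the $\kappa=\omega$ case is a faithful reading of \cref{fbcd64e8ad}.
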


\begin{remark}\label{b80336b02b}
    \Cref{1f1e4383ed} does not hold without the local condition:
    Take $ K = S^1 $ and
    consider the object $ X $ of $ \Fun(S^1, \Cat{S}) $ corresponding to
    the universal covering $ {*} \to S^1 $ by the Grothendieck construction.
    By \autocite[Lemma~5.1.6.7]{HTT}
    the object~$ X $ is compact,
    but $ X({*}) \simeq \ZZ $ is not compact.
\end{remark}

\begin{remark}\label{8f4cd6fe0e}
    In the case $ \kappa = \omega $,
    the assumption on~$ K $ in the statement of \Cref{1f1e4383ed} is
    restrictive.
    For example,
    if $ K $ is also assumed to be equivalent to a space,
    $ K $ must be equivalent to a finite set:
    When $ K $ is simply connected,
    this can be deduced by considering the homological Serre
    spectral sequence associated to
    the fiber sequence $ \Omega K \to {*} \to K $.
    The general case follows by taking the universal cover of
    each connected component of~$ K $
    and using the fact that the classifying space
    of a nontrivial finite group is not finite.
\end{remark}

Later in this paper,
we consider a slightly more general situation
than that of \cref{1f1e4383ed}.
At that moment, the following result is useful:

\begin{corollary}\label{02eb77190d}
    Let $ K $ be a locally $ \kappa $-compact
    $ \infty $-category
    and $ \cat{C} $ a $ \kappa $-compactly generated $ \infty $-category.
    Suppose that
    there is a $ \kappa $-directed family $ (K_j)_{j \in J} $
    of full subcategories of~$ K $
    such that $ K_j $ is $ \kappa $-small for $ j \in J $.
    We let $ (i_j)_! $ denote the left Kan extension functor
    along the inclusion $ i_j \colon K_j \hookrightarrow K $.
    Then we have an equality
    \begin{equation*}
        \Fun(K, \cat{C})^{\kappa}
        = \bigcup_{j \in J} (i_j)_! (\Fun(K_j, \cat{C})^{\kappa})
    \end{equation*}
    of full subcategories of $ \Fun(K, \cat{C}) $.
\end{corollary}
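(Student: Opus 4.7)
The approach is to prove each inclusion separately. The inclusion $(\supseteq)$ is purely formal: colimits in $\Fun(K, \cat{C})$ and $\Fun(K_j, \cat{C})$ are computed pointwise, so the restriction functor $(i_j)^{*}$ preserves all colimits, and in particular $\kappa$-filtered ones. By \autocite[Proposition~5.5.7.2]{HTT}, its left adjoint $(i_j)_{!}$ carries $\kappa$-compact objects to $\kappa$-compact objects.

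For the nontrivial inclusion $(\subseteq)$, fix $F \in \Fun(K, \cat{C})^{\kappa}$. The heart of the argument is the colimit formula $F \simeq \injlim_{j \in J} (i_j)_{!}(i_j)^{*} F$, where the transition maps are induced by the counits of $(\iota_{jj'})_{!} \dashv (\iota_{jj'})^{*}$ for the inclusions $\iota_{jj'} \colon K_j \hookrightarrow K_{j'}$. I would check this pointwise at each $k \in K$: since the $K_j$ must exhaust $K$ for the statement to hold, the sub-poset $\{j \in J : k \in K_j\}$ is cofinal in $J$, and on this cofinal subsystem each term evaluates to $F(k)$ (as $i_j$ is fully faithful, the counit $(i_j)_{!}(i_j)^{*} F \to F$ is an equivalence at objects of $K_j$) with identity transition maps, so the colimit is $F(k)$. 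Granting this, $\kappa$-compactness of $F$ together with $\kappa$-directedness of $J$ factors the identity on $F$ through some $(i_{j_0})_{!}(i_{j_0})^{*} F$, exhibiting $F$ as a retract of $(i_{j_0})_{!} G_0$ where $G_0 := (i_{j_0})^{*} F$. Now \cref{3a0098a7a8} ensures that $F$ takes $\kappa$-compact values, and \cref{1f1e4383ed} applied to $K_{j_0}$ (which is $\kappa$-small and inherits local $\kappa$-compactness from $K$ as a full subcategory) shows that $G_0 \in \Fun(K_{j_0}, \cat{C})^{\kappa}$.

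It remains to promote ``retract'' to ``equivalence in the image of $(i_{j_0})_{!}$''. Since $i_{j_0}$ is fully faithful, so is $(i_{j_0})_{!}$, and thus the idempotent $e$ on $(i_{j_0})_{!} G_0$ whose splitting is $F$ is the image of a unique idempotent $e'$ on $G_0$ in $\Fun(K_{j_0}, \cat{C})$. Splitting $e'$ there yields a retract $G \in \Fun(K_{j_0}, \cat{C})$ of $G_0$, still $\kappa$-compact; applying the colimit-preserving functor $(i_{j_0})_{!}$ produces a splitting of $e$, which must be equivalent to $F$ by uniqueness of idempotent splittings. Hence $F \simeq (i_{j_0})_{!} G$ with $G \in \Fun(K_{j_0}, \cat{C})^{\kappa}$, completing the proof. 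The main obstacle I anticipate is the pointwise verification of the colimit formula and the bookkeeping around transition maps; the factorization-through-a-stage and the idempotent-splitting steps are essentially formal once that formula is in hand.
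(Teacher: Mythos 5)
Your proof is correct and follows the same strategy as the paper's: write a $\kappa$-compact $F$ as the $\kappa$-filtered colimit of $(i_j)_!(F\rvert_{K_j})$, factor the identity through one stage, and use full faithfulness of $(i_{j_0})_!$ together with \cref{3a0098a7a8,1f1e4383ed} to conclude. You supply two details the paper states without elaboration — the pointwise verification of the colimit formula (which, as you rightly note, requires the tacit hypothesis that the $K_j$ cover $K$) and the idempotent-splitting argument behind ``the essential image of $(i_j)_!$ is closed under retracts''; for the latter, note your $G$ is in fact equivalent to $G_0 = F\rvert_{K_{j_0}}$ by applying $(i_{j_0})^*$ and full faithfulness, so the extra retract step is harmless but unnecessary.
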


\begin{proof}
    Since the right adjoint of
    the left Kan extension functor preserves colimits,
    we obtain one inclusion by applying \autocite[Proposition~5.5.7.2]{HTT}.

    Conversely,
    let $ F \in \Fun(K, \cat{C}) $ be a $ \kappa $-compact object.
    By assumption, $ F $ is the colimit of
    the $ \kappa $-filtered diagram
    $ j \mapsto (i_j)_!(F \rvert_{K_j}) $. Hence we can take $ j \in J $
    such that $ F $ is a retract of $ (i_j)_!(F \rvert_{K_j}) $.
    Since the essential image of $ (i_j)_! $ is closed under retracts,
    $ F $ is in fact equivalent to $ (i_j)_!(F \rvert_{K_j}) $.
    From \cref{3a0098a7a8,1f1e4383ed}, we have that
    $ F \rvert_{K_j} $ is $ \kappa $-compact,
    which completes the proof of the other inclusion.
\end{proof}

\subsection{Recollement}\label{ss_c6b04c731b}

We refer the reader to \autocite[Section~A.8]{HA}
for the theory of recollement for $ \infty $-categories.

\begin{definition}\label{7c03bb65b4}
    Suppose that $ \cat{C} $ is an $ \infty $-category
    and $ i \colon \cat{C}_0 \to \cat{C} $
    and $ j \colon \cat{C}_1 \to \cat{C} $ are fully faithful functors.
    We say that $ \cat{C} $ is a \emph{recollement} 
    of~$ i $ and~$ j $ if
    $ \cat{C} $ is a recollement of
    the essential images of~$ i $ and~$ j $
    in the sense of \autocite[Definition~A.8.1]{HA}.
\end{definition}

There are many ways to write
a functor category as a recollement due to the following observation:

\begin{proposition}\label{883b124c64}
    Let $ \cat{C} $ be a presentable $ \infty $-category
    and $ K_1 \subset K $ a full inclusion of $ \infty $-categories.
    Suppose that $ K_1 $ is a cosieve on~$ K $;
    that is, $ k \in K_1 $ implies $ l \in K_1 $
    if there exists a morphism $ k \to l $ in~$ K $.
    Let $ K_0 $ denote the full subcategory of~$ K $
    spanned by objects not in~$ K_1 $.
    Let $ i_* \colon \Fun(K_0, \cat{C}) \hookrightarrow \Fun(K, \cat{C}) $
    and
    $ j_* \colon \Fun(K_1, \cat{C}) \hookrightarrow \Fun(K, \cat{C}) $
    denote the functors defined by right Kan extensions.
    Then $ \Fun(K, \cat{C}) $ is a recollement of~$ i_* $ and~$ j_* $.
\end{proposition}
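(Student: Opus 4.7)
The plan is to verify the four conditions of \autocite[Definition~A.8.1]{HA} directly. First I would identify the putative left adjoints: the composition of right Kan extension with restriction is the identity on an extension, so $i_*$ and $j_*$ have as left adjoints the restriction functors $i^* \colon F \mapsto F\rvert_{K_0}$ and $j^* \colon F \mapsto F\rvert_{K_1}$. These are manifestly left exact, since restriction preserves limits pointwise. The real content is therefore to compute the right Kan extensions pointwise and exploit the cosieve hypothesis.

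The key computation is as follows: for $F \in \Fun(K_0, \cat{C})$ and $k \in K$, the value $(i_*F)(k)$ is a limit of $F$ over the slice $K_0 \times_K K_{k/}$. When $k \in K_0$, this slice has an initial object $k \xrightarrow{\mathrm{id}} k$, giving $(i_*F)(k) \simeq F(k)$, hence $i^*i_* \simeq \mathrm{id}$ and $i_*$ is fully faithful. When $k \in K_1$, the cosieve condition forces every morphism out of $k$ to remain in $K_1$, so the slice is empty and $(i_*F)(k)$ is the terminal object of~$\cat{C}$. A parallel (and easier) pointwise description applies to $j_*$, using that $k \xrightarrow{\mathrm{id}} k$ is initial in $(K_1)_{k/}$ whenever $k \in K_1$, so that $j^*j_* \simeq \mathrm{id}$ and $j_*$ is fully faithful as well.

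With this in hand, the remaining two axioms are essentially free. Axiom~(c), that $j^* \circ i_*$ is constant at the terminal object of $\Fun(K_1,\cat{C})$, is exactly the second half of the key computation above, because a functor $K_1 \to \cat{C}$ is terminal precisely when it is pointwise terminal. Axiom~(d), joint conservativity of $i^*$ and $j^*$, reduces to the pointwise nature of equivalences in $\Fun(K, \cat{C})$: since every object of $K$ lies in $K_0 \sqcup K_1$, a natural transformation $\alpha$ whose restrictions to $K_0$ and $K_1$ are equivalences is already pointwise an equivalence.

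I do not anticipate a serious obstacle; the entire proof is driven by a single observation, namely that a cosieve makes the relevant slice category empty and hence trivializes one half of the right Kan extension. The only thing to be careful about is orientations: one must check that it is right Kan extension (not left) that matches the cosieve direction, so that the empty-slice phenomenon occurs on the complement rather than in the middle.
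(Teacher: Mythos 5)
Your proposal is correct and takes essentially the same route as the paper. The paper's proof consists of a single sentence declaring that the only nontrivial axiom is (c) — that $j^*i_*$ lands in the terminal object — and verifies it via exactly your key observation: the cosieve condition forces $(K_0)_{k/}$ to be empty for $k \in K_1$, so the right Kan extension formula yields the terminal object. You have simply made explicit the parts the paper treats as routine (full faithfulness of $i_*$ and $j_*$, left exactness of the restrictions, joint conservativity), all of which are indeed standard for Kan extensions along a fully faithful functor and for pointwise-computed functor categories.
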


\begin{proof}
    The only nontrivial point is
    to verify
    that $ j^*i_* $ sends every object to the final object,
    where we write $ j^* $ for the functor
    given by restriction along the inclusion $ K_1 \subset K $.
    For $ k \in K_1 $,
    the cosieve condition implies $ (K_0)_{k/} = \emptyset $.
    Hence for any $ F \in \Fun(K_0, \cat{C}) $ and any $ k \in K_1 $, we have
    $ (j^*i_*F)(k) = (i_*F)(k) \simeq \projlim_{l \in (K_0)_{k/}} F(l) \simeq {*} $,
    which completes the proof.
\end{proof}

We state a lemma on recollements
in the stable setting.

\begin{lemma}\label{recol-spl}
    Let $ \cat{C} $ be a stable $ \infty $-category,
    which is a recollement
    of $ i_* $ and $ j_* $.
    We write $ i^* $, $ j^* $
    for the left adjoints of~$ i_* $, $ j_* $, respectively,
    and $ j_! $ for that of~$ j^* $.
    Then for an object $ C \in \cat{C} $,
    the cofiber sequence $ j_!j^*C \to C \to i_*i^*C $
    splits if and only if the map
    $ i^*C \to i^*j_*j^*C $ is zero.
\end{lemma}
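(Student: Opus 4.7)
The plan is as follows. I would construct a canonical map $\alpha_C \colon i^! C \to i^* C$ (where $i^!$ denotes the right adjoint to $i_*$, which exists automatically in the stable setting) and establish two things: (i) the cofiber sequence $j_! j^* C \to C \to i_* i^* C$ splits if and only if $\alpha_C$ admits a section; and (ii) $\alpha_C$ fits into a cofiber sequence $i^! C \xrightarrow{\alpha_C} i^* C \to i^* j_* j^* C$ in $\cat{C}_0$. Granting these, the conclusion is immediate from the standard fact that a map in a stable $\infty$-category admits a section exactly when the subsequent map in its cofiber sequence is null.

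To construct $\alpha_C$ and verify (ii), let $F$ denote the fiber of the unit $C \to j_* j^* C$. Applying $j^*$ and using $j^* j_* \simeq \mathrm{id}$ gives $j^* F \simeq 0$, so $F$ lies in the essential image of $i_*$ and we can write $F \simeq i_* i^! C$ with $i^! C := i^* F$. Applying $i^*$ to the fiber sequence $F \to C \to j_* j^* C$ and identifying $i^* F \simeq i^! C$ then produces the sequence in (ii), with $\alpha_C := i^* (F \to C)$.

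For (i), a splitting is exactly a section $s \colon i_* i^* C \to C$ of the projection $p \colon C \to i_* i^* C$. Apply $\Map(i_* i^* C, -)$ to the fiber sequence $F \to C \to j_* j^* C$: the rightmost mapping space
\[
    \Map(i_* i^* C, j_* j^* C) \simeq \Map(j^* i_* i^* C, j^* C)
\]
is contractible because $j^* i_* \simeq 0$, so $\Map(i_* i^* C, F) \simeq \Map(i_* i^* C, C)$; combined with the fully faithfulness of $i_*$, this yields a bijection between sections $s$ and maps $\tilde s \colon i^* C \to i^! C$. Using the relation $p \circ (F \to C) = i_* \alpha_C$ (verified by applying $i^*$ together with $i^* p \simeq \mathrm{id}$), a short computation shows that under the induced equivalence $\Map(i_* i^* C, i_* i^* C) \simeq \Map(i^* C, i^* C)$ the composite $p \circ s$ is sent to $\alpha_C \circ \tilde s$; hence $p \circ s = \mathrm{id}$ if and only if $\tilde s$ is a section of $\alpha_C$.

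The hardest part will be the last bookkeeping step, where one must trace $p \circ s \mapsto \alpha_C \circ \tilde s$ through the chain of mapping-space equivalences; this is a routine but slightly fiddly diagram chase. Everything else in the argument is formal.
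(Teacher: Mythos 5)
Your proof is correct, and the core mechanism is the same as the paper's: both hinge on the cofiber sequence $i^!C \to i^*C \to i^*j_*j^*C$ obtained by applying $i^*$ to the fiber sequence $i_*i^!C \to C \to j_*j^*C$, together with the adjunction $i_* \dashv i^!$. The difference is organizational. The paper treats the two directions asymmetrically: for the ``only if'' direction it argues directly that a splitting $C \simeq j_!j^*C \oplus i_*i^*C$ forces $i^*(C \to j_*j^*C)$ to vanish, using $i^*j_! \simeq 0$ and $j^*i_* \simeq 0$; for the ``if'' direction it produces the section $i_*i^*C \to C$ by adjoining a section $i^*C \to i^!C$ of $\alpha_C$, essentially your step (i) read in one direction. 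You instead establish an honest bijection between sections of $p \colon C \to i_*i^*C$ and sections of $\alpha_C$ via $\Map(i_*i^*C, C) \simeq \Map(i_*i^*C, i_*i^!C) \simeq \Map(i^*C, i^!C)$ (valid because $j^*i_* \simeq 0$ kills the base of the relevant fiber sequence of mapping spaces), which handles both directions uniformly. This is a bit more systematic at the cost of the diagram chase you flag; the paper's ``only if'' argument is shorter but requires the separate vanishing observations. The elementary fact you invoke is indeed correct: for a cofiber sequence $A \xrightarrow{f} B \xrightarrow{g} C$ in a stable $\infty$-category, if $fs = \mathrm{id}_B$ then $g = g f s = 0$, and conversely $g = 0$ gives $A \simeq \fib(g) \simeq B \oplus \Omega C$ with $f$ the projection, which admits a section.
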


\begin{proof}
    We write $ i^! $ for the right adjoint of~$ i_* $.
    The ``only if'' direction follows from
    the fact that $ i^*(j_!j^*C) $ and $ i^*j_*j^*(i_*i^*C) $ are both zero.
    We wish to prove the converse.
    By applying $ i^* $ to the cofiber sequence
    $ i_*i^!C \to C \to j_*j^*C $ and shifting,
    we obtain a cofiber sequence
    $ \Sigma^{-1}i^*j_*j^*C \to i^!C \to i^*C $,
    which splits by assumption.
    Then the map $ i_*i^*C \to C $ corresponding
    to the section $ i^*C \to i^!C $ by adjunction
    induces the desired splitting.
\end{proof}

\subsection{The two monoidal structures on a functor category}\label{ss_611af07fdc}

In this subsection, we fix an $ \infty $-operad $ \E_k^{\otimes} $,
where $ k $ is a positive integer or the symbol~$ \infty $.

Recall that an $ \E_k $-monoidal $ \infty $-category
can be regarded as an $ \E_k $-algebra object of
the symmetric monoidal category $ \Cat{Cat}_{\infty}^{\times} $.
In the following discussion,
we often use this identification implicitly.

The following is a special case of \autocite[Proposition~3.2.4.4]{HA}:

\begin{lemma}\label{8e9b197f4f}
    There exist a symmetric monoidal structure on $ \Alg_{\E_k}(\Cat{Pr}) $
    such that
    the forgetful functor $ \Alg_{\E_k}(\Cat{Pr}) \to \Cat{Pr} $
    has a symmetric monoidal refinement.
    Also, the same holds for
    $ \Cat{Pr}_{\kappa}^{\otimes} $, where $ \kappa $ is an infinite regular cardinal.
\end{lemma}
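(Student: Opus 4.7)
The plan is to invoke \autocite[Proposition~3.2.4.4]{HA}, which equips $ \Alg_{\mathcal{O}}(\cat{D}) $ with a symmetric monoidal structure and promotes the forgetful functor $ \Alg_{\mathcal{O}}(\cat{D}) \to \cat{D} $ to a symmetric monoidal functor, whenever $ \mathcal{O}^{\otimes} $ is a coherent $ \infty $-operad and $ \cat{D}^{\otimes} $ is a symmetric monoidal $ \infty $-category whose tensor product preserves small colimits separately in each variable. I apply this twice, with $ \mathcal{O}^{\otimes} = \E_k^{\otimes} $, which is a coherent $ \infty $-operad by the results of \autocite[Chapter~5]{HA}.

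For the first assertion, take $ \cat{D}^{\otimes} = \Cat{Pr}^{\otimes} $ with the symmetric monoidal structure provided by \cref{tensor}. Small colimits in $ \Cat{Pr} $ exist (standard), and their compatibility with the tensor product is \cref{ffd2ebeeb5} of \cref{tensor}. The cited proposition then directly delivers the desired symmetric monoidal structure on $ \Alg_{\E_k}(\Cat{Pr}) $ together with the symmetric monoidal refinement of the forgetful functor.

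For the second assertion, take $ \cat{D}^{\otimes} = \Cat{Pr}_{\kappa}^{\otimes} $. Part \cref{dee93321da} of \cref{tensor} guarantees that the tensor product on $ \Cat{Pr} $ restricts to $ \Cat{Pr}_{\kappa} $ and hence supplies the requisite symmetric monoidal structure. Small colimits in $ \Cat{Pr}_{\kappa} $ exist, and the compatibility of the tensor product with these colimits is inherited from the first case, since the forgetful functor $ \Cat{Pr}_{\kappa} \to \Cat{Pr} $ preserves small colimits in a manner sufficient for the hypothesis. The cited proposition then applies again.

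I expect no real obstacle here; the heavy lifting is packaged inside the black box \autocite[Proposition~3.2.4.4]{HA}, and what remains is routine bookkeeping around its hypotheses. The most delicate point is the $ \kappa $-variant, where one must confirm that restricting from $ \Cat{Pr} $ to $ \Cat{Pr}_{\kappa} $—whose morphisms are constrained to preserve $ \kappa $-compact objects—does not disrupt the colimit compatibility required by \autocite[Proposition~3.2.4.4]{HA}. This, however, is standard for $ \kappa $-compactly generated $ \infty $-categories once one invokes \cref{dee93321da} of \cref{tensor}.
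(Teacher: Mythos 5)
Your proposal takes exactly the same approach as the paper: the paper proves this lemma simply by stating ``The following is a special case of \autocite[Proposition~3.2.4.4]{HA}'', and you invoke the same proposition and spell out the hypothesis-checking (coherence of $\E_k$ is not actually needed for that proposition, but it holds anyway, and the rest is the colimit compatibility from \cref{tensor}). No gap; you have simply made explicit what the paper leaves implicit.
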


First we use this to construct the pointwise $ \E_k $-monoidal structure.

\begin{definition}\label{a023043790}
    Let $ K $ be a small $ \infty $-category
    and $ \cat{C}^{\otimes} $ a presentable $ \E_k $-monoidal $ \infty $-category
    whose tensor products preserve colimits in each variable.
    Then combining with the cartesian $ \E_k $-monoidal structure on $ \Fun(K, \Cat{S}) $,
    we obtain an $ \E_k $-monoidal structure on
    $ \Fun(K, \cat{C}) \simeq \Fun(K, \Cat{S}) \otimes \cat{C} $ by
    using \cref{8e9b197f4f}.
    We call this the \emph{pointwise $\E_k$-monoidal structure}.
\end{definition}

The pointwise tensor products can be computed pointwise, as the name suggests.

We consider a condition under which this construction is compatible
with the compact generation property.
See also \cref{3752cd1622}
for another result in this direction.

\begin{proposition}\label{5a932f09f2}
    Let $ \kappa $ be an infinite regular cardinal.
    In the situation of \cref{a023043790},
    suppose furthermore that $ K $ is $ \kappa $-small and locally $ \kappa $-compact,
    $ \cat{C} $ is $ \kappa $-compactly generated,
    and the tensor products on $ \cat{C} $ restrict to~$ \cat{C}^{\kappa} $.
    Then the pointwise tensor products on $ \Fun(K, \cat{C}) $
    also restrict to $ \Fun(K, \cat{C})^{\kappa} $.
\end{proposition}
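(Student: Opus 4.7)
The plan is to reduce the statement to a pointwise calculation by using the characterization of $\kappa$-compact objects in a functor category supplied by \cref{1f1e4383ed}. Since $K$ is by hypothesis both $\kappa$-small and locally $\kappa$-compact, that proposition tells us that an object $F \in \Fun(K, \cat{C})$ belongs to $\Fun(K, \cat{C})^{\kappa}$ if and only if $F(k) \in \cat{C}^{\kappa}$ for every object $k \in K$. Thus the closure of $\Fun(K, \cat{C})^{\kappa}$ under the pointwise $\E_k$-monoidal operations will follow at once from the analogous closure of $\cat{C}^{\kappa}$ under the $\E_k$-monoidal operations of $\cat{C}$ (which is one of the hypotheses), provided the pointwise operations are indeed computed pointwise in all arities.

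More precisely, I would argue as follows. Given $F_1, \dotsc, F_n \in \Fun(K, \cat{C})^{\kappa}$, the remark following \cref{a023043790} identifies
\begin{equation*}
    (F_1 \otimes \dotsb \otimes F_n)(k) \simeq F_1(k) \otimes \dotsb \otimes F_n(k)
\end{equation*}
for every $k \in K$. Each $F_i(k)$ lies in $\cat{C}^{\kappa}$ by the characterization above, and by hypothesis the tensor products on $\cat{C}$ restrict to $\cat{C}^{\kappa}$, so the right-hand side is in $\cat{C}^{\kappa}$. Applying \cref{1f1e4383ed} in the reverse direction, $F_1 \otimes \dotsb \otimes F_n$ is a $\kappa$-compact object of $\Fun(K, \cat{C})$. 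The nullary case, i.e.\ that the monoidal unit of the pointwise structure is $\kappa$-compact, is handled identically: it is the constant functor at $\mathbf{1}_{\cat{C}} \in \cat{C}^{\kappa}$, hence takes values in $\cat{C}^{\kappa}$.

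The one point requiring care is justifying that the pointwise formula for the tensor product (and, in particular, for the unit) really follows from the somewhat indirect construction of the pointwise structure in \cref{a023043790}. The cleanest route is via the universal property: for each $k \in K$, evaluation $\mathrm{ev}_k \colon \Fun(K, \Cat{S}) \to \Cat{S}$ is a colimit-preserving cartesian symmetric monoidal functor, and hence, after restriction along $\E_k^{\otimes} \to \Cat{Comm}^{\otimes}$ and tensoring with $\mathrm{id}_{\cat{C}}$ via \cref{8e9b197f4f}, yields a colimit-preserving $\E_k$-monoidal functor $\Fun(K, \Cat{S}) \otimes \cat{C} \to \Cat{S} \otimes \cat{C}$. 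Under the identifications of \cref{188b761524} this agrees with evaluation at $k$ as an ordinary functor, and the $\E_k$-monoidal refinement gives the desired pointwise formula.

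The main obstacle is therefore not the closure argument itself, which is formal once the pointwise formula is available, but rather the careful verification that evaluation at each object of $K$ is $\E_k$-monoidal with respect to the structure introduced in \cref{a023043790}. Once this compatibility is in place, the proposition follows immediately by combining \cref{1f1e4383ed} with the assumed closure of $\cat{C}^{\kappa}$ under tensor products.
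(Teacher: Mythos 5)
Your argument is correct and rests on the same key ingredient as the paper's, namely \cref{1f1e4383ed}: an object of $\Fun(K,\cat{C})$ is $\kappa$-compact exactly when it factors through $\cat{C}^{\kappa}$, so compactness of tensor products follows once the pointwise formula $(F_1\otimes\dotsb\otimes F_n)(k)\simeq F_1(k)\otimes\dotsb\otimes F_n(k)$ and the hypothesis on $\cat{C}^{\kappa}$ are combined. The difference is in how the pointwise formula enters. The paper reduces without loss of generality to $\cat{C}=\Cat{S}$, where the pointwise structure is literally the cartesian one and the formula is automatic, then invokes the closure of $\kappa$-compact spaces under finite products; the general case is subsumed by the symmetric monoidal structure on $\Cat{Pr}_{\kappa}$ from \cref{tensor}\cref{dee93321da} and \cref{8e9b197f4f}. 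You instead keep $\cat{C}$ general and justify the pointwise formula directly by showing evaluation at each $k\in K$ is an $\E_k$-monoidal colimit-preserving functor, which is exactly the content of the remark following \cref{a023043790}. Both routes are correct; the paper's WLOG reduction is a bit slicker because it hides the verification you flag as ``the one point requiring care'' behind the already-established tensor product machinery, whereas your version is more self-contained at the cost of a short universal-property argument.
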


\begin{proof}
    Without loss of generality we may assume that $ \cat{C} = \Cat{S} $.
    Since finite products of $ \kappa $-compact spaces are again $ \kappa $-compact,
    the desired result follows from \cref{1f1e4383ed}.
\end{proof}

We then consider the Day convolution $ \E_k $-monoidal structure.
We first note that for an $ \E_k $-monoidal $ \infty $-category~$ K^{\otimes} $,
we can equip a canonical $ \E_k $-monoidal structure
on the opposite~$ K^{\op} $; see \autocite[Remark~2.4.2.7]{HA}.
Therefore, for such $ K^{\otimes} $,
we have an $ \E_k $-monoidal structure on $ \Fun(K, \Cat{S})
\simeq \PShv(K^{\op}) $
by~\cref{b1b979b395} of \cref{tensor}.

\begin{definition}\label{c3c54673e6}
    Let $ K^{\otimes} $ be a small $ \E_k $-monoidal $ \infty $-category
    and $ \cat{C}^{\otimes} $ a presentable $ \E_k $-monoidal $ \infty $-category
    whose tensor products preserve colimits in each variable.
    Then considering the $ \E_k $-monoidal structure on $ \Fun(K, \Cat{S}) $
    explained above,
    we obtain an $ \E_k $-monoidal structure on
    $ \Fun(K, \cat{C}) \simeq \Fun(K, \Cat{S}) \otimes \cat{C} $ by
    using \cref{8e9b197f4f}.
    We call this the \emph{Day convolution $\E_k$-monoidal structure}.
\end{definition}

Concretely,
the Day convolution tensor products can be computed as follows:

\begin{lemma}\label{9d1faa2fd7}
    In the situation of \cref{c3c54673e6},
    the Day convolution tensor product
    of $ F_1, \dotsc, F_n \in \Fun(K, \cat{C}) $
    is equivalent to the left Kan extension of the composite
    \begin{equation*}
        K \times \dotsb \times K \xrightarrow{F_1 \times \dotsb \times F_n}
        \cat{C} \times \dotsb \times \cat{C}
        \xrightarrow{\otimes} \cat{C}
    \end{equation*}
    along the tensor product $ K \times \dotsb \times K \to K $.
    Hence
    for $ k \in K $ we have
    \begin{equation*}
        (F_1 \otimes \dotsb \otimes F_n)(k)
        \simeq \injlim_{k_1 \otimes \dotsb \otimes k_n \to k}
        F_1(k_1) \otimes \dotsb \otimes F_n(k_n).
    \end{equation*}
\end{lemma}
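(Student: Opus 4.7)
The plan is to treat the case $\cat{C} = \Cat{S}$ first, where the $\E_k$-monoidal structure is defined directly via the symmetric monoidal refinement of $\PShv$, and then bootstrap to general $\cat{C}$ using the equivalence $\Fun(K, \cat{C}) \simeq \Fun(K, \Cat{S}) \otimes \cat{C}$ of \cref{188b761524}.

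For $\cat{C} = \Cat{S}$: the Day convolution on $\Fun(K, \Cat{S}) \simeq \PShv(K^{\op})$ is the image of $K^{\op,\otimes}$ under the symmetric monoidal refinement of $\PShv$ from \cref{b1b979b395} of \cref{tensor}. By the universal property of this refinement, the Yoneda embedding $y \colon K^{\op} \hookrightarrow \PShv(K^{\op})$ lifts to an $\E_k$-monoidal functor, so $y(k_1) \otimes \dotsb \otimes y(k_n) \simeq y(k_1 \otimes \dotsb \otimes k_n)$. Combining this with cocontinuity of $\otimes$ in each variable (\cref{ffd2ebeeb5} of \cref{tensor}) and writing each $F_i$ as the canonical colimit of representables indexed by its category of elements, I obtain
\begin{equation*}
    F_1 \otimes \dotsb \otimes F_n \simeq \injlim_{((k_1, x_1), \dotsc, (k_n, x_n))} y(k_1 \otimes \dotsb \otimes k_n),
\end{equation*}
which is precisely the left Kan extension of the claimed composite along $\otimes \colon K \times \dotsb \times K \to K$. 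Evaluating at $k \in K$ then gives the pointwise formula, using that $y(k_1 \otimes \dotsb \otimes k_n)(k) \simeq \Map_K(k_1 \otimes \dotsb \otimes k_n, k)$.

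For general $\cat{C}$: by \cref{8e9b197f4f}, the Day convolution on $\Fun(K, \cat{C}) \simeq \Fun(K, \Cat{S}) \otimes \cat{C}$ is the tensor product in $\Alg_{\E_k}(\Cat{Pr})$ of the Day convolution on $\Fun(K, \Cat{S})$ and $\otimes_{\cat{C}}$. It is therefore cocontinuous in each variable, and on ``pure tensors'' $F_i \boxtimes C_i$, defined by $(F \boxtimes C)(k) = F(k) \otimes C$ under the tensoring of $\cat{C}$ over $\Cat{S}$, it is computed by
\begin{equation*}
    (F_1 \boxtimes C_1) \otimes \dotsb \otimes (F_n \boxtimes C_n) \simeq (F_1 \otimes \dotsb \otimes F_n) \boxtimes (C_1 \otimes \dotsb \otimes C_n).
\end{equation*}
The proposed left Kan extension formula defines an $n$-ary operation that is likewise cocontinuous in each variable, and that agrees with the above on pure tensors by Step 1 together with the fact that left Kan extension commutes with tensoring over $\Cat{S}$. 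Since $\Fun(K, \cat{C})$ is generated under colimits by pure tensors, the two operations coincide.

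The main obstacle will be the rigorous identification carried out in Step 2: the Day convolution is defined abstractly through the $\E_k$-algebra tensor product in $\Cat{Pr}$, whereas the left Kan extension formula is concrete. The generator-and-cocontinuity argument above is natural, but to make it precise at the $\infty$-categorical level one should package both $n$-ary operations as cocontinuous functors out of $\Fun(K, \cat{C})^{\otimes n}$ via \cref{9440ef3338} of \cref{tensor}, and verify that these functors agree by checking agreement on the pure tensor generators.
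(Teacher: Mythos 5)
Your proposal is correct and follows essentially the same approach as the paper: reduce to $\cat{C} = \Cat{S}$ and to representable presheaves by exploiting that both sides are cocontinuous in each variable and that the Yoneda embedding is $\E_k$-monoidal. The paper collapses your two reduction steps into one, and it handles the "main obstacle" you flag at the end by first producing the comparison map via the universal property of the left Kan extension (so that one only has to check an already-given natural transformation is an equivalence on generators), but that is the same idea you sketch with \cref{9440ef3338}.
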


We note that in the case $ \cat{C} = \Cat{S} $,
this is claimed in \autocite[Remark~4.8.1.13]{HA}.

\begin{proof}
    By universality,
    we have a canonical map from
    the functor constructed in the statement
    to the tensor product.
    Since both constructions are compatible with colimits in each variable,
    we can assume that $ \cat{C} = \Cat{S} $
    and that $ F_1 $, \dots,~$ F_n $ are in the image of the Yoneda embedding
    $ K^{\op} \hookrightarrow \PShv(K^{\op}) $.
    In this case, the desired claim is trivial.
\end{proof}

The author learned the following fact from Jacob Lurie,
which says that the Day convolution counterpart of \cref{5a932f09f2}
does not need any assumption on~$ K^{\otimes} $:

\begin{lemma}\label{2b52e3227f}
    Let $ \kappa $ be an infinite regular cardinal.
    In the situation of \cref{c3c54673e6},
    suppose furthermore that
    $ \cat{C} $ is $ \kappa $-compactly generated
    and the tensor products on~$ \cat{C} $ restrict to $ \cat{C}^{\kappa} $.
    Then the Day convolution tensor products on $ \Fun(K, \cat{C}) $
    also restrict to $ \Fun(K, \cat{C})^{\kappa} $.
\end{lemma}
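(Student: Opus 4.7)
The strategy is to identify a convenient family of $\kappa$-compact generators of $\Fun(K, \cat{C})$ and to show directly that the Day convolution of two such generators is again of the same form. Combined with the fact that Day convolution preserves colimits in each variable (immediate from the construction in \cref{c3c54673e6} via \cref{8e9b197f4f}), this will imply that it preserves $\kappa$-compact objects in general, via the standard principle that a colimit-preserving functor between $\kappa$-compactly generated $\infty$-categories which carries a family of $\kappa$-compact generators into $\kappa$-compact objects preserves all $\kappa$-compact objects (since every $\kappa$-compact object is a retract of a $\kappa$-small colimit of such generators).

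For $k \in K$ and $c \in \cat{C}^{\kappa}$, let $j_{k, c} \in \Fun(K, \cat{C})$ denote the functor $l \mapsto \Map_K(k, l) \otimes c$, where ``$S \otimes c$'' denotes the canonical copower of $c$ by a space $S \in \Cat{S}$ (that is, $\injlim_{S} c$). A standard adjunction plus the Yoneda lemma gives $\Map_{\Fun(K, \cat{C})}(j_{k, c}, F) \simeq \Map_{\cat{C}}(c, F(k))$; since evaluation at $k$ preserves $\kappa$-filtered colimits and $c$ is $\kappa$-compact, $j_{k, c}$ is $\kappa$-compact. The same formula shows that the family $\{j_{k, c}\}_{k, c}$ generates $\Fun(K, \cat{C})$: if every $\Map(j_{k, c}, F)$ is contractible, then $F(k) \simeq 0$ for each $k$ (using that $\cat{C}^{\kappa}$ generates $\cat{C}$), whence $F \simeq 0$.

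Finally, applying the formula of~\cref{9d1faa2fd7} together with the fact that $\otimes_{\cat{C}}$ preserves colimits in each variable, we compute
\begin{equation*}
    (j_{k_1, c_1} \otimes j_{k_2, c_2})(k)
    \simeq \Bigl(\injlim_{l_1 \otimes l_2 \to k} \Map_K(k_1, l_1) \times \Map_K(k_2, l_2)\Bigr) \otimes (c_1 \otimes_{\cat{C}} c_2),
\end{equation*}
where the parenthesized colimit is taken in~$\Cat{S}$. By the Yoneda lemma---equivalently, by the fact that left Kan extension of the representable presheaf $\Map_{K \times K}((k_1, k_2), -)$ along the tensor product $K \times K \to K$ yields the representable $\Map_K(k_1 \otimes k_2, -)$---this colimit is canonically equivalent to $\Map_K(k_1 \otimes k_2, k)$. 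Hence $j_{k_1, c_1} \otimes j_{k_2, c_2} \simeq j_{k_1 \otimes k_2,\, c_1 \otimes_{\cat{C}} c_2}$, which lies in our generating family because $c_1 \otimes_{\cat{C}} c_2 \in \cat{C}^{\kappa}$ by hypothesis. Invoking the principle stated in the first paragraph completes the proof; the main subtlety is the Yoneda identification of the Day convolution colimit, after which the argument is essentially formal.
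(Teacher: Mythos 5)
Your argument is correct and rests on the same core computation as the paper: the identification $j_{k_1,c_1}\otimes j_{k_2,c_2}\simeq j_{k_1\otimes k_2,\,c_1\otimes_{\cat{C}} c_2}$ via the left Kan extension / Yoneda step is precisely what the paper invokes by saying the Yoneda embedding admits an $\E_k$-monoidal refinement, and your reduction to a generating family is what the paper gets from \cite[Proposition~5.3.4.17]{HTT}. The one genuine difference is that the paper first reduces to $\cat{C}=\Cat{S}$ (using that the Day-convolution structure on $\Fun(K,\cat{C})$ is by construction the tensor product $\Fun(K,\Cat{S})\otimes\cat{C}$ in $\Alg_{\E_k}(\Cat{Pr}_\kappa)$), whereas you carry the $\cat{C}^\kappa$-factor along explicitly; this is slightly longer but avoids justifying that reduction. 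One small imprecision: a $\kappa$-compact object of $\Fun(K,\cat{C})$ need not literally be a retract of a single $\kappa$-small colimit of the $j_{k,c}$; the correct statement, which is what \cite[5.3.4.17]{HTT} gives, is that $\Fun(K,\cat{C})^{\kappa}$ is the smallest full subcategory containing them and closed under $\kappa$-small colimits and retracts. Your conclusion still follows, since for fixed $G$ the functor $-\otimes G$ preserves those closure operations, so if it carries each $j_{k,c}$ into $\Fun(K,\cat{C})^{\kappa}$ it carries all of $\Fun(K,\cat{C})^{\kappa}$ there, and a second pass in the other variable (together with the nullary case, i.e.\ compactness of the unit $j_{\unit_K,\unit_{\cat{C}}}$) handles the $n$-ary tensor products.
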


\begin{proof}
    Without loss of generality we may assume that $ \cat{C} = \Cat{S} $.
    According to \autocite[Proposition~5.3.4.17]{HTT}, we have that
    $ \PShv(K^{\op})^{\kappa} $ is the smallest full subcategory
    of $ \PShv(K^{\op}) $ that contains
    the image of the Yoneda embedding and is closed under
    $ \kappa $-small colimits and retracts.
    Since the Yoneda embedding has an $ \E_k $-monoidal refinement,
    the desired result follows.
\end{proof}

Now we give a comparison result of these two $ \E_k $-monoidal structures.

\begin{proposition}\label{e371b27143}
    In the situation of \cref{c3c54673e6},
    suppose furthermore that the $ \E_k $-monoidal structure
    on~$ K $ is cocartesian.
    Then on $ \Fun(K, \cat{C}) $ the pointwise
    and Day convolution $ \E_k $-monoidal structures are equivalent.
\end{proposition}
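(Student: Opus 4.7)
The plan is to reduce to the universal case $\cat{C} = \Cat{S}$ and then to identify the two $\E_k$-monoidal structures on $\Fun(K, \Cat{S}) \simeq \PShv(K^{\op})$ using that the Yoneda embedding preserves finite products. Both the pointwise and Day convolution $\E_k$-monoidal structures on $\Fun(K, \cat{C})$ are constructed, via \cref{8e9b197f4f} together with \cref{188b761524}, by tensoring the corresponding structure on $\Fun(K, \Cat{S})$ with $\cat{C}$ in $\Alg_{\E_k}(\Cat{Pr})$; so a canonical $\E_k$-monoidal equivalence on $\Fun(K, \Cat{S})$ propagates to one on $\Fun(K, \cat{C})$.

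For the universal case, suppose $K^{\otimes}$ is cocartesian, so that $K^{\op}$ inherits a cartesian $\E_k$-monoidal structure. Both the cartesian and Day convolution structures on $\PShv(K^{\op})$ are presentable $\E_k$-monoidal with tensor products preserving colimits in each variable; for the cartesian case, this uses that products in $\Cat{S}$ distribute over colimits and that everything is computed pointwise. The Yoneda embedding $y \colon K^{\op} \to \PShv(K^{\op})$ admits $\E_k$-monoidal refinements with respect to both structures: for Day convolution, essentially by construction via part \cref{b1b979b395} of \cref{tensor}; for the cartesian structure, because $y$ preserves all limits and in particular finite products.

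To conclude, I would invoke the universal property of Day convolution, obtained by applying $\Alg_{\E_k}(-)$ to the symmetric monoidal adjunction underlying part \cref{b1b979b395} of \cref{tensor}: $\E_k$-monoidal colimit-preserving functors out of $\PShv(K^{\op})$ equipped with the Day convolution structure correspond to $\E_k$-monoidal functors out of $K^{\op}$. The $\E_k$-monoidal lift of $y$ to the cartesian structure then produces an $\E_k$-monoidal functor from Day convolution to cartesian on $\PShv(K^{\op})$ whose underlying functor is the identity, hence an $\E_k$-monoidal equivalence. Alternatively, one can argue by density: by \cref{9d1faa2fd7}, the Day tensor of representables $y(k_1), \dotsc, y(k_n)$ is $y(k_1 \otimes \dotsb \otimes k_n)$, which coincides with $y(k_1) \times \dotsb \times y(k_n)$ in $\PShv(K^{\op})$ since $\otimes$ on $K$ is the coproduct and Yoneda sends coproducts in $K$ to products in $\PShv(K^{\op})$; the conclusion follows because every object of $\PShv(K^{\op})$ is a colimit of representables and both tensor products preserve colimits in each variable.

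The main obstacle is promoting this pointwise identification on representables to a bona fide equivalence of $\E_k$-monoidal structures, i.e., a morphism in $\Alg_{\E_k}(\Cat{Pr})$. This requires careful handling of coherences, either by extracting the $\E_k$-monoidal universal property of Day convolution from part \cref{b1b979b395} of \cref{tensor} (which only gives $\PShv$ as a symmetric monoidal functor between very large $\infty$-categories) or by constructing the equivalence directly from the $\E_k$-monoidal refinement of $y$ via \cref{9440ef3338} of \cref{tensor}.
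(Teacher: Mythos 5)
Your approach is essentially the paper's: reduce to $\cat{C} = \Cat{S}$, then compare the two structures on $\PShv(K^{\op})$ by showing they agree on the Yoneda image and using that both tensor products are cocontinuous in each variable. The one move the paper makes that you do not is the further reduction ``without loss of generality $k = \infty$'' (valid because a cocartesian $\E_k$-monoidal structure is the restriction of the essentially unique cocartesian $\E_\infty$-structure, and similarly for the cartesian and Day convolution structures on presheaves); having done so, the paper simply cites the uniqueness of cartesian symmetric monoidal structures on $K^{\op}$, which at once identifies the two restrictions of the tensor product to the Yoneda image \emph{with all coherences}. This is precisely the ``main obstacle'' you flag at the end: rather than trying to upgrade a pointwise identification of tensor products on representables to a full $\E_k$-monoidal equivalence, or wrestling with extracting an $\E_k$-monoidal universal property from \cref{b1b979b395} of \cref{tensor}, the paper dissolves the coherence issue by working at $\E_\infty$ from the start and invoking the rigidity of (co)cartesian structures. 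If you add that reduction, your first (universal-property) argument closes cleanly; the density sketch alone does not, for the reason you yourself identify.
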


\begin{proof}
    Without loss of generality we may assume that $ \cat{C} = \Cat{S} $
    and $ k = \infty $.
    Since both tensor products preserve colimits in each variable
    and restrict the image of the Yoneda embedding,
    it suffices to show that
    they are equivalent on the image.
    Hence the result follows from
    the uniqueness of cartesian symmetric monoidal structures on~$ K^{\op} $.
\end{proof}

Combining this with \cref{2b52e3227f},
we have the following:

\begin{corollary}\label{3752cd1622}
    Let $ \kappa $ be an infinite regular cardinal.
    In the situation of \cref{a023043790},
    suppose furthermore that
    $ K $ has finite coproducts and
    $ \cat{C} $ is $ \kappa $-compactly generated.
    Then the conclusion of \cref{5a932f09f2} holds.
\end{corollary}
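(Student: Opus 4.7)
The plan is to reduce the statement to the Day convolution case, where \cref{2b52e3227f} applies without any size or local-compactness restriction on the indexing category. The crucial observation is that when $ K $ has finite coproducts, it carries a canonical cocartesian $ \E_k $-monoidal structure (obtained, for instance, by restricting the cocartesian $ \E_{\infty} $-monoidal structure along the map of $ \infty $-operads $ \E_k^{\otimes} \to \E_{\infty}^{\otimes} $). This puts us in the setting of both \cref{c3c54673e6} and \cref{e371b27143}.

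First I would equip $ K $ with this cocartesian $ \E_k $-monoidal structure and form the Day convolution $ \E_k $-monoidal structure on $ \Fun(K, \cat{C}) $ as in \cref{c3c54673e6}. By \cref{e371b27143}, this Day convolution structure is equivalent, as an $ \E_k $-monoidal structure, to the pointwise one of \cref{a023043790}. Next, I would invoke \cref{2b52e3227f}: under the standing hypothesis that $ \cat{C} $ is $ \kappa $-compactly generated and its tensor products restrict to $ \cat{C}^{\kappa} $, the Day convolution tensor products on $ \Fun(K, \cat{C}) $ restrict to $ \Fun(K, \cat{C})^{\kappa} $. Transporting this property along the equivalence of $ \E_k $-monoidal structures yields the same conclusion for the pointwise tensor products, which is exactly the conclusion of \cref{5a932f09f2}.

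There is no genuine obstacle here; the result is essentially bookkeeping once the two inputs \cref{e371b27143} and \cref{2b52e3227f} are in hand. The only minor point that deserves attention is the consistent handling of the $ \E_k $-monoidal structure on $ K^{\op} $ used to define Day convolution: because the cocartesian structure on $ K $ gives a cartesian structure on $ K^{\op} $, the symmetric monoidal refinement of $ \PShv $ from \cref{b1b979b395} of \cref{tensor} produces the desired $ \E_k $-monoidal structure on $ \Fun(K, \Cat{S}) \simeq \PShv(K^{\op}) $, which is the one implicitly used by \cref{e371b27143}. Everything else is a direct application of the two cited results.
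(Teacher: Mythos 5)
Your proof is correct and follows exactly the route the paper intends: the corollary appears immediately after \cref{e371b27143} with the remark "Combining this with \cref{2b52e3227f}, we have the following," so equipping $K$ with its cocartesian $\E_k$-monoidal structure, identifying the pointwise structure with Day convolution via \cref{e371b27143}, and then invoking \cref{2b52e3227f} is precisely the argument. Your parenthetical care about how the cocartesian structure on $K$ dualizes to the cartesian one on $K^{\op}$ is a fine bit of bookkeeping but does not change the essential identity of the two arguments.
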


\begin{remark}\label{7891bef018}
    We cannot completely remove assumptions on~$ K $:
    When $ K = \ZZ $, the final object of $ \Fun(\ZZ, \Cat{S}) $
    is not compact.
    See also \cref{31116799a2}.
\end{remark}

\section{Latticial approach to tensor triangular geometry}\label{s_7bfd3c8f11}

In this section, first we review the notion of
the Balmer spectrum using distributive lattices.
In \cref{s_tensorless} we introduce a variant of
tensor triangular geometry.

\subsection{Our setting}\label{ss_b7e8dab4b7}

Let $ \Cat{Pr}_{\omega}^{\st} $ denote
the full subcategory of $ \Cat{Pr}_{\omega} $ spanned
by compactly generated stable $ \infty $-categories,
to which
the symmetric monoidal structure on~$ \Cat{Pr}_{\omega} $
explained in \cref{tensor} restricts.
A big tt-$ \infty $-category, which is defined in \cref{04732c0097},
can be seen as
an $ \E_2 $-algebra object of $ (\Cat{Pr}_{\omega}^{\st})^{\otimes} $.

We let $ \Cat{Cat}_{\infty}^{\perf} $ denote the large
$ \infty $-category of idempotent complete stable $ \infty $-categories
whose morphisms are exact functors.
The equivalence
$ {\Ind} \colon \Cat{Cat}_{\infty}^{\perf} \to \Cat{Pr}_{\omega}^{\st} $
induces a symmetric monoidal structure on it.

\begin{definition}\label{016acc2052}
    A \emph{tt-$ \infty $-category}
    is an $ \E_2 $-algebra object of
    $ (\Cat{Cat}_{\infty}^{\perf})^{\otimes} $;
    in concrete terms, a tt-$ \infty $-category is an idempotent complete
    stable $ \infty $-category equipped with an $ \E_2 $-monoidal
    structure whose tensor products are exact in each variable.
\end{definition}

By definition,
the large $ \infty $-category $ \Alg_{\E_2}(\Cat{Cat}_{\infty}^{\perf}) $
of tt-$ \infty $-categories
and the very large $ \infty $-category $ \Alg_{\E_2}(\Cat{Pr}_{\omega}^{\st}) $
of big tt-$ \infty $-categories are equivalent.

\begin{remark}\label{hikaku}
    There are several differences between our setting
    and that of the usual theory of tensor triangular geometry,
    as found in \autocite[Hypothesis~21]{Balmer10}:
    \begin{enumerate}
        \item
            We use an $ \infty $-categorical enhancement.
            We note that by \autocite[Lemma~1.2.4.6]{HA}
            the idempotent completeness assumptions in both settings
            are equivalent.
        \item
            We consider an $ \E_2 $-monoidal structure,
            so that the induced monoidal structure on the underlying
            triangulated category is not necessarily symmetric,
            but braided.
            Actually,
            the arguments of this paper works
            with slight modifications
            even if calling an ($ \E_1 $-)algebra object of
            $ (\Cat{Cat}_{\infty}^{\perf})^{\otimes} $ a tt-$ \infty $-category,
            mainly since many notions including the Balmer spectrum
            only depend on the underlying ($ \E_1 $-)monoidal structure.
            However, the author does not know if such a generalization
            is useful.
        \item
            We do not impose any rigidity condition.
            This is because we do not need it
            for our computations.
    \end{enumerate}
\end{remark}

\subsection{The Stone duality}\label{ss_4ad569a7b6}

We review the Stone duality for distributive lattices.
For the basic theory, we refer the reader to \autocite[Section~II.3]{Johnstone82}.

We write $ \Cat{DLat} $, $ \Cat{Loc} $, $ \Cat{Loc}^{\coh} $, $ \Cat{Top} $
and $ \Cat{Top}^{\coh} $ for
the category of distributive lattices, locales, coherent locales,
topological spaces, and coherent topological spaces (also called spectral spaces),
respectively.
Note that
both inclusions
$ \Cat{Loc}^{\coh} \subset \Cat{Loc} $ and $ \Cat{Top}^{\coh} \subset \Cat{Top} $
are not full since only quasicompact maps are considered as morphisms in them.
The Stone duality for distributive lattices states that
the ideal frame functor
$ {\Idl} \colon \Cat{DLat} \to (\Cat{Loc}^{\coh})^{\op} $ and
the spectrum functor
$ {\Spec} \colon \Cat{DLat}^{\op} \to \Cat{Top}^{\coh} $ are equivalences,
and these two are compatible with the functor
$ {\pt} \colon \Cat{Loc} \to \Cat{Top} $
that sends a locale to its space of points.

We have the following consequences:

\begin{lemma}\label{6d78f23da1}
    The (nonfull) inclusion $ \Cat{Loc}^{\coh} \hookrightarrow \Cat{Loc} $ preserves
    (small) limits.
\end{lemma}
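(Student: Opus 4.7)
The plan is to pass through the Stone duality recalled above. Under the equivalence $ \Idl \colon \Cat{DLat} \xrightarrow{\simeq} (\Cat{Loc}^{\coh})^{\op} $, the inclusion $ \Cat{Loc}^{\coh} \hookrightarrow \Cat{Loc} $ corresponds to the opposite of the functor that sends a distributive lattice $ L $ to its frame of ideals $ \Idl(L) $, regarded as a locale. Accordingly, the inclusion preserves small limits if and only if $ \Idl $ preserves small colimits when viewed as a functor from $ \Cat{DLat} $ to the category of frames.

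To verify the latter, I would exhibit $ \Idl $ as a left adjoint to the forgetful functor from frames to distributive lattices. This forgetful functor is well-defined because every frame is in particular a distributive lattice, and every frame morphism---preserving finite meets and arbitrary joins---is in particular a morphism of distributive lattices. The adjunction is standard: a frame map out of $ \Idl(L) $ is determined by its restriction to the principal ideals, which form a distributive sublattice isomorphic to $ L $, and any morphism of distributive lattices $ L \to F $ extends uniquely to a frame morphism $ \Idl(L) \to F $, as spelled out in \autocite[Section~II.3]{Johnstone82}.

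Being a left adjoint, $ \Idl $ then preserves all small colimits in the category of frames, and via the identification above this yields the claim. The only point requiring any care is the compatibility between the Stone duality and the passage from coherent locales to all locales, but this is built into the definitions: a coherent locale has frame $ \Idl(L) $ for some distributive lattice $ L $, and a coherent morphism between coherent locales corresponds under $ \Idl $ precisely to a morphism of distributive lattices, so no mismatch arises.
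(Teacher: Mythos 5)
Your proof is correct and takes essentially the same approach as the paper's: pass through Stone duality, and reduce to the fact that $\Idl\colon\Cat{DLat}\to\Cat{Frm}$ is left adjoint to the forgetful functor, hence preserves colimits. The paper cites \autocite[Corollary~II.2.11]{Johnstone82} precisely for this adjunction and leaves the rest implicit.
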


\begin{proof}
    This follows from 
    \autocite[Corollary~II.2.11]{Johnstone82}
    and the Stone duality.
\end{proof}

\begin{proposition}\label{5f0590fbb0}
    The spectrum functor
    $ \Cat{DLat}^{\op} \to \Cat{Top} $ preserves
    (small) limits.
\end{proposition}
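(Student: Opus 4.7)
The plan is to factor the spectrum functor through locales and reduce the claim to three known limit-preservation statements, each of which is either trivial or already recorded in the excerpt.

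Concretely, the Stone duality (as recalled just before the statement) identifies $\Spec \colon \Cat{DLat}^{\op} \to \Cat{Top}$ with the composite
\begin{equation*}
    \Cat{DLat}^{\op} \xrightarrow{\Idl} \Cat{Loc}^{\coh} \hookrightarrow \Cat{Loc} \xrightarrow{\pt} \Cat{Top},
\end{equation*}
where the compatibility clause in the statement of the Stone duality is exactly what says that $\pt \circ \Idl$ agrees (on the level of underlying topological spaces) with $\Spec$. So it suffices to show that each of the three factors preserves small limits.

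First I would note that $\Idl$ is an equivalence of categories by the Stone duality, hence certainly preserves all limits. Second, the inclusion $\Cat{Loc}^{\coh} \hookrightarrow \Cat{Loc}$ preserves small limits by \cref{6d78f23da1}. Third, the functor $\pt \colon \Cat{Loc} \to \Cat{Top}$ is the right adjoint of the open-set functor sending a topological space to its locale of opens, so it preserves all small limits as a right adjoint. Composing these three limit-preservation statements yields the conclusion.

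There is no real obstacle here beyond ensuring that the factorization is the correct one; the only subtle point is that limits in $\Cat{Loc}^{\coh}$ a priori need not agree with limits in $\Cat{Loc}$ (since $\Cat{Loc}^{\coh} \subset \Cat{Loc}$ is not a full subcategory, as only quasicompact maps are admitted), but this is precisely the content of \cref{6d78f23da1}. Once that lemma is invoked, the proof reduces to a one-line composition argument.
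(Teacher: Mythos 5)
Your proof is correct and follows essentially the same route as the paper: factor $\Spec$ as $\pt \circ (\Cat{Loc}^{\coh} \hookrightarrow \Cat{Loc}) \circ \Idl$, use \cref{6d78f23da1} for the middle factor, and use that $\pt$ is a right adjoint (to the open-set functor) for the last. The only difference is presentational — you spell out the factorization and the role of the Stone duality compatibility explicitly, whereas the paper compresses this into a citation of the spatiality of coherent locales.
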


\begin{proof}
    Since the functor $ {\pt} \colon \Cat{Loc} \to \Cat{Top} $
    has a left adjoint, which sends a topological space to its frame of open sets,
    it preserves limits.
    From \cref{6d78f23da1} and the fact that coherent locales are spatial,
    we obtain the result.
\end{proof}

\subsection{The Zariski lattice and the Balmer spectrum}\label{ss_d4fc9308b6}

In this subsection, we introduce the notion of the Zariski lattice
of a tt-$ \infty $-category.

\begin{definition}\label{8929dee0e0}
    Let $ \cat{T}^{\otimes} $ be a tt-$ \infty $-category.
    A \emph{radical ideal} of~$ \cat{T}^{\otimes} $ is
    a stable full replete subcategory
    $ I \subset \cat{T} $ that satisfies the following conditions:
    \begin{enumerate}
        \item
            \label{9d9ed4c715}
            If $ C \oplus D \in I $ for some $ C, D \in \cat{T} $,
            we have $ C, D \in I $.
        \item
            \label{3fcd62e277}
            For any $ C \in \cat{T} $ and $ D \in I $, we have $ C \otimes D \in I $.
        \item
            \label{f0e5bd7e43}
            If $ C \in \cat{T} $ satisfies
            $ C^{\otimes k} \in I $ for some $ k \geq 0 $, we have $ C \in I $.
    \end{enumerate}
    We denote the smallest radical ideal containing an object $ C \in \cat{T} $
    by~$ \sqrt{C} $.
\end{definition}

We note that the notion of
radical ideal of a tt-$ \infty $-category~$ \cat{T}^{\otimes} $
only depends on the underlying tensor triangulated category~$ (\h\cat{T})^{\otimes} $.

\begin{definition}\label{c167a286d1}
    Let $ \cat{T}^{\otimes} $ be a tt-$ \infty $-category.
    A \emph{support} for~$ \cat{T}^{\otimes} $ is a pair $ (L,s) $ of
    a distributive lattice~$ L $ and a function $ s \colon \cat{T} \to L $
    satisfying the following:
    \begin{enumerate}[start=0]
        \item
            \label{064da024a5}
            The function $ s $ takes the same values
            on equivalent objects.
            Hence we can evaluate $ s(C) $ even if $ C $
            is only determined up to equivalence.
        \item
            \label{efc38747ae}
            For $ C_1, \dotsc, C_n \in \cat{T} $, we have $
                s(C_1 \oplus \dotsb \oplus C_n)
                = s(C_1) \vee \dotsb \vee s(C_n)
            $.
            In particular, we have $ s(0) = 0 $.
        \item
            \label{bd772585de}
            For any cofiber sequence $ C' \to C \to C'' $ in~$ \cat{T} $,
            we have $ s(C') \vee s(C) = s(C) \vee s(C'') = s(C'') \vee s(C') $.
        \item
            \label{dbba1d4dc6}
            For $ C_1, \dotsc, C_n \in \cat{T} $, we have $
                s(C_1 \otimes \dotsb \otimes C_n)
                = s(C_1) \wedge \dotsb \wedge s(C_n)
            $.
            In particular, we have $ s(\unit) = 1 $,
            where $ \unit $ denotes the unit.
    \end{enumerate}
    They form a category,
    with morphisms $ (L,s) \to (L',s') $
    defined to be morphisms
    of distributive lattices $ f \colon L \to L' $ satisfying
    $ f \circ s = s' $.
\end{definition}

\begin{remark}\label{29b9ac4642}
    The notion of support introduced here
    is different from what is called ``support on~$ \cat{T} $''
    in \autocite[Definition~3.2.1]{KockPitsch17}, which values in a frame.
\end{remark}

\begin{definition}\label{88cd5e7146}
    The \emph{Zariski lattice} $ \Zar(\cat{T}) $
    of a tt-$ \infty $-category~$ \cat{T}^{\otimes} $ is
    the partially ordered set
    $ \bigl\{\sqrt{C} \mathrel{\bigl|} C \in \cat{T}\bigr\} $
    ordered by inclusion.
\end{definition}

\begin{proposition}\label{1745d1eb17}
    For a tt-$ \infty $-category $ \cat{T}^{\otimes} $,
    the following assertions hold:
    \begin{enumerate}
        \item
            \label{9061e8b182}
            The Zariski lattice $ \Zar(\cat{T}) $ is a distributive lattice.
        \item
            \label{9cf46b828c}
            The pair $ \bigl(\Zar(\cat{T}), C \mapsto \sqrt{C}\bigr) $
            is a support for~$ \cat{T}^{\otimes} $.
        \item
            \label{d7fcfff637}
            It is an initial support; that is,
            an initial object of the category described in \cref{c167a286d1}.
    \end{enumerate}
\end{proposition}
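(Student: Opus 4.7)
The plan is to describe explicitly the joins and meets in $\Zar(\cat{T})$, namely $\sqrt{C} \vee \sqrt{D} = \sqrt{C \oplus D}$ and $\sqrt{C} \wedge \sqrt{D} = \sqrt{C \otimes D}$. The join formula is essentially built into the definition: any radical ideal containing $C$ and $D$ also contains $C \oplus D$ by closure under finite direct sums, and conversely any radical ideal containing $C \oplus D$ contains each summand by \cref{9d9ed4c715}. Granting the meet formula for the moment, distributivity reduces to $C \otimes (D \oplus E) \simeq (C \otimes D) \oplus (C \otimes E)$, which yields \cref{9061e8b182}; here $\sqrt{0}$ is the bottom and $\sqrt{\unit} = \cat{T}$ the top. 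The support axioms in \cref{9cf46b828c} are then immediate, with one exception: axiom \cref{bd772585de} for a cofiber sequence $C' \to C \to C''$ follows by noting that each of the radical ideals $\sqrt{C' \oplus C}$, $\sqrt{C \oplus C''}$, $\sqrt{C' \oplus C''}$ is stable and so contains all three of $C'$, $C$, and $C''$, forcing them to coincide.

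The main obstacle is the nontrivial inclusion $\sqrt{C} \cap \sqrt{D} \subset \sqrt{C \otimes D}$ in the meet formula. To handle it, fix $E \in \sqrt{C} \cap \sqrt{D}$ and proceed in two steps. First, consider $J := \{F \in \cat{T} : F \otimes D \in \sqrt{C \otimes D}\}$ and check that it is a radical ideal: stability and closure under summands come from exactness of $- \otimes D$ together with the corresponding axioms for $\sqrt{C \otimes D}$; the tensor-ideal axiom is direct; and the radical axiom uses $(F \otimes D)^{\otimes k} \simeq F^{\otimes k} \otimes D^{\otimes k}$ combined with the tensor-ideal property of $\sqrt{C \otimes D}$. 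Since $C \in J$, we get $\sqrt{C} \subset J$, and in particular $E \otimes D \in \sqrt{C \otimes D}$. Next, introduce $K := \{G \in \cat{T} : E \otimes G \in \sqrt{C \otimes D}\}$; an analogous verification (using the braiding from the $\E_2$-structure) shows $K$ is a radical ideal, and $D \in K$ by the previous step, so $\sqrt{D} \subset K$ gives $E \otimes E \in \sqrt{C \otimes D}$. The radical axiom \cref{f0e5bd7e43} then yields $E \in \sqrt{C \otimes D}$.

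For initiality in \cref{d7fcfff637}, any morphism from $\bigl(\Zar(\cat{T}), C \mapsto \sqrt{C}\bigr)$ to a support $(L, s)$ is forced to send $\sqrt{C}$ to $s(C)$, so uniqueness is immediate. For well-definedness and monotonicity, fix $D$ and consider $I_D := \{E \in \cat{T} : s(E) \leq s(D)\}$. The radical-ideal axioms for $I_D$ follow directly from the support axioms: stability and closure under summands come from \cref{efc38747ae,bd772585de}; the tensor-ideal axiom comes from \cref{dbba1d4dc6}; and the radical axiom comes from the idempotency $s(C^{\otimes k}) = s(C)$ implied by \cref{dbba1d4dc6}. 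Since $D \in I_D$, we obtain $\sqrt{D} \subset I_D$, which gives $\sqrt{C} \subset \sqrt{D} \Rightarrow s(C) \leq s(D)$. Preservation of meet and join then follows from the formulas above together with \cref{efc38747ae,dbba1d4dc6}, while preservation of top and bottom is immediate from $s(\unit) = 1$ and $s(0) = 0$.
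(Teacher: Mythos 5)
The proposal is correct, and it follows a genuinely different route from the paper. The paper's proof simply invokes the results of Kock--Pitsch (Theorem~3.1.9 on the coherent frame of radical ideals, plus Lemma~3.2.2 and Theorem~3.2.3 for the support and initiality claims), after explicitly remarking that ``this can be proven directly.'' Your proposal is precisely that direct proof, carried out from scratch: you establish the explicit formulas $\sqrt{C} \vee \sqrt{D} = \sqrt{C \oplus D}$ and $\sqrt{C} \wedge \sqrt{D} = \sqrt{C \otimes D}$ (the nontrivial inclusion in the meet formula via the two auxiliary radical ideals $J$ and $K$, using the braiding to commute the factors in $(F \otimes D)^{\otimes k}$), deduce distributivity from $\otimes$ distributing over $\oplus$, handle the triangle axiom by observing that $\sqrt{C' \oplus C}$, $\sqrt{C \oplus C''}$, $\sqrt{C' \oplus C''}$ all coincide (each is stable and so contains the third vertex of the cofiber sequence), and obtain initiality by checking that $\{E : s(E) \leq s(D)\}$ is a radical ideal containing~$D$. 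What the paper's version buys is brevity and a pointer to a more systematic frame-theoretic development (which also supplies the definition of $\Spc$ used in \cref{41f1f4f38a}); what your version buys is self-containedness and makes visible that the only genuinely nontrivial step is $\sqrt{C} \cap \sqrt{D} \subset \sqrt{C \otimes D}$, where the radical condition is essential. Both arguments operate at the level of the underlying (braided) tensor triangulated category, which is all that the statement depends on.
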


Although this can be proven directly,
here we give a proof using
several results obtained in \autocite[Section~3]{KockPitsch17}.

\begin{proof}
    According to \autocite[Theorem~3.1.9]{KockPitsch17},
    all radical ideals of~$ \cat{T}^{\otimes} $ form a coherent frame
    by inclusion
    and its compact objects are precisely the elements of $ \Zar(\cat{T}) $,
    so \cref{9061e8b182} holds.
    Also, \cref{9cf46b828c} follows from this observation,
    together with \autocite[Lemma~3.2.2]{KockPitsch17}.
    Assertion \cref{d7fcfff637} follows
    from essentially the same argument as that
    given in the proof of \autocite[Theorem~3.2.3]{KockPitsch17}.
\end{proof}

\begin{remark}\label{05648f71d8}
    This construction determines a functor
    $ {\Zar} \colon \Alg_{\E_2}(\Cat{Cat}_{\infty}^{\perf}) \to \Cat{DLat} $.
\end{remark}

Now we can give a definition
of the Balmer spectrum in this paper;
this is equivalent to the original definition
by \autocite[Theorem~3.1.9 and Corollary~3.4.2]{KockPitsch17}.

\begin{definition}\label{41f1f4f38a}
    For a tt-$ \infty $-category~$ \cat{T}^{\otimes} $,
    we let $ \Spc(\cat{T}) $ denote
    the coherent topological space $ \Spec(\Zar(\cat{T})^{\op}) $
    and call it the \emph{Balmer spectrum} of~$ \cat{T}^{\otimes} $.
\end{definition}

We conclude this subsection by
proving a property of the functor~$ \Zar $.

\begin{lemma}\label{continuity}
    The functor $ \Zar \colon
    \Alg_{\E_2}(\Cat{Cat}_{\infty}^{\perf}) \to \Cat{DLat} $
    preserves filtered colimits.
\end{lemma}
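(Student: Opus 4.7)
The plan is to verify that $L_{\infty} := \injlim_{\alpha} \Zar(\cat{T}_{\alpha})$ (computed in $\Cat{DLat}$, where filtered colimits can be taken on underlying sets since distributive lattices form a variety for a finitary algebraic theory) is canonically identified with $\Zar(\cat{T})$, where $\cat{T}^{\otimes} := \injlim_{\alpha} \cat{T}_{\alpha}^{\otimes}$. By \cref{d7fcfff637} of \cref{1745d1eb17}, it suffices to equip $L_{\infty}$ with a natural support map $s_{\infty} \colon \cat{T} \to L_{\infty}$ and to check that $(L_{\infty}, s_{\infty})$ is an initial support on~$\cat{T}^{\otimes}$.

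First I would note that filtered colimits in $\Alg_{\E_2}(\Cat{Cat}_{\infty}^{\perf})$ are compatible with the forgetful functor to $\Cat{Cat}_{\infty}$: stability is preserved under filtered colimits of exact functors, and idempotent completeness is also preserved along a filtered diagram because any idempotent in the colimit is detected and split at a finite stage. Consequently, every object of~$\cat{T}$ lifts to some~$\cat{T}_{\alpha}$, and more generally any finite collection of objects of~$\cat{T}$, together with finitely many cofiber sequences or tensor-product comparisons among them, admits a coherent lift to a common stage~$\cat{T}_{\alpha}$.

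Next I would define $s_{\infty}$ by sending $C \in \cat{T}$ to the image of $\sqrt{C_{\alpha}}$ in $L_{\infty}$ for any lift $C_{\alpha} \in \cat{T}_{\alpha}$. Since each transition functor $F \colon \cat{T}_{\alpha} \to \cat{T}_{\beta}$ induces a DLat-morphism carrying $\sqrt{C_{\alpha}}$ to $\sqrt{F(C_{\alpha})}$ (by the functoriality of \cref{05648f71d8}), filteredness ensures that $s_{\infty}(C)$ is independent of the chosen lift. Each of the support axioms \cref{064da024a5}--\cref{dbba1d4dc6} of \cref{c167a286d1} involves only finitely many objects and morphisms, so after lifting the relevant data coherently to a single~$\cat{T}_{\alpha}$, the axiom reduces to the corresponding one for the support $\bigl(\Zar(\cat{T}_{\alpha}), C_{\alpha} \mapsto \sqrt{C_{\alpha}}\bigr)$ furnished by \cref{9cf46b828c} of \cref{1745d1eb17}.

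To verify initiality, given any support $(L, s)$ on~$\cat{T}^{\otimes}$, precomposition with $\cat{T}_{\alpha} \to \cat{T}$ yields a support on each~$\cat{T}_{\alpha}^{\otimes}$, and the initiality of~$\Zar(\cat{T}_{\alpha})$ produces a unique DLat-morphism $\Zar(\cat{T}_{\alpha}) \to L$; the same uniqueness forces compatibility with the transition maps, so these assemble into a DLat-morphism $L_{\infty} \to L$ that is compatible with $s_{\infty}$, and uniqueness follows stagewise. The main technical obstacle is the coherent-lifting claim for finite diagrams in the filtered colimit~$\cat{T}$, which rests on standard but nontrivial facts about filtered colimits in $\Cat{Cat}_{\infty}$; once this is secured, the remaining verifications are routine diagram chases enabled by filteredness.
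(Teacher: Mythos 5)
Your proof is correct and follows essentially the same route as the paper: both reduce to the observation that the colimit is computed at the level of underlying $\infty$-categories (via the forgetful functors from $\Alg_{\E_2}(\Cat{Cat}_{\infty}^{\perf})$), and both exploit the fact that the support axioms involve only finitely many objects and cofiber sequences, so a function $\cat{T} \to L$ is a support exactly when its restriction to each $\cat{T}_\alpha$ is, which forces the initial supports to agree. The paper compresses this to a single sentence invoking \cite[Corollary~3.2.2.5]{HA} and the definition of support, while you unpack the same argument into the explicit construction of $s_\infty$ and the initiality check; the content is the same.
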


The classical version of this result
is \autocite[Proposition~8.2]{Gallauer18},
which Gallauer proved as a corollary of a more general result there.
This lemma might be seen as a consequence of its variants,
but we here give a different proof using supports.

\begin{proof}
    Suppose that $ I $ is a directed poset
    and that $ \cat{T}^{\otimes} $ is the colimit of a diagram
    $ I \to \Alg_{\E_2}(\Cat{Cat}_{\infty}^{\perf}) $,
    which maps $ i $ to~$ \cat{T}_i^{\otimes} $.
    We wish to show that the morphism $ \injlim_i \Zar(\cat{T}_i)
    \to \Zar(\cat{T}) $ is an equivalence.
    By \autocite[Corollary~3.2.2.5]{HA}
    and the fact that the (nonfull)
    inclusion $ \Cat{Cat}_{\infty}^{\perf} \to \Cat{Cat}_{\infty} $
    preserves filtered colimits,
    $ \cat{T} $ is the colimit of the composite
    $ I \to \Alg_{\E_2}(\Cat{Cat}_{\infty}^{\perf}) \to \Cat{Cat}_{\infty} $.
    Hence it suffices to prove that
    a function from~$ \cat{T} $ to a distributive lattice~$ L $
    is a support
    if the composite
    $ \cat{T}_i \to \cat{T} \to L $ is a support for each $ i $.
    This follows from the definition of a support.
\end{proof}

\subsection{Tensorless tensor triangular geometry}\label{s_tensorless}

In this subsection, we develop
the ``tensorless'' counterpart of the theory
described in the previous section.
This is used in \cref{s_21e9cca7d2}.

First recall that an upper semilattice is
a poset that has finite joins.
A morphism between upper semilattices
is defined to be a function that preserves finite joins.
We let $ \Cat{SLat} $ denote the category
of upper semilattices.

\begin{definition}\label{3c914c1835}
    Suppose that $ \cat{T} $ is an idempotent complete stable $ \infty $-category.
    \begin{enumerate}
        \item
            A \emph{semisupport} for~$ \cat{T} $ is a pair $ (U,s) $ of
            an upper semilattice~$ U $ and a function
            $ s \colon \cat{T} \to U $ satisfying
            conditions \cref{064da024a5,efc38747ae,bd772585de} of \cref{c167a286d1},
            which make sense in this situation.
        \item
            A \emph{thick subcategory} of~$ \cat{T} $
            is an idempotent complete stable full replete subcategory of~$ \cat{T} $.
            It is called \emph{principal} if it is generated,
            as a thick subcategory, by one object.
    \end{enumerate}
\end{definition}

The following is the counterpart of \cref{1745d1eb17}
for semisupports:

\begin{proposition}\label{3232330989}
    For an idempotent complete stable $ \infty $-category $ \cat{T} $,
    the set of
    principal thick subcategories of~$ \cat{T} $
    ordered by inclusion is a semilattice,
    which is (the target of) the initial semisupport.
\end{proposition}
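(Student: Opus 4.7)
Write $\langle C \rangle$ for the thick subcategory of $\cat{T}$ generated by an object $C$, and $Z$ for the poset of all such principal thick subcategories, ordered by inclusion. The plan is to verify directly that $Z$ is an upper semilattice, that $s_Z \colon \cat{T} \to Z$ with $s_Z(C) = \langle C \rangle$ is a semisupport, and that it is initial by showing any semisupport factors uniquely through it.

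First I would check that inside the poset of all thick subcategories of $\cat{T}$, the join $\langle C \rangle \vee \langle D \rangle$ coincides with $\langle C \oplus D \rangle$: the former contains $C \oplus D$ because thick subcategories are stable under finite coproducts, and the latter contains $C$ and $D$ because thick subcategories are stable under retracts. The bottom element is $\langle 0 \rangle$. This immediately yields conditions \cref{064da024a5,efc38747ae} of \cref{c167a286d1} for $s_Z$. For condition \cref{bd772585de}, in a cofiber sequence $C' \to C \to C''$ any two of the three terms generate the third as a thick subcategory (via cofiber, fiber, or the shifted cofiber sequence $C''[-1] \to C' \to C$), so the three joins $\langle C' \rangle \vee \langle C \rangle$, $\langle C \rangle \vee \langle C'' \rangle$, and $\langle C'' \rangle \vee \langle C' \rangle$ all coincide.

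For initiality, fix a semisupport $(U, s)$. The equation $f \circ s_Z = s$ forces the unique candidate $f \colon Z \to U$ to be $\langle C \rangle \mapsto s(C)$. The crux is the well-definedness claim: $D \in \langle C \rangle$ should imply $s(D) \leq s(C)$. I would prove this by showing that $\cat{T}_C := \{D \in \cat{T} : s(D) \leq s(C)\}$ is a thick subcategory containing $C$. Closure under shifts follows from condition \cref{bd772585de} applied to $D \to 0 \to D[1]$, which yields $s(D) = s(D[1])$. Closure under (co)fibers is a direct application of condition \cref{bd772585de}. Closure under retracts uses condition \cref{efc38747ae} together with the fact that a retract $D$ of $E$ in an idempotent complete stable $\infty$-category splits as a direct summand $E \simeq D \oplus F$, giving $s(D) \leq s(D) \vee s(F) = s(E)$. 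Once thickness is established, $\langle C \rangle \subseteq \cat{T}_C$, as desired. The resulting $f$ preserves the bottom and binary joins by direct computation from condition \cref{efc38747ae}.

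The main obstacle is likely the retract step in showing $\cat{T}_C$ is thick: one has to invoke carefully the splitting of retracts in idempotent complete stable $\infty$-categories so that condition \cref{efc38747ae} applies in the required form. Once that is in place, the remaining verifications are purely formal bookkeeping with the semisupport axioms.
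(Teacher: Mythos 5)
Your proof is correct and takes essentially the same route as the paper: the observation that joins in $Z$ are computed by direct sums, together with the fact that $\{D \in \cat{T} : s(D) \leq s(C)\}$ is a thick subcategory, are exactly the two ingredients the paper uses. The paper isolates the latter as \cref{36af6900ff} and then invokes it, whereas you prove it inline; otherwise the arguments coincide.
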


\begin{lemma}\label{36af6900ff}
    For any semisupport $ (U,s) $ and any object $ C \in \cat{T} $,
    the full subcategory $ I \subset \cat{C} $ spanned
    by objects $ D $ satisfying $ s(D) \leq s(C) $
    is a thick subcategory of~$ \cat{T} $.
\end{lemma}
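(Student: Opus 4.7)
The plan is to verify directly from the semisupport axioms of \cref{c167a286d1} that $I$ is closed under the four operations defining a thick subcategory: containing the zero object, closure under $\Sigma^{\pm 1}$, closure under cofibers (hence under all finite limits and colimits in the stable setting), and closure under retracts. Repleteness is built into condition \cref{064da024a5}.

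The easy cases dispatch quickly. Containment of $0$ follows from applying condition \cref{efc38747ae} to the empty direct sum, giving $s(0) = 0 \leq s(C)$. Closure under $\Sigma^{\pm 1}$ comes from applying the cofiber-sequence condition \cref{bd772585de} to $D \to 0 \to \Sigma D$ (and its desuspended version), which forces $s(\Sigma^{\pm 1} D) = s(D)$. For idempotent completeness, if $D$ is a retract of some $D' \in I$ in the ambient idempotent complete stable $\infty$-category, the splitting $D' \simeq D \oplus E$ combined with condition \cref{efc38747ae} yields $s(D) \leq s(D) \vee s(E) = s(D') \leq s(C)$.

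The step that carries what little content the proof has is closure under cofibers. Given a cofiber sequence $D' \to D \to D''$ with $D', D'' \in I$, condition \cref{bd772585de} produces the three-way equality $s(D') \vee s(D) = s(D) \vee s(D'') = s(D'') \vee s(D')$, and comparing the middle and right expressions yields $s(D) \leq s(D'') \vee s(D') \leq s(C)$. The main (and essentially only) subtle point is that both halves of the three-way equality must be used to extract an upper bound on $s(D)$, rather than merely expressing $s(D)$ as a join; everywhere else the argument is just unpacking a single semisupport axiom verbatim.
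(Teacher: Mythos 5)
Your proof is correct and follows essentially the same approach as the paper's: verify each closure property of a thick subcategory directly from the semisupport axioms, with the cofiber case handled by reading the upper bound $s(D) \leq s(D') \vee s(D'')$ off the three-way join equality in condition \cref{bd772585de}. The paper states this more tersely, but you have simply unpacked the same steps.
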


\begin{proof}
    In this proof, we refer to the conditions given in \cref{c167a286d1}.
    From \cref{064da024a5} we have that $ I $ is a full replete subcategory.
    Condition~\cref{efc38747ae} implies $ 0 \in I $
    and \cref{bd772585de} implies that $ I $
    is closed under shifts and (co)fibers. Hence $ I $
    is a stable subcategory.
    Also, from \cref{efc38747ae} we have that $ I $ is idempotent complete,
    which completes the proof.
\end{proof}

\begin{proof}[Proof of \cref{3232330989}]
    For $ C_1, \dotsc, C_n \in \cat{T} $,
    it is easy to see that the join of
    $ \langle C_1 \rangle $, \dots,~$ \langle C_n \rangle $
    can be computed as $ \langle C_1 \oplus \dotsb \oplus C_n \rangle $,
    where $ \langle C \rangle $ denotes the thick subcategory of~$ \cat{T} $
    generated by an object $ C \in \cat{T} $.
    Hence it suffices to show
    that if objects $ C, D \in \cat{T} $ satisfy
    $ s(C) = s(D) $ for some semisupport~$ s $,
    they generate the same thick subcategory.
    This follows from \cref{36af6900ff}.
\end{proof}

\begin{remark}\label{6bae21b07b}
    We can also prove the tensorless counterpart
    of \cref{continuity} by the same argument.
\end{remark}

We state a well-known concrete description of
the free functor $ {\Free} \colon \Cat{SLat} \to \Cat{DLat} $,
which is defined as the left adjoint of the forgetful functor.

\begin{lemma}\label{eb431803ca}
    For an upper semilattice~$ U $,
    we let $ \P(U) $ denote the power set of~$ U $
    ordered by inclusion.
    Then the morphism $ U \to \P(U) $
    that maps $ u $ to $ \{v \in U \mid u \nleq v\} $
    induces a monomorphism of distributive lattices
    $ \Free(U) \hookrightarrow \P(U) $.
\end{lemma}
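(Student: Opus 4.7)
The plan is as follows. I first check that the assignment $u \mapsto A_u := \{v \in U \mid u \nleq v\}$ is a morphism of upper semilattices $U \to \P(U)$: we have $A_0 = \emptyset$ since $0 \leq v$ always, and $A_{u_1 \vee u_2} = A_{u_1} \cup A_{u_2}$ because $u_1 \vee u_2 \leq v$ if and only if $u_1 \leq v$ and $u_2 \leq v$. By the universal property of $\Free$, this extends uniquely to a morphism of distributive lattices $\Phi \colon \Free(U) \to \P(U)$; the content of the lemma is its injectivity.

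Since $\Free(U)$ is generated as a distributive lattice by the image of $U$, distributivity guarantees that every element admits a DNF presentation $f = \bigvee_{k=1}^n \bigwedge_{j \in S_k} u_{k,j}$ with finite $S_k \subset U$, and a direct computation yields $\Phi(f) = \bigcup_k \{v \in U \mid u_{k,j} \nleq v \text{ for every } j \in S_k\}$. Equivalently, membership of $v$ in $\Phi(f)$ coincides with the value of $f$ under the lattice morphism $\chi_v \colon \Free(U) \to 2$ extending the semilattice morphism $u \mapsto [u \nleq v]$. So the injectivity of $\Phi$ is equivalent to the statement that the family $\{\chi_v\}_{v \in U}$ separates points of $\Free(U)$.

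To prove separation, I suppose $f \nleq g$ in $\Free(U)$. By Stone duality, there is a lattice morphism $\phi \colon \Free(U) \to 2$ with $\phi(f) = 1$ and $\phi(g) = 0$; by the universal property of $\Free$, this corresponds to a semilattice morphism $\psi \colon U \to 2$, whose zero set $I := \psi^{-1}(0)$ is an ideal of $U$. Writing $f$ and $g$ in DNF as $\bigvee_k \bigwedge_{j \in S_k} u_{k,j}$ and $\bigvee_l \bigwedge_{i \in T_l} v_{l,i}$, the hypothesis $\phi(f) = 1$ selects an index $k_0$ with $u_{k_0,j} \notin I$ for all $j \in S_{k_0}$, and $\phi(g) = 0$ lets us choose for each $l$ some $i_l \in T_l$ with $v_{l,i_l} \in I$. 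Since $U$ has finite joins and $I$ is closed under them, the element $v := \bigvee_l v_{l,i_l}$ still lies in $I$; then every $u_{k_0,j}$, being outside the downset $\{w \in U \mid w \leq v\} \subseteq I$, satisfies $u_{k_0,j} \nleq v$ and so $\chi_v(f) = 1$, while every $v_{l,i_l} \leq v$ forces $\chi_v(g) = 0$. This gives $v \in \Phi(f) \setminus \Phi(g)$, as required.

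The main obstacle is this last step: Stone duality only supplies a semilattice morphism $U \to 2$ corresponding to some possibly large ideal $I$, whereas the points of $\P(U)$ only see principal ideals. The key trick is that the finiteness of the DNFs of $f$ and $g$ limits the relevant data to finitely many elements of $U$, and the join-closure of $U$ then lets us collapse $I$ to the principal sub-ideal generated by the finitely many witnesses $v_{l,i_l}$—which is enough because the witnesses $u_{k_0,j}$ for $f$ remained outside the original $I$.
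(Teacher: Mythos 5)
Your proof is correct. The paper states this lemma without proof, labeling it ``a well-known concrete description,'' so there is nothing to compare against; your argument is a clean and complete one. The reduction to separation by the characters $\chi_v$, the appeal to the prime ideal theorem to produce a general separating character with associated ideal $I$, and the collapse of $I$ to the principal ideal generated by $v = \bigvee_l v_{l,i_l}$ (valid precisely because $I$ is join-closed and downward closed, while the finitely many generators $u_{k_0,j}$ of the winning disjunct of $f$ remain outside) together form exactly the right argument. Two tiny points you could make explicit: the hypothesis $\phi(g)=0$ forces every conjunct set $T_l$ to be nonempty, so the choice of $i_l$ is always possible; and the downward closure of $I=\psi^{-1}(0)$ follows because semilattice morphisms are monotone. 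Neither is a gap.
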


\section{Tensor triangular geometry of sheaves}\label{s_d6db2a7fad}

In this section, we prove \cref{43db5adc0d},
which is stated in \cref{s_intro}.

\subsection{Tensor triangular geometry of the pointwise monoidal structure}\label{ss_fin-acyc}

First we define a class of categories.
Beware that there are other usages of the word ``acyclic category''
in the literature.

\begin{definition}\label{1080cba1bc}
    An (ordinary) category is called \emph{acyclic}
    if only identity morphisms are
    isomorphisms or endomorphisms in it.
\end{definition}

\begin{example}\label{9e04549152}
    Any poset, considered as a category, is an acyclic category.
\end{example}

Note that every finite acyclic category is $ \omega $-finite
and locally $ \omega $-compact
as an $ \infty $-category,
so that we can apply \cref{1f1e4383ed,5a932f09f2}.

\begin{theorem}\label{79ff4ffbd8}
    Let $ \cat{C}^{\otimes} $ be a big tt-$ \infty $-category and
    $ K $ a finite acyclic category.
    Then we have a canonical isomorphism
    \begin{equation*}
        \Zar(\Fun(K, \cat{C})^{\omega})
        \simeq \Zar(\cat{C}^{\omega})^{K_0},
    \end{equation*}
    where the right hand side denotes
    the power of $ \Zar(\cat{C}^{\omega}) $
    indexed by the set of objects of~$ K $,
    computed in the category $ \Cat{DLat} $.
\end{theorem}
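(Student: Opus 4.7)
The plan is to induct on $|K_0|$. The candidate map is $\phi \colon \Zar(\Fun(K, \cat{C})^{\omega}) \to \Zar(\cat{C}^{\omega})^{K_0}$ sending $\sqrt{F}$ to $(\sqrt{F(k)})_{k \in K_0}$; each component is induced by the evaluation functor at $k$, which is exact and $\E_2$-monoidal for the pointwise structure, so $\phi$ is a morphism of distributive lattices by the functoriality of $\Zar$ (\cref{05648f71d8}). By \cref{1f1e4383ed}, applicable since any finite acyclic category is $\omega$-small and locally $\omega$-compact, compact objects of $\Fun(K, \cat{C})$ are precisely the pointwise-compact functors. The base cases $|K_0| \leq 1$ are trivial.

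For $|K_0| \geq 2$, pick $k \in K$ maximal (which exists by finiteness and acyclicity) and set $K' = K \setminus \{k\}$, so that $\{k\}$ is a cosieve. By \cref{883b124c64}, $\Fun(K, \cat{C})$ is a recollement in which $i_* \colon \Fun(K', \cat{C}) \hookrightarrow \Fun(K, \cat{C})$ is extension by zero at $k$ and $j^*$ is evaluation at $k$; the left adjoint $j_!$ is the left Kan extension along $\{k\} \hookrightarrow K$, which by maximality of $k$ concretely sends $C$ to the literal skyscraper $S_k(C)$ with value $C$ at $k$ and $0$ elsewhere. Both $i_*$ and $S_k$ preserve compactness.

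Set $\alpha = S_k(\unit_{\cat{C}})$ and $\beta = i_*(\unit_{K'})$ in $\Fun(K, \cat{C})^{\omega}$. The recollement gluing cofiber sequence applied to $\unit_K$ reads $\alpha \to \unit_K \to \beta$, so the support axiom \cref{bd772585de} forces $\sqrt{\alpha} \vee \sqrt{\beta} = 1$ in $L = \Zar(\Fun(K, \cat{C})^{\omega})$. A pointwise computation gives $\alpha \otimes \beta = 0$, hence $\sqrt{\alpha} \wedge \sqrt{\beta} = 0$. Thus $a = \sqrt{\alpha}$ and $b = \sqrt{\beta}$ are complementary in $L$, producing a canonical decomposition $L \simeq [0, a] \times [0, b]$ of distributive lattices via $x \mapsto (x \wedge a, x \wedge b)$.

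Tensoring the cofiber sequence above with any compact $F$ yields $S_k(F(k)) \to F \to i_*(F|_{K'})$, via the pointwise identities $F \otimes \alpha = S_k(F(k))$ and $F \otimes \beta = i_*(F|_{K'})$. If $\sqrt{F} \leq a$, then $\sqrt{i_*(F|_{K'})} = \sqrt{F} \wedge b \leq a \wedge b = 0$, and axiom \cref{bd772585de} collapses the sequence to $\sqrt{F} = \sqrt{S_k(F(k))}$; this shows $\sqrt{F} \mapsto \sqrt{F(k)}$ is an isomorphism $[0, a] \simeq \Zar(\cat{C}^{\omega})$ with inverse $\sqrt{C} \mapsto \sqrt{S_k(C)}$. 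A symmetric argument, exchanging the roles of $a$ and $b$, identifies $[0, b] \simeq \Zar(\Fun(K', \cat{C})^{\omega})$ via restriction, which by induction equals $\Zar(\cat{C}^{\omega})^{K'_0}$. The composite isomorphism $L \simeq \Zar(\cat{C}^{\omega})^{K_0}$ then coincides with $\phi$. The main obstacle is establishing the complementarity of $a$ and $b$ together with the two factor identifications; both hinge on the pointwise tensor identities above, which encode how the recollement respects the pointwise monoidal structure.
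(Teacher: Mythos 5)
Your proof is correct, but it takes a genuinely different route from the paper's. The paper constructs both directions of the isomorphism directly on the full lattice: the inverse $f$ is glued from the constant-functor embedding $\Zar(\cat{C}^{\omega}) \to \Zar(\Fun(K,\cat{C})^{\omega})$ and a map $\P(K_0) \to \Zar(\Fun(K,\cat{C})^{\omega})$, $\{k\} \mapsto \sqrt{X(k)}$ (where $X(k)$ is the same skyscraper you call $S_k(\unit)$), with the crucial input being a single lemma (\cref{037927c088}) proved by a cofiber-sequence telescope along a linear ordering of the objects, showing $\bigvee_k \sqrt{X(k)} = 1$. You instead induct on $\lvert K_0\rvert$: you split off one maximal vertex at a time using the recollement of \cref{883b124c64}, observe that $a = \sqrt{\alpha}$ and $b = \sqrt{\beta}$ are complementary (which packages the same $\alpha \otimes \beta = 0$ observation and the same cofiber sequence $\alpha \to \unit \to \beta$), and then use the standard fact that complementary elements in a distributive lattice induce a product decomposition $L \simeq [0,a] \times [0,b]$, identifying each factor via $\sqrt{F} \mapsto \sqrt{F(k)}$ resp.\ restriction. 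What your version buys is that the two factor identifications are quite transparent (the inverse maps $\sqrt{C} \mapsto \sqrt{S_k(C)}$ and $\sqrt{G} \mapsto \sqrt{i_*G}$ are visibly supports valued in $[0,a]$ resp.\ $[0,b]$ since $S_k$ and $i_*$ are pointwise-monoidal), at the cost of running an induction; the paper's version handles all vertices simultaneously at the price of having to verify that the assignment $\{k\} \mapsto \sqrt{X(k)}$ extends to a lattice morphism on $\P(K_0)$. The underlying combinatorics is the same — a filtration of $\unit$ by cosieves with skyscraper graded pieces — so the two arguments should be viewed as reorganizations of one another rather than fundamentally distinct ideas.
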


\begin{remark}\label{d4bf78f8df}
    In the language of usual tensor triangular geometry,
    the conclusion of \cref{79ff4ffbd8} just says that
    the Balmer spectrum of $ \Fun(K, \cat{C})^{\omega} $
    is homeomorphic to that of~$ \cat{C}^{\omega} $
    to the power of the cardinality of objects of~$ K $.
\end{remark}

\begin{example}\label{b5665d1379}
    In the case $ \cat{C}^{\otimes} = \Cat{Mod}_k^{\otimes} $
    for some field~$ k $,
    this result is the special case of \autocite[Theorem~2.1.5.1]{LiuSierra13}
    when the quiver is not equipped with relations.
\end{example}

To give the proof of \cref{79ff4ffbd8},
we introduce a notation.

\begin{definition}[used only in this subsection]\label{552e96e6b3}
    In the situation of \cref{79ff4ffbd8},
    suppose that $ k $ is an object of~$ K $.
    We let $ K' $ denote the cosieve generated by~$ k $.
    Let $ X(k) \in \Fun(K, \cat{C}) $ denote
    the left Kan extension of the object of $ \Fun(K', \cat{C}) $
    which is obtained
    as the right Kan extension of the unit
    $ \unit \in \cat{C} \simeq \Fun(\{k\}, \cat{C}) $.
    This object satisfies $ X(k)(k) \simeq \unit $
    and $ X(k)(l) \simeq 0 $ for $ l \neq k $.
\end{definition}

\begin{lemma}\label{037927c088}
    In the situation of \cref{79ff4ffbd8},
    suppose that $ (L, s) $ is
    a support for $ (\Fun(K, \cat{C})^{\omega})^{\otimes} $.
    Then we have $ \bigvee_{k \in K} s(X(k)) = 1 $.
    In other words,
    the object $ \bigoplus_{k \in K} X(k) $
    generates $ \Fun(K, \cat{C})^{\omega} $ as a radical ideal.
\end{lemma}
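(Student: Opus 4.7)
My plan is to prove by induction on $|K|$ that the pointwise-monoidal unit $\underline{\unit} \in \Fun(K, \cat{C})$, namely the constant functor at $\unit \in \cat{C}$, lies in the thick tensor ideal of $\Fun(K, \cat{C})^{\omega}$ generated by $\{X(k)\}_{k \in K}$. Granted this, $\bigoplus_{k \in K} X(k)$ generates the whole category as a thick tensor ideal, hence also as a radical ideal, and for any support $(L, s)$ one gets $1 = s(\underline{\unit}) \leq s\bigl(\bigoplus_k X(k)\bigr) = \bigvee_k s(X(k))$ directly from the support axioms, with the reverse inequality automatic.

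The base case $|K| = 1$ is immediate since $X(k) \simeq \underline{\unit}$. For $|K| \geq 2$, choose a minimal object $k_0 \in K$ (which exists by finiteness and acyclicity) and set $K_1 := K \setminus \{k_0\}$. Minimality makes $K_1$ a cosieve in $K$, so \cref{883b124c64} realizes $\Fun(K, \cat{C})$ as a recollement of $i_* \colon \cat{C} \simeq \Fun(\{k_0\}, \cat{C}) \to \Fun(K, \cat{C})$ and $j_* \colon \Fun(K_1, \cat{C}) \to \Fun(K, \cat{C})$, where $i$ and $j$ denote the corresponding inclusions. The pointwise formulas for Kan extensions, combined with minimality of $k_0$, yield $i_*\unit \simeq X(k_0)$: both functors take value $\unit$ at $k_0$ and $0$ elsewhere, and any such functor is uniquely determined up to equivalence since $\Map_{\cat{C}}(0, \unit)$ and $\Map_{\cat{C}}(\unit, 0)$ are contractible. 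Applying the recollement cofiber sequence to $\underline{\unit}$ therefore yields
\[
    j_!\underline{\unit}_{K_1} \longrightarrow \underline{\unit} \longrightarrow X(k_0),
\]
reducing us to placing $j_!\underline{\unit}_{K_1}$ in the thick tensor ideal generated by $\{X(k)\}_{k \in K_1}$.

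By the inductive hypothesis applied to $K_1$, the constant-$\unit$ functor $\underline{\unit}_{K_1}$ lies in the thick tensor ideal of $\Fun(K_1, \cat{C})^{\omega}$ generated by $\{X_{K_1}(k)\}_{k \in K_1}$, where $X_{K_1}(k)$ is the analogue of $X(k)$ built inside $\Fun(K_1, \cat{C})$. The functor $j_!$ is exact, preserves compact objects (its right adjoint $j^*$ is restriction, hence preserves all colimits), satisfies the projection formula $H \otimes j_! G \simeq j_!(j^* H \otimes G)$ for the pointwise tensor (checkable at each point, since $k_0$ receives no non-identity morphism and so $(j_!G)(k_0) = 0$), and sends $X_{K_1}(k)$ to $X(k)$ for $k \in K_1$ by the same uniqueness argument as above. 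Applying $j_!$ to the ideal-theoretic expression for $\underline{\unit}_{K_1}$ then places $j_!\underline{\unit}_{K_1}$ in the desired ideal, completing the induction.

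The main subtlety is the pair of identifications $i_*\unit \simeq X(k_0)$ and $j_!X_{K_1}(k) \simeq X(k)$, together with the projection formula for $j_!$; all three reduce to computing Kan extensions pointwise along fully faithful (co)sieve inclusions, where acyclicity of $K$ and minimality of $k_0$ ensure that the relevant comma categories are empty except for identities.
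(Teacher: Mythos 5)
Your proof is correct, and it reaches the conclusion by a route that is recognizably different from the paper's even though both hinge on \cref{883b124c64}. The paper linearly orders the objects $k_1,\dots,k_n$ so that $K_i = \{k_1,\dots,k_i\}$ is a sieve, introduces the auxiliary objects $F_i$ defined as the \emph{right} Kan extension of $\unit\rvert_{K_i}$, and at each step applies the recollement with sieve $K_{i-1}$ to the object $F_i$, producing the cofiber sequence $X(k_i) \to F_i \to F_{i-1}$. The inequality $s(X(k_i)) \leq s(F_i)$ then falls out of the elementary pointwise identity $X(k_i) \simeq F_i \otimes X(k_i)$, and the argument closes by telescoping $s(F_n)=1$ down to $s(F_0)=0$ entirely within the support lattice~$L$. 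You instead fix a minimal $k_0$, always use the singleton sieve $\{k_0\}$, apply the recollement to $\underline{\unit}$ itself to get $j_!\underline{\unit}_{K_1} \to \underline{\unit} \to X(k_0)$, and push the inductive hypothesis forward along $j_!$; the auxiliary object $j_!\underline{\unit}_{K_1}$ is a \emph{left} Kan extension and is not one of the paper's $F_i$'s. This forces you to establish several compatibilities of $j_!$ (exactness, preservation of compacts, $j_!X_{K_1}(k)\simeq X(k)$, and the projection formula $H \otimes j_!G \simeq j_!(j^*H \otimes G)$ for the pointwise tensor) in order to carry the ideal membership across, where the paper needs only a one-line tensor observation. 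The trade-off is that your ideal-theoretic framing proves a slightly stronger, cleaner statement ($\underline{\unit}$ lies in the thick tensor ideal generated by the $X(k)$'s) and would survive verbatim in a tensorless variant where the generation is by a thick ideal rather than a radical one, at the cost of more setup; the paper's is more economical in machinery. Two small points worth tightening in your write-up: the phrase ``directly from the support axioms'' for $s(\underline{\unit}) \leq \bigvee_k s(X(k))$ actually needs the observation, analogous to \cref{36af6900ff}, that $\{D : s(D) \leq \bigvee_k s(X(k))\}$ is a thick tensor ideal; and the uniqueness-of-a-functor-with-prescribed-values claim is cleanest via the pointwise-equivalence unit map $F \to i_*i^*F$ rather than via contractibility of mapping spaces into and out of zero.
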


\begin{proof}
    First, we name objects of~$ K $
    as $ k_1 $, \dots, $ k_n $ so that
    we have $ \Hom_K(k_j,k_i) = \emptyset $ for any $ i < j $.
    This is possible since $ K $ is acyclic.
    For $ 0 \leq i \leq n $, let $ K_i $ denote
    the full subcategory of~$ K $ whose set of objects is $ \{k_j \mid j \leq i\} $.
    We write $ F_i \in \Fun(K, \cat{C})^{\omega} $ for the right Kan extension
    of $ \unit \rvert_{K_i} $, where $ \unit $ denotes
    the unit of $ \Fun(K, \cat{C})^{\otimes} $.

    We wish to prove $ s(F_i) = s(F_{i-1}) \vee s(X(k_i)) $
    for $ 1 \leq i \leq n $,
    which completes the proof since
    $ s(F_n) = s(\unit) = 1 $ and $ s(F_0) = s(0) = 0 $ holds.
    Now since $ K \setminus K_{i-1} $ is a cosieve,
    we get a cofiber sequence
    $ X(k_i) \to F_i \to F_{i-1} $
    by applying \cref{883b124c64}.
    Combining this with
    an equivalence $ X(k_i) \simeq F_i \otimes X(k_i) $,
    we obtain the desired equality.
\end{proof}

\begin{proof}[Proof of \cref{79ff4ffbd8}]
    Let $ \P(K_0) $ denote the power set of
    the set of objects of~$ K $ ordered by inclusion.
    Then there exists a canonical isomorphism
    $ \Zar(\cat{C}^{\omega}) \otimes \P(K_0)
    \simeq \Zar(\cat{C}^{\omega})^{K_0} $ of distributive lattices.

    First, we claim that there exists a (unique) morphism of distributive lattice
    $ \P(K_0) \to \Zar(\Fun(K, \cat{C})^{\omega}) $
    that maps $ \{k\} $ to $ \sqrt{X(k)} $
    for $ k \in K $.
    This follows from the following two observations:
    \begin{enumerate}
        \item
            For $ k \neq l $,
            we have $ X(k) \otimes X(l) \simeq 0 $;
            this can be checked pointwise.
        \item
            The object $ \bigoplus_{k \in K} X(k) $
            generates $ \Fun(K, \cat{C})^{\omega} $ as
            a radical ideal;
            this is the content of \cref{037927c088}.
    \end{enumerate}
    Combining this morphism with the one
    $ \Zar(\cat{C}^{\omega})
    \to \Zar(\Fun(K, \cat{C})^{\omega}) $
    induced by the functor $ K \to {*} $,
    we obtain a morphism $ f \colon \Zar(\cat{C}^{\omega}) \otimes \P(K_0)
    \to \Zar(\Fun(K, \cat{C})^{\omega}) $.

    For each $ k \in K $, the inclusion $ \{k\} \hookrightarrow K $
    induces the morphism
    $ \Zar(\Fun(K, \cat{C})^{\omega}) \to \Zar(\Fun(\{k\}, \cat{C})^{\omega})
    \simeq \Zar(\cat{C}) $.
    From them, we get a morphism
    $ g \colon
    \Zar(\Fun(K, \cat{C})^{\omega})
    \to \Zar(\cat{C}^{\omega})^{K_0}
    \simeq \Zar(\cat{C}^{\omega}) \otimes \P(K_0) $,
    where we use the identification described above.

    To complete the proof, we wish to prove that
    $ g \circ f $ and $ f \circ g $ are identities.
    By construction, it is easy to see that $ g \circ f $ is the identity,
    so it remains to prove the other claim.
    We take an object $ F \in \Fun(K, \cat{C})^{\omega} $
    and for $ k \in K $ let $ F_k $ denote the object of $ \Fun(K, \cat{C})^{\omega} $
    obtained by precomposing the functor
    $ K \to \{k\} \hookrightarrow K $ with~$ F $.
    Unwinding the definitions,
    the assertion $ (g \circ f)\bigl(\sqrt{F}\bigr)
    = \sqrt{F} $ is equivalent to the assertion that
    $ F $ and $ \bigoplus_{k \in K} F_k \otimes X(k) $ generate
    the same radical ideal.
    Since we have an equivalence $ F \otimes X(k) \simeq F_k \otimes X(k) $
    for each $ k \in K $, this follows from \cref{037927c088}.
\end{proof}

\subsection{Main result}\label{ss_shv-coh}

In order for \cref{43db5adc0d} to make sense,
we need to clarify what
the pointwise $ \E_2 $-monoidal structure on
$ \Shv_{\cat{C}}(X) $ is.

\begin{proposition}\label{9795957f03}
    For a coherent topological space~$ X $,
    the $ \infty $-topos $ \Shv(X) $ is compactly generated.
    Moreover, finite products of compact objects are again compact.
\end{proposition}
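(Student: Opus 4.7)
The plan is to identify $\Shv(X)$ with the $\infty$-category of sheaves on $\mathcal{K}(X)$, the distributive lattice of quasi-compact open subsets of $X$, equipped with the Grothendieck topology generated by finite jointly-surjective families. Coherence of $X$ ensures that $\mathcal{K}(X)$ is a basis for the topology closed under finite intersections and that every open cover of a quasi-compact open admits a finite subcover. The identification $\Shv(X) \simeq \Shv(\mathcal{K}(X))$ in the $\infty$-categorical setting is precisely the content of the basis theorem developed in \cref{s_basis}.

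Having reduced to $\Shv(\mathcal{K}(X))$, I would show that each sheafified representable $\ell_U$, for $U \in \mathcal{K}(X)$, is a compact object of $\Shv(X)$. Since $\Map_{\Shv(X)}(\ell_U, F) \simeq F(U)$, it suffices to verify that the evaluation functor $F \mapsto F(U)$ preserves filtered colimits of sheaves, which reduces to showing that the pointwise filtered colimit of sheaves on $\mathcal{K}(X)$ is again a sheaf. Because every covering family in $\mathcal{K}(X)$ can be refined to a finite one and $\mathcal{K}(X)$ is closed under finite meets, the descent data for each cover involve only finite products of values of $F$, and hence the sheaf condition survives filtered colimits of presheaves. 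Combined with the fact that every sheaf is a colimit of representables, the collection $\{\ell_U\}_{U \in \mathcal{K}(X)}$ then constitutes a set of compact generators, so $\Shv(X)$ is compactly generated.

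For the second assertion, the subcategory of compact objects coincides with the idempotent completion of the closure of $\{\ell_U\}_{U \in \mathcal{K}(X)}$ under finite colimits. Closure of $\mathcal{K}(X)$ under binary meets gives $\ell_U \times \ell_V \simeq \ell_{U \cap V}$, so the generating family is closed under binary products. Since $\Shv(X)$ is an $\infty$-topos, it is locally cartesian closed and the functor $\X \times Z$ preserves colimits in each variable for every $Z$; consequently, the full subcategory consisting of those objects $A$ such that $A \times B$ is compact for all compact $B$ contains the generators and is closed under finite colimits and retracts. An induction then yields that finite products of compact objects are compact.

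The main technical obstacle is verifying that filtered colimits of $\infty$-sheaves on the finitary site $\mathcal{K}(X)$ are computed objectwise --- equivalently, that the descent condition for finite covers commutes with filtered colimits of presheaves in the $\infty$-categorical setting --- together with the basis identification $\Shv(X) \simeq \Shv(\mathcal{K}(X))$; both of these ultimately rest on the machinery developed in \cref{s_basis}.
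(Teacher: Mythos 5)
Your proof is correct and takes a genuinely different route from the paper's. For the first assertion, the paper simply cites \cite[Proposition~6.5.4.4]{HTT}, whereas you give a direct argument that the sheafified representables $\ell_U$ ($U$ quasicompact open) are compact generators by showing evaluation at $U$ preserves filtered colimits. The ``technical obstacle'' you flag is real but surmountable: the sieve generated by a finite cover $\{U_1,\dotsc,U_n\}$ of $U$, although an infinite poset, is presented as a \emph{finite} colimit $\injlim_{\emptyset\neq S\subset\{1,\dotsc,n\}} j\bigl(\bigcap_{i\in S} U_i\bigr)$ in $\PShv$, so the descent condition $F(U)\xrightarrow{\sim}\Map_{\PShv}(\text{sieve},F)$ becomes a finite limit and commutes with filtered colimits. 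For the second assertion, the paper passes to $\PShv(L)$ via the sheafification functor (which preserves finite products and compacts), reducing to \cref{3752cd1622}, which in turn rests on the Day-convolution comparison \cref{e371b27143,2b52e3227f}; you instead work directly in $\Shv$, using that the $\ell_U$ are subterminal so that $\ell_U\times\ell_V\simeq\ell_{U\cap V}$, and close under finite colimits and retracts. Your ``an induction then yields'' should be spelled out as a two-step argument — first fix $U$ and show $\ell_U\times B$ is compact for all compact $B$ by closing up from the generators, then fix compact $B$ and run the same closure over $A$ — since as phrased the condition ``contains the generators'' already presupposes the first step. Both routes work; yours is more self-contained and exploits the subterminal structure of the site, while the paper's is shorter given the machinery of \cref{ss_611af07fdc}.
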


\begin{proof}
    The first assertion is the content of \autocite[Proposition~6.5.4.4]{HTT}.
    Let $ L $ denote the distributive lattice of quasicompact open subsets of~$ X $.
    We have $ \Shv(L) \simeq \Shv(X) $,
    where $ L $ is equipped with the induced Grothendieck
    topology (see \cref{b549dfa54c}).
    We let $ L' \colon \PShv(L) \to \Shv(L) $ denote
    the sheafification functor.
    It follows from the proof of \autocite[Proposition~6.5.4.4]{HTT}
    that $ \Shv(L)^{\omega} $ is the smallest full subcategory
    that contains the image of $ \PShv(L)^{\omega} $ under~$ L' $
    and is closed under finite colimits and retracts.
    Since finite products preserve (finite) colimits in each variable in $ \Shv(L) $
    and the functor~$ L' $ preserves finite products,
    it suffices to show that finite products of compact objects
    are again compact in $ \PShv(L) $.
    This follows from \cref{3752cd1622} since $ L $ has finite products.
\end{proof}

Using the equivalence $ \Shv_{\cat{C}}(X) \simeq \Shv(X) \otimes \cat{C} $
and \cref{8e9b197f4f}, we can equip
an $ \E_2 $-monoidal structure on $ \Shv_{\cat{C}}(X) $.

\begin{corollary}\label{03eaee5199}
    For a big tt-$ \infty $-category~$ \cat{C} $
    and a coherent topological space~$ X $,
    the $ \E_2 $-monoidal $ \infty $-category $ \Shv_{\cat{C}}(X)^{\otimes} $,
    defined as above,
    is a big tt-$ \infty $-category.
\end{corollary}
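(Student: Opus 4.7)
The plan is to verify each clause of \cref{04732c0097} for the $\E_2$-monoidal $\infty$-category $\Shv_{\cat{C}}(X)^{\otimes}$ constructed above. The bulk of the work is packaged by \cref{8e9b197f4f} applied to the symmetric monoidal category $\Cat{Pr}_{\omega}^{\otimes}$; I just need to exhibit both factors as appropriate algebras in $\Cat{Pr}_{\omega}$ before tensoring, and then separately account for stability.

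First I would equip $\Shv(X)$ with its cartesian $\E_{\infty}$-monoidal structure, which is available on any $\infty$-topos and whose tensor products preserve colimits separately in each variable. By \cref{9795957f03}, $\Shv(X)$ is $\omega$-compactly generated and finite products of compact objects are compact, so $\Shv(X)^{\times}$ lifts to an $\E_{\infty}$-algebra object of $\Cat{Pr}_{\omega}^{\otimes}$. Restricting along the operad map $\E_{\infty} \to \E_2$ and invoking \cref{8e9b197f4f} for $\kappa = \omega$, I can form the tensor product of $\Shv(X)^{\times}$ and $\cat{C}^{\otimes}$ in $\Alg_{\E_2}(\Cat{Pr}_{\omega})$. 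By \cref{dee93321da} of \cref{tensor} the underlying presentable $\infty$-category coincides with $\Shv(X) \otimes \cat{C} \simeq \Shv_{\cat{C}}(X)$ computed in $\Cat{Pr}$, and this is precisely the $\E_2$-monoidal structure specified above.

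Then I would read off the four defining properties in turn. Being $\omega$-compactly generated is immediate from membership in $\Cat{Pr}_{\omega}$. Stability follows because $\cat{C}$, being stable and presentable, is a module over $\Cat{Sp}$ in $\Cat{Pr}$; tensoring with any presentable $\infty$-category yields again a $\Cat{Sp}$-module, hence a stable presentable $\infty$-category. The $\E_2$-monoidal structure is built into the construction, its tensor products preserve colimits in each variable by \cref{ffd2ebeeb5} of \cref{tensor}, and the refinement to $\Cat{Pr}_{\omega}^{\otimes}$ (as opposed to merely $\Cat{Pr}^{\otimes}$) encodes exactly the condition that tensor products restrict to compact objects, via \cref{dee93321da} of \cref{tensor}.

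The main obstacle is bookkeeping: confirming that the $\E_2$-monoidal structure produced by the symmetric monoidal refinement of the forgetful functor in \cref{8e9b197f4f} genuinely coincides, on underlying $\infty$-categories, with the one we want on $\Shv_{\cat{C}}(X)$. Everything else is formal once \cref{9795957f03} is in hand, since it is exactly what is needed to promote $\Shv(X)^{\times}$ from an algebra in $\Cat{Pr}^{\otimes}$ to one in $\Cat{Pr}_{\omega}^{\otimes}$.
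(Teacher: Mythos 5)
Your proposal is correct and follows the same route the paper takes: the paragraph preceding the corollary already constructs the $\E_2$-monoidal structure via $\Shv_{\cat{C}}(X) \simeq \Shv(X) \otimes \cat{C}$, \cref{9795957f03}, and \cref{8e9b197f4f}, and the corollary is left as an immediate consequence. You have simply unpacked that implicit argument, spelling out how each clause of \cref{04732c0097} follows (with stability coming from $\cat{C}$ being a $\Cat{Sp}$-module), which is the intended reading.
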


To state the main theorem,
we recall
basic facts on Boolean algebras.
See \autocite[Section~II.4]{Johnstone82} for details.
First recall
that Boolean algebras form a reflective subcategory of $ \Cat{DLat} $,
which we denote by $ \Cat{BAlg} $.
The left adjoint of the inclusion $ \Cat{BAlg} \hookrightarrow \Cat{DLat} $
is called the Booleanization functor,
which we denote by~$ \B $.
For a coherent topological space~$ X $,
the spectrum of its Booleanization of the distributive lattice of
quasicompact open subsets of~$ X $ is
the Stone space whose topology is the constructible topology
(also referred to as the patch topology) of~$ X $.
Hence using \cref{5f0590fbb0}, \cref{43db5adc0d} can be rephrased as follows:

\begin{theorem}\label{7fd46cb929}
    Let $ \cat{C}^{\otimes} $ be a big tt-$ \infty $-category and
    $ L $ a distributive lattice.
    Then we have a canonical isomorphism
    \begin{equation*}
        \Zar(\Shv_{\cat{C}}(\Spec(L))^{\omega})
        \simeq \Zar(\cat{C}^{\omega}) \otimes \B(L).
    \end{equation*}
\end{theorem}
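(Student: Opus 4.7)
The plan is to reduce the statement to the case of \emph{finite} $L$, which is handled by \cref{79ff4ffbd8}, and then pass to the general case via continuity.

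\textbf{Finite case.} Suppose $L$ is finite. Then $\Spec(L)$ is a finite $T_0$-space: the poset $P$ of prime filters of $L$ (equivalently, of its join-irreducibles) equipped with the Alexandrov topology of the specialization order. Sheaves of spaces on an Alexandrov space form a presheaf category, so there is a canonical symmetric monoidal equivalence
\begin{equation*}
    \Shv_{\cat{C}}(\Spec(L))^{\otimes} \simeq \Fun(P^{\op}, \cat{C})^{\otimes},
\end{equation*}
where the right-hand side carries the pointwise $\E_2$-monoidal structure of \cref{a023043790}. Since $P^{\op}$ is a finite acyclic category, \cref{79ff4ffbd8} gives $\Zar(\Shv_{\cat{C}}(\Spec(L))^{\omega}) \simeq \Zar(\cat{C}^{\omega})^{P_{0}}$. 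It remains to identify the target with $\Zar(\cat{C}^{\omega}) \otimes \B(L)$ in $\Cat{DLat}$. Stone/Birkhoff duality for finite $L$ yields $\B(L) \simeq \P(P_{0})$ (the constructible topology on a finite $T_0$-space is discrete), and a straightforward partition-of-unity induction on $|S|$ shows $D \otimes \P(S) \simeq D^{S}$ in $\Cat{DLat}$ for any $D$ and any finite set~$S$.

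\textbf{Reduction.} For general $L$, write $L \simeq \injlim_{i \in I} L_{i}$ as the filtered colimit of its finite distributive sublattices. The Booleanization functor $\B$ is a left adjoint, the tensor product in $\Cat{DLat}$ preserves colimits in each variable, and $\Zar$ preserves filtered colimits by \cref{continuity}; hence the right-hand side of the desired equivalence is the colimit of the corresponding finite expressions. The inclusions $L_{i} \hookrightarrow L$ dualize to pullbacks $\Spec(L) \to \Spec(L_{i})$, inducing morphisms $\Shv_{\cat{C}}(\Spec(L_{i})) \to \Shv_{\cat{C}}(\Spec(L))$. Applying $\Zar$ to the left-hand side and invoking the finite case, the problem reduces to the geometric compatibility
\begin{equation*}
    \Shv_{\cat{C}}(\Spec(L))^{\omega} \simeq \injlim_{i} \Shv_{\cat{C}}(\Spec(L_{i}))^{\omega}
    \quad \text{in} \quad \Alg_{\E_{2}}(\Cat{Cat}_{\infty}^{\perf}).
\end{equation*}

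\textbf{Main obstacle.} The last displayed equivalence is the crux; all other steps are bookkeeping. To prove it, I would invoke the basis-recovery theorem developed in \cref{s_basis}: the filtered family $\{L_{i}\}$ exhibits $L$ as a filtered union of finite bases of the canonical coherent site on $L$, and the appendix identifies $\Shv(\Spec(L))$ with the filtered colimit of the $\Shv(\Spec(L_{i}))$'s in $\Cat{Pr}_{\omega}^{\st}$. Tensoring with $\cat{C}$ and lifting to the $\E_{2}$-monoidal setting via \cref{8e9b197f4f} then yields the required equivalence, provided one checks that the pointwise $\E_{2}$-monoidal structures assemble compatibly along the transition functors (using that pointwise tensor products on sheaves over Alexandrov opens require no sheafification). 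Pinning down this monoidal compatibility is the most delicate point, and is precisely what motivates the technical appendix.
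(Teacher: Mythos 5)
Your overall architecture matches the paper's proof: reduce to the finite case by writing $L$ as a filtered colimit of its finite sublattices, invoking \cref{continuity} on one side, and then handle the finite case via \cref{79ff4ffbd8}. However, you have mislocated both the real technical work and the role of the appendix.

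First, in the finite case you assert that ``sheaves of spaces on an Alexandroff space form a presheaf category'' as if it were a generality. It is not: for $\infty$-toposes the map $i_* \colon \PShv(P^{\op}) \to \Shv(\Alex(P))$ can fail to be an equivalence when $P$ is infinite, and the paper constructs an explicit counterexample in \cref{31116799a2}. What saves you is that $P$ is finite here, and the identification $\Fun(P,\cat{C})^{\otimes} \simeq \Shv_{\cat{C}}(\Alex(P))^{\otimes}$ for finite $P$ is exactly \cref{19837320e6}, whose proof rests on \cref{ec5a5ba37f} and hence on the appendix. So the appendix \emph{is} needed, but it is needed precisely here, in the finite case, to justify the identification you treat as obvious; it is not about assembling monoidal structures along transition maps.

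Second, the step you single out as the ``main obstacle'' --- the equivalence $\Shv_{\cat{C}}(\Spec(L)) \simeq \injlim_i \Shv_{\cat{C}}(\Spec(L_i))$ --- is proved in the paper as \cref{2297e48463}, and the argument there has nothing to do with bases or with the appendix. It goes through the chain
\begin{equation*}
    \Cat{DLat} \xrightarrow{\Idl} \Cat{Loc}^{\op} \xrightarrow{\Shv^{\op}} \Cat{Top}_{\infty}^{\op} \to \Cat{Pr} \xrightarrow{\X \otimes \cat{C}} \Cat{Pr},
\end{equation*}
checking each step preserves filtered colimits (via \cref{6d78f23da1}, reflectivity of $0$-localic $\infty$-toposes, the fact that cofiltered limits of $\infty$-toposes are computed on underlying $\infty$-categories, and \cref{dee93321da} of \cref{tensor}). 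Your proposal of exhibiting $L$ as a ``filtered union of finite bases'' and appealing to \cref{hyp-dense} does not work as stated: the finite sublattices $L_i$ are not bases for the coherent site on $L$ (an arbitrary $\ell \in L$ need not be covered by elements of a given $L_i$), and \cref{hyp-dense} compares a single site with a single basis, not a site with a filtered system of subsites. If you repair your finite case by citing \cref{19837320e6}/\cref{ec5a5ba37f} and replace your reduction argument by a proof along the lines of \cref{2297e48463}, the proof becomes essentially the one in the paper.
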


The proof uses the following notion:

\begin{definition}\label{95b1e699de}
    For a poset $ P $,
    the \emph{Alexandroff topology}
    is a topology on the underlying set of~$ P $
    whose open sets are cosieves (or equivalently, upward closed subsets).
    Let $ \Alex(P) $ denote the set~$ P $
    equipped with this topology.
\end{definition}

\begin{lemma}\label{19837320e6}
    For a big tt-$ \infty $-category~$ \cat{C}^{\otimes} $
    and a finite poset~$ P $,
    there exists a canonical equivalence between
    big tt-$ \infty $-categories
    \begin{equation*}
        \Fun(P, \cat{C})^{\otimes}
        \simeq \Shv_{\cat{C}}(\Alex(P))^{\otimes}.
    \end{equation*}
\end{lemma}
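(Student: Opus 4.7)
The plan is to realize both sides as the tensor product $\Fun(P, \Cat{S}) \otimes \cat{C}$ in $\Cat{Pr}$, with the $\E_2$-monoidal structure induced from tensoring a cartesian structure on $\Fun(P, \Cat{S})$ with that of $\cat{C}$. The whole assertion then reduces to identifying $\Shv(\Alex(P))$ with the presheaf $\infty$-topos $\Fun(P, \Cat{S})$.

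First I would apply \cref{188b761524} to obtain $\Fun(P, \cat{C}) \simeq \Fun(P, \Cat{S}) \otimes \cat{C}$, and recall from the construction preceding \cref{03eaee5199} that $\Shv_{\cat{C}}(\Alex(P)) \simeq \Shv(\Alex(P)) \otimes \cat{C}$. By \cref{a023043790}, the pointwise $\E_2$-monoidal structure on $\Fun(P, \cat{C})$ is \emph{defined} to be the one induced from the cartesian $\E_2$-monoidal structure on $\Fun(P, \Cat{S})$ via \cref{8e9b197f4f}; analogously, the $\E_2$-monoidal structure on $\Shv_{\cat{C}}(\Alex(P))$ is obtained from the cartesian $\E_2$-monoidal structure on the $\infty$-topos $\Shv(\Alex(P))$. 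Therefore, once I produce an equivalence $\Shv(\Alex(P)) \simeq \Fun(P, \Cat{S})$ of $\infty$-categories, the uniqueness of cartesian $\E_2$-monoidal structures on an $\infty$-category with finite products (as invoked in the proof of \cref{e371b27143}) upgrades it to an $\E_2$-monoidal equivalence, and tensoring with $\cat{C}$ yields the desired equivalence.

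For the equivalence of $\infty$-topoi, I would use the collection of upward closures $U_p := \{q \in P \mid q \geq p\}$, which forms a basis for $\Alex(P)$ and is partially ordered by reverse inclusion in bijection with $P^{\op}$. The crucial observation is that any cover of $U_p$ by basic opens $\{U_{p_i}\}$ must already contain $U_p$ itself: since $p \in U_p$ lies in some $U_{p_i}$, we have $p \geq p_i$ and hence $U_p \subseteq U_{p_i}$, which forces $p = p_i$. Thus the Grothendieck topology induced on the basis is trivial, and sheaves on $\Alex(P)$ are identified with arbitrary presheaves on the basis $P^{\op}$, i.e.\ with $\Fun(P, \Cat{S})$. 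In more explicit terms, the equivalence sends $F \colon P \to \Cat{S}$ to the sheaf $U \mapsto \projlim_{p \in U} F(p)$, with inverse given by restriction to the basis.

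The main technical obstacle will be to justify this basis-to-sheaf identification rigorously in the $\infty$-categorical setting; this is exactly the content of the material developed in \cref{s_basis}, which is therefore the natural tool to invoke. One caveat is that the appendix addresses \emph{hypercomplete} sheaves, so I would also note that $\Fun(P, \Cat{S})$ is hypercomplete (since $P$ is a finite poset, $\PShv(P^{\op})$ has finite homotopy dimension), ensuring that no distinction between $\Shv$ and its hypercompletion arises. Finiteness of $P$ also ensures both sides are compactly generated stable with $\E_2$-tensor products restricting to compact objects (via \cref{5a932f09f2} on the left, via \cref{9795957f03} and \cref{03eaee5199} on the right), so the equivalence indeed takes place within big tt-$\infty$-categories.
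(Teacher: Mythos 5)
Your overall strategy matches the paper's own proof: both sides are realized as tensor products in $\Cat{Pr}$, the comparison reduces to the equivalence of $\infty$-toposes $\PShv(P^{\op}) \simeq \Shv(\Alex(P))$ supplied by the appendix (\cref{ec5a5ba37f}), and the monoidal enhancement comes from passing to the cartesian $\E_2$-monoidal refinement before tensoring with $\cat{C}^{\otimes}$.

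One detail is misaimed, however. You argue that "no distinction between $\Shv$ and its hypercompletion arises" because $\Fun(P, \Cat{S}) = \PShv(P^{\op})$ is hypercomplete. But $\PShv(P^{\op})$ is always hypercomplete — it has homotopy dimension $\leq 0$ for any $P$, finite or not — and this is exactly the object the appendix already identifies with the hypercompletion $\Shv(\Alex(P))^{\hyp}$. Asserting that the hypercompletion is hypercomplete is a tautology; it does not show that the inclusion $\Shv(\Alex(P))^{\hyp} \hookrightarrow \Shv(\Alex(P))$ is an equivalence. What you actually need is hypercompleteness of $\Shv(\Alex(P))$ itself, and here the finiteness of $P$ is essential: it is established in \cref{ec5a5ba37f} via \autocite[Corollaries~7.2.1.1 and~7.2.4.20]{HTT} (a coherent space of finite Krull dimension yields a hypercomplete sheaf $\infty$-topos), and that is the point where \cref{31116799a2} shows things genuinely fail for infinite $P$. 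With that correction, the rest of the argument — the basis consisting of principal cosieves, the induced topology on it being trivial, and the promotion to a cartesian $\E_2$-monoidal equivalence via uniqueness of cartesian structures — is precisely what the paper does.
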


The proof relies on a result obtained in \cref{s_basis}.

\begin{proof}
    We have the desired equivalence by
    taking
    the tensor product of (the cartesian $ \E_2 $-monoidal refinement of)
    the equivalence $ \PShv(P^{\op}) \to \Shv(\Alex(P)) $
    obtained in \cref{ec5a5ba37f} with~$ \cat{C}^{\otimes} $.
\end{proof}

\begin{lemma}\label{2297e48463}
    For a big tt-$ \infty $-category~$ \cat{C}^{\otimes} $,
    the functor
    from~$ \Cat{DLat} $ to~$ \Alg_{\E_2}(\Cat{Pr}_{\omega}^{\st}) $
    that maps $ L $ to $ (\Shv_{\cat{C}}(\Spec(L)))^{\otimes} $
    preserves filtered colimits.
\end{lemma}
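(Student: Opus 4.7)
The plan is to decouple the coefficient category $\cat{C}$ from the site via the tensor product structure, and then verify that the constant-coefficient functor $L \mapsto \Shv(\Spec(L))$ preserves filtered colimits by analyzing compact objects.

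By the construction preceding \cref{03eaee5199}, we have a canonical identification $\Shv_{\cat{C}}(\Spec(L))^{\otimes} \simeq \Shv(\Spec(L))^{\otimes} \otimes \cat{C}^{\otimes}$ in $\Alg_{\E_2}(\Cat{Pr}_{\omega}^{\st})$, where $\Shv(\Spec(L))^{\otimes}$ is regarded as an $\E_{\infty}$-monoidal object of $\Cat{Pr}_{\omega}$ via its cartesian structure (which restricts to compact objects by \cref{9795957f03}). Since tensor products preserve colimits in each variable (\cref{ffd2ebeeb5} of \cref{tensor} combined with \cref{8e9b197f4f}), the operation $- \otimes \cat{C}^{\otimes}$ preserves filtered colimits. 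As the cartesian $\E_{\infty}$-monoidal structure on a presentable $\infty$-category with finite products is canonical and functorial with respect to colimit-preserving finite-product-preserving functors, the problem reduces to showing that the underlying functor $L \mapsto \Shv(\Spec(L))$ preserves filtered colimits in $\Cat{Pr}_{\omega}$.

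For this, I would work at the level of compact objects via the standard equivalence between $\Cat{Pr}_{\omega}$ and the $\infty$-category of idempotent-complete $\infty$-categories with finite colimits (and finite-colimit-preserving functors), under which filtered colimits correspond to filtered colimits of the compact parts followed by idempotent completion. By the proof of \cref{9795957f03}, $\Shv(\Spec(L))^{\omega}$ is the closure under finite colimits and retracts of the representables $y(a)$ for $a \in L$. Given a filtered diagram $(L_i)_{i \in I}$ in $\Cat{DLat}$ with colimit~$L$, filteredness ensures that any element of $L$ and any finite relational configuration of elements of $L$ descends to some $L_i$; hence every compact object of $\Shv(\Spec(L))$ lies in the essential image of $\Shv(\Spec(L_i))^{\omega}$ for some~$i$, and mapping spaces in $\Shv(\Spec(L))^{\omega}$ are filtered colimits of their stage-wise counterparts.

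The main obstacle will be the careful verification of the mapping-space claim. While morphisms between representables $y(a), y(b)$ in $\Shv(\Spec(L))$ reduce to the relation $a \leq b$ in~$L$ (which descends immediately, since filtered colimits in $\Cat{DLat}$ are computed at the set level), morphisms between arbitrary finite-colimit diagrams of representables require tracking how finite-colimit data lifts through the filtered colimit. This should follow routinely from the finitary nature of the compact-object generation, together with the fact that cover relations (finite joins) in~$L$ likewise descend to some $L_i$, but requires patient bookkeeping.
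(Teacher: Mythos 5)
Your opening reduction --- using $\Shv_{\cat{C}}(\Spec(L))^{\otimes} \simeq \Shv(\Spec(L))^{\otimes} \otimes \cat{C}^{\otimes}$, peeling off the $\E_2$-structure via the forgetful functor, and factoring out $\X \otimes \cat{C}$ --- matches the paper's first paragraph, which passes along $\Alg_{\E_2}(\Cat{Pr}_{\omega}^{\st}) \to \Cat{Pr}_{\omega}^{\st} \to \Cat{Pr}_{\omega} \to \Cat{Pr}$ and invokes \cref{dee93321da} of \cref{tensor}. Where you diverge is in the remaining claim that $L \mapsto \Shv(\Spec(L))$ preserves filtered colimits. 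The paper handles this purely formally, by factoring the functor as $\Cat{DLat} \xrightarrow{\Idl} \Cat{Loc}^{\op} \xrightarrow{\Shv^{\op}} \Cat{Top}_{\infty}^{\op} \to \Cat{Pr}$ and checking each piece: $\Idl$ preserves colimits by Stone duality (\cref{6d78f23da1}); $\Shv^{\op}$ preserves colimits because $0$-localic $\infty$-toposes form a reflective subcategory of $\Cat{Top}_{\infty}$; and the forgetful functor preserves filtered colimits because filtered colimits in $\Cat{Top}_{\infty}^{\op}$ are computed at the level of the underlying very large $\infty$-category of large $\infty$-categories. You instead propose a compact-objects argument.

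The gap is the mapping-space step, which you defer to ``patient bookkeeping.'' It is not routine; it is the entire content of the lemma. The transition functors $\Shv(\Spec(L_i))^{\omega} \to \Shv(\Spec(L_j))^{\omega}$ are inverse-image functors of geometric morphisms, hence involve sheafification. So even for two compact objects lifted from a fixed stage $i$, the assertion that $\Map_{\Shv(\Spec(L))}(\X,\X)$ is the filtered colimit of the mapping spaces at later stages is exactly the statement that $\Shv(\X)$ carries cofiltered limits of spectral spaces to filtered colimits of $\infty$-categories --- the very thing being proved. Being built by finite colimits from sheafified representables does reduce you to understanding the value $(a^*G)(u)$ of an inverse image on a basic open, but inverse image is not computed pointwise, so this does not ``descend immediately''; the cover relations entering the sheaf condition at stage $L$ are genuinely new and need not lift to any single $L_i$. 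To salvage your route you would need to prove that the unit maps $G \to (g_{ij})_*(g_{ij})^*G$ stabilize along the filtered system, which amounts to reproving from scratch the good behaviour of coherent $\infty$-toposes under cofiltered limits. The paper's formal factorization through locales and $\infty$-toposes is precisely designed to avoid this hands-on computation.
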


\begin{proof}
    We first note that
    the composite of forgetful functors
    $ \Alg_{\E_2}(\Cat{Pr}_{\omega}^{\st})
    \to \Cat{Pr}_{\omega}^{\st} \to \Cat{Pr}_{\omega} $
    preserves sifted colimits and conservative.
    Since limits in the large $ \infty $-categories
    $ \Cat{Pr}^{\op} $ and $ \Cat{Pr}_{\omega}^{\op} $ are both computed in
    the very large $ \infty $-category of large $ \infty $-categories,
    the forgetful functor $ \Cat{Pr}_{\omega} \to \Cat{Pr} $
    preserves colimits and obviously conservative.
    Hence we are reduced to showing that the following composite
    preserves filtered colimits:
    \begin{equation*}
        \Cat{DLat}
        \xrightarrow{\Idl} \Cat{Loc}^{\op}
        \xrightarrow{{\Shv}^{\op}} \Cat{Top}_{\infty}^{\op}
        \xrightarrow{\textnormal{forgetful}} \Cat{Pr}
        \xrightarrow{\X \otimes \cat{C}} \Cat{Pr},
    \end{equation*}
    where $ \Cat{Top}_{\infty} $ denote
    the very large $ \infty $-category of $ \infty $-toposes
    whose morphisms are geometric morphisms.
    Now we can check that each functor
    preserves filtered colimits as follows:
    \begin{enumerate}
        \item
            The first functor preserves colimits by \cref{6d78f23da1}.
        \item
            The second functor preserves colimits since
            $ 0 $-localic $ \infty $-toposes
            form a reflective subcategory of $ \Cat{Top}_{\infty} $.
        \item
            The third functor preserves filtered colimits
            since cofiltered limits in $ \Cat{Top}_{\infty}^{\op} $ can be
            computed in
            the very large $ \infty $-category of large $ \infty $-categories.
        \item
            The fourth functor preserves colimits by~\cref{dee93321da}
            of \cref{tensor}.
    \end{enumerate}
    Hence the composite also preserves filtered colimits.
\end{proof}

\begin{proof}[Proof of \cref{7fd46cb929}]
    Since finitely generated distributive lattice has
    a finite number of objects,
    we can write $ L $ as a filtered colimit
    of finite distributive lattices.
    Hence
    by \cref{2297e48463,continuity},
    it suffices to consider the case when $ L $ is finite.
    Let $ P $ be a poset of points of $ \Spec(L) $
    with the specialization order;
    that is, the partial order in which $ p \leq q $
    if and only if the point~$ p $
    is contained in the closure of the singleton~$ \{q\} $.
    Since $ \Spec(L) $ is finite,
    there is a canonical homeomorphism $ \Spec(L) \simeq \Alex(P) $.
    Booleanizing their associated distributive lattices,
    we have that~$ \B(L) $ is canonically isomorphic to
    the power set of~$ P $
    ordered by inclusion.
    Then applying \cref{19837320e6},
    we obtain the desired equivalence as a corollary of \cref{79ff4ffbd8}.
\end{proof}

\section{Tensor triangular geometry of the Day convolution}\label{s_21e9cca7d2}

In this section, we prove \cref{5e265d653c},
which is a generalization of \cref{cbaf04a35f}.

\subsection{Partially ordered abelian groups}\label{ss_66b4e11377}

We begin with reviewing some notions used in the theory
of partially ordered abelian groups.

\begin{definition}\label{464cc710a7}
    A \emph{partially ordered abelian group}
    is an abelian group object of the category of posets;
    that is, an abelian group~$ A $ equipped with
    a partial order in which the map $ a + \X $ is order preserving
    for any $ a \in A $.
\end{definition}

We can regard a partially ordered abelian group
as a symmetric monoidal poset.

\begin{definition}\label{abac4be71f}
    Let $ A $ be a partially ordered abelian group.
    \begin{enumerate}
        \item
            The submonoid
            $ A_{\geq 0} = \{a \in A \mid a \geq 0\} $ is called
            the \emph{positive cone} of~$ A $.
        \item
            The subgroup $ A^{\circ} = \{a - b \mid a, b \in A_{\geq 0}\} $
            is called the \emph{identity component} of~$ A $.
            As the name suggests, this is
            the connected component containing $ 0 $
            when $ A $ is regarded as a category by its partial order.
        \item
            If $ A^{\circ} = A $ holds and $ A_{\geq 0} $ has finite joins,
            $ A $ is called \emph{lattice ordered}.
            This is equivalent to the condition that $ A $ has
            binary joins (but beware that $ A $ does not have
            the nullary join unless $ A $ is trivial).
            Note that in this case $ A $ also has binary meets,
            which are
            computed as $ a \wedge b = a + b - (a \vee b) $ for $ a, b \in A $.
    \end{enumerate}
\end{definition}

\begin{example}\label{0a6fe8036e}
    For a cardinal~$ \kappa $,
    the (categorical) product ordering on~$ \ZZ^{\kappa} $
    defines a lattice ordered abelian group.

    The assignment $ f \colon (x_1,x_2) \mapsto (x_1,x_2,x_1+x_2) $
    gives a morphism $ \ZZ^2 \to \ZZ^3 $
    of partially ordered abelian groups,
    but does not define a morphism of unbounded lattice:
    Indeed, we have
    $ f((1,0)) \vee f((0,1)) = (1,1,1) \neq (1,1,2) = f((1,0) \vee (0,1)) $.
\end{example}

\begin{example}\label{nonlo}
    There are many partially ordered abelian groups
    that are connected and not lattice ordered.
    We here give two relatively simple examples.
    The first is $ \ZZ $ with the ordering that
    makes its positive cone
    to be the submonoid generated by~$ 2 $ and~$ 3 $.
    The second is $ \ZZ \times \ZZ/2 $ with the ordering that
    makes its positive cone to be the submonoid
    generated by~$ (1, 0) $ and~$ (1, 1) $.
\end{example}

\subsection{The Archimedes semilattice}\label{ss_5edc93849b}

In this subsection,
we introduce the notion of the Archimedes semilattice
of a partially ordered abelian group.

\begin{definition}\label{60c5499fb9}
    For a partially ordered abelian group $ A $,
    a submonoid $ J \subset A_{\geq 0} $ is called an \emph{ideal} of~$ A_{\geq 0} $
    if it is downward closed;
    that is, if $ a, b \in A_{\geq 0} $ satisfies $ a \leq b $ and $ b \in J $,
    we have $ a \in J $.
    An ideal $ J $ is called \emph{principal} if it is generated
    as an ideal by an element of~$ A_{\geq 0} $.
\end{definition}

\begin{proposition}\label{c5f65cc35d}
    For a partially ordered abelian group~$ A $,
    the set of principal ideals of~$ A_{\geq 0} $ ordered by inclusion
    is an upper semilattice.
\end{proposition}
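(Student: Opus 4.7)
The plan is to give an explicit formula for the principal ideal generated by an element of~$A_{\geq 0}$ and then to read off the join semilattice structure directly from it. Concretely, for $a \in A_{\geq 0}$ I expect that
\[
    \langle a \rangle = \{x \in A_{\geq 0} \mid x \leq na \text{ for some integer } n \geq 0\}.
\]
To justify this, I would check that the right-hand side contains~$a$, is visibly downward closed, and is closed under addition because $x \leq na$ and $y \leq ma$ imply $x + y \leq (n+m)a$ (using that addition on~$A$ is order preserving in each variable, which is the defining property of a partially ordered abelian group). Conversely, any ideal containing~$a$ must contain every multiple $na$ and be downward closed, so it must contain the set above.

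Granted this formula, I would verify the existence of finite joins of principal ideals one case at a time. For the empty join, $\langle 0 \rangle = \{0\}$ is the minimum of the poset of all ideals of~$A_{\geq 0}$, since every submonoid must contain~$0$. For binary joins, the claim is that
\[
    \langle a \rangle \vee \langle b \rangle = \langle a + b \rangle.
\]
The inequalities $a, b \leq a + b$ show $\langle a \rangle, \langle b \rangle \subset \langle a + b \rangle$; conversely, any ideal containing both~$a$ and~$b$ contains~$a + b$ by submonoid closure, hence contains $\langle a + b \rangle$. So $\langle a + b \rangle$ is the smallest principal ideal containing $\langle a \rangle$ and~$\langle b \rangle$.

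I do not anticipate any real obstacle: the argument is entirely formal from the definition of an ideal together with the order compatibility of addition. In particular no stronger hypothesis on~$A$ (such as being lattice ordered, or satisfying $A^{\circ} = A$) is needed. Iterating the binary case yields the general description $\langle a_1 \rangle \vee \dotsb \vee \langle a_n \rangle = \langle a_1 + \dotsb + a_n \rangle$, which parallels the analogous formula for joins of principal thick subcategories in the proof of \cref{3232330989}.
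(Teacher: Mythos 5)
Your proof is correct and uses the same key idea as the paper's, namely that $\langle a_1 + \dotsb + a_n \rangle$ is the join of $\langle a_1 \rangle, \dotsc, \langle a_n \rangle$; you simply supply the routine details (the explicit description of $\langle a \rangle$ and the verification of the join property) that the paper dismisses as easy.
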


\begin{proof}
    It is easy to see that
    $ \langle a_1 + \dotsb + a_n \rangle $
    is a join of $ \langle a_1 \rangle $, \dots, $ \langle a_n \rangle $
    for $ a_1, \dotsc, a_n \in A_{\geq 0} $,
    where $ \langle a \rangle $ denotes
    the ideal of~$ A_{\geq 0} $ generated by an element $ a \in A_{\geq 0} $.
\end{proof}

\begin{definition}\label{8560b04872}
    For a partially ordered abelian group $ A $,
    the upper semilattice of principal ideals of~$ A_{\geq 0} $
    is denoted by $ \Arch(A) $.
    We call this the \emph{Archimedes semilattice} of~$ A $.
    Note that this only depends on its positive cone $ A_{\geq 0} $,
    regarded as a partially ordered abelian monoid.
\end{definition}

\begin{remark}\label{77fd493ebd}
    There is a characterization of the Archimedes semilattice
    similar to that of the Zariski lattice given in \cref{1745d1eb17}:
    Namely, the Archimedes semilattice $ \Arch(A) $ is initial among pairs $ (U, s) $
    where $ U $ is an upper semilattice
    and $ s \colon A_{\geq 0} \to U $ is a function
    satisfying the following conditions:
    \begin{enumerate}
        \item
            For $ a_1, \dotsc, a_n \in A_{\geq 0} $, we have
            $ s(a_1 + \dotsb + a_n) = s(a_1) \vee \dotsb \vee s(a_n) $.
        \item
            If $ a, b \in A_{\geq 0} $ satisfies $ a \leq b $,
            we have $ s(a) \leq s(b) $.
    \end{enumerate}
\end{remark}

\begin{example}\label{9ad2e6fd5}
    If $ A $ is a totally ordered abelian group,
    its Archimedes semilattice
    $ \Arch(A) $ consists of all Archimedean classes of~$ A $
    and the singleton~$ \{0\} $.
    This observation justifies the name.
    In particular,
    if $ A $ is nonzero Archimedean,
    we have $ \Arch(A) \simeq \{0 < 1\} $.
\end{example}

\begin{example}\label{d50762581e}
    Any Riesz space~$ R $ can be regarded
    as a lattice ordered abelian group
    in a trivial way.
    There is a bijective (and order preserving)
    correspondence between
    (principal) ideals of~$ R_{\geq 0} $ and
    those of~$ R $ in the usual sense.
\end{example}

\subsection{Tensorless tensor triangular geometry with actions}\label{ss_ss}

In this subsection, we construct the comparison map,
which we prove to be an isomorphism under
some mild assumptions.

\begin{proposition}\label{8388023eda}
    Let $ \cat{C}^{\otimes} $ be a big tt-$ \infty $-category
    and $ A $ a partially ordered abelian group.
    Then we have a canonical morphism of distributive lattices
    \begin{equation*}
        f \colon
        \Zar(\cat{C}^{\omega}) \otimes \Free(\Arch(A))
        \to \Zar(\Fun(A, \cat{C})^{\omega}),
    \end{equation*}
    where $ \Free \colon \Cat{SLat} \to \Cat{DLat} $
    denotes the left adjoint to the forgetful functor.
\end{proposition}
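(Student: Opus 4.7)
The plan is to reduce to constructing two morphisms of distributive lattices and to combine them via the universal property of the coproduct $\otimes$ in $\Cat{DLat}$: it suffices to produce $f_1 \colon \Zar(\cat{C}^{\omega}) \to \Zar(\Fun(A, \cat{C})^{\omega})$ and $f_2 \colon \Free(\Arch(A)) \to \Zar(\Fun(A, \cat{C})^{\omega})$ separately.

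For $f_1$, note that the inclusion of the unit $\{0\} \hookrightarrow A$ is a morphism of $\E_2$-monoidal $\infty$-categories; left Kan extension along it yields a symmetric monoidal functor $\cat{C} \simeq \Fun(\{0\}, \cat{C}) \to \Fun(A, \cat{C})^{\otimes}$ for the Day convolution, which restricts to compact objects by \cref{2b52e3227f}. Applying the functor $\Zar$ from \cref{05648f71d8} produces $f_1$.

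For $f_2$, combine the universal property of $\Free$ with the characterization of $\Arch(A)$ in \cref{77fd493ebd} to reduce to producing a function $s \colon A_{\geq 0} \to \Zar(\Fun(A, \cat{C})^{\omega})$ satisfying additivity $s(a_1 + \dotsb + a_n) = s(a_1) \vee \dotsb \vee s(a_n)$ and monotonicity $a \leq b \Rightarrow s(a) \leq s(b)$. Set $s(a) \mathrel{:=} \sqrt{Q_a}$, where $h_a \in \Fun(A, \cat{C})^{\omega}$ is the image of $a \in A^{\op}$ under the Yoneda embedding $A^{\op} \hookrightarrow \Fun(A, \Cat{S})$ (tensored with $\unit_{\cat{C}}$) and $Q_a \mathrel{:=} \cofib(h_a \to \unit)$. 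Since the Yoneda embedding is symmetric monoidal, $h_a \otimes h_b \simeq h_{a+b}$, and each $h_a$ is invertible with inverse $h_{-a}$, so $\sqrt{h_a} = 1$. Monotonicity reduces to additivity via $b = a + (b-a)$, so the content lies entirely in additivity.

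Additivity is the main obstacle. One inequality is routine: the octahedron axiom applied to $h_{a+b} = h_a \otimes h_b \to h_a \to \unit$ yields a cofiber sequence $h_a \otimes Q_b \to Q_{a+b} \to Q_a$, so $\sqrt{Q_{a+b}} \leq \sqrt{Q_a} \vee \sqrt{Q_b}$ after using $\sqrt{h_a} = 1$. For the reverse inequality, the key observation is that the map $h_{a+b} \otimes Q_a \to Q_a$ induced by $h_{a+b} \to \unit$ is null: writing the source as $\cofib(h_{2a+b} \to h_{a+b})$, its value at $c \in A$ is nonzero only when $c \geq a+b$ (hence $c \geq a$ since $b \geq 0$), whereas $Q_a(c)$ is nonzero only when $c \not\geq a$; these conditions are incompatible pointwise. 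It follows that $Q_a \otimes Q_{a+b} \simeq \cofib(h_{a+b} \otimes Q_a \to Q_a) \simeq Q_a \oplus \Sigma(h_{a+b} \otimes Q_a)$, and taking supports (using $\sqrt{h_{a+b}} = 1$) yields $\sqrt{Q_a} \wedge \sqrt{Q_{a+b}} = \sqrt{Q_a \otimes Q_{a+b}} = \sqrt{Q_a}$, hence $\sqrt{Q_a} \leq \sqrt{Q_{a+b}}$. The symmetric argument gives $\sqrt{Q_b} \leq \sqrt{Q_{a+b}}$, completing additivity.
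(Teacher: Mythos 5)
Your construction of $f_1$ agrees with the paper's (left Kan extension along $0 \hookrightarrow A$; one small remark: \cref{2b52e3227f} concerns preservation of compacts by the Day convolution tensor products, not by the Kan extension functor itself — compact-preservation of $(i_0)_!$ instead follows because its right adjoint, evaluation at $0$, preserves all colimits). For $f_2$, however, you take a genuinely different route. The paper first proves \cref{ss-main}, a statement about arbitrary $A$-\emph{semi}supports that makes no reference to the tensor product; it is proved by a $3\times 3$ diagram chase involving the slices $F\{a/b\}$ and a factorization of a square through an identity, and is then applied to the initial support via \cref{12dc5c4509}. You instead argue tensorially: you exploit invertibility of the twists $h_a = \unit\{a\}$ and a vanishing argument to split $Q_a \otimes Q_{a+b}$. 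Your route is shorter for the Proposition at hand, but it cannot replace the paper's lemma, since \cref{ss-gen} and \cref{lo-case} later invoke \cref{ss-main} for $A$-semisupports that are \emph{not} of the form $F \mapsto \sqrt{F}$, so the tensorless version is genuinely needed downstream.

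One step should be tightened. You assert that the map $\alpha \colon h_{a+b}\otimes Q_a \to Q_a$ is null because its source and target are ``incompatible pointwise.'' In an $\infty$-categorical functor category, a natural transformation whose components are all zero need not be null: for $A = \ZZ$ and $\cat{C} = \Cat{Sp}$, take $F$ with $F(0) = S$ and zero elsewhere, and $G$ with $G(1) = S$ and zero elsewhere; then every component of a map $F \to G$ is forced to be zero, yet $\Map(F,G) \simeq \Omega \Map_{\Cat{Sp}}(S,S)$ is not contractible. What rescues your claim is that the support of $h_{a+b}\otimes Q_a$ lies in the \emph{cosieve} $U = \{c \in A : c \geq a+b\}$; by the recollement of \cref{883b124c64} a stable-valued functor vanishing outside a cosieve is the left Kan extension of its restriction to that cosieve, so $\Map(h_{a+b}\otimes Q_a, Q_a) \simeq \Map\bigl((h_{a+b}\otimes Q_a)\rvert_U, Q_a\rvert_U\bigr)$, which is contractible since $Q_a\rvert_U \simeq 0$ (as $c \geq a+b \geq a$). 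Equivalently, one can factor the defining square through the identity on $\unit\{a+b\}$, exactly as the paper does in its proof of \cref{ss-main}. With that repair the argument is complete.
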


It is convenient to
regard the $ \infty $-category $ \Fun(A, \cat{C}) $ as equipped
with the action of~$ A $, which is described in the following definition:

\begin{definition}\label{e9bc641989}
    Suppose that $ \cat{C} $
    is a compactly generated stable $ \infty $-category
    and $ A $ is a partially ordered abelian group.

    Then precomposition
    with the map $ (a,b) \mapsto b-a $ induces a functor
    $ \Fun(A, \cat{C}) \to \Fun(A^{\op} \times A, \cat{C}) $,
    which can be seen as a functor from $ \Fun(A, \cat{C}) \times A^{\op} $
    to $ \Fun(A, \cat{C}) $.
    We write $ F\{a\} $ for the value of this functor at $ (F, a) $
    and $ F\{a/b\} $ for the cofiber of the map $ F\{b\} \to F\{a\} $,
    which is only defined when $ a \leq b $;
    concretely, $ F\{a\} $ is an object satisfying $ F\{a\}(b) \simeq F(b-a) $
    for $ b \in A $.

    We call a semisupport~$ s $ for $ \Fun(A, \cat{C})^{\omega} $
    an \emph{$A$-semisupport} if
    for $ a \in A $ and $ F \in \Fun(A, \cat{C})^{\omega} $,
    we have $ s(F\{a\}) = s(F) $.
    Similarly, we call a thick subcategory of $ \Fun(A, \cat{C})^{\omega} $
    a \emph{thick $A$-subcategory} if it is stable
    under the operation $ F \mapsto F\{a\} $ for any $ a \in A $.
\end{definition}

From now on, we abuse notation by identifying the object~$ C \in \cat{C}^{\omega} $
with its left Kan extension
along the inclusion $ 0 \hookrightarrow A $ of partially
ordered abelian groups.
Especially, we do not distinguish the units of~$ \cat{C}^{\otimes} $
and $ \Fun(A, \cat{C})^{\otimes} $, which we denote by~$ \unit $.

\begin{example}\label{12dc5c4509}
    Suppose that $ \cat{C}^{\otimes} $ is a big tt-$ \infty $-category.
    For any object $ F \in \Fun(A, \cat{C})^{\omega} $
    and any element $ a \in A $ we have
    $ F\{a\} \simeq F \otimes \unit\{a\} $
    and that $ \unit\{a\} $ is invertible with inverse $ \unit\{-a\} $.
    Thus any support for $ (\Fun(A, \cat{C})^{\omega})^{\otimes} $
    can be regarded as an $ A $-semisupport.
    More generally, for any object $ G \in \Fun(A, \cat{C})^{\omega} $
    and any support~$ s $, the assignment $ F \mapsto s(G \otimes F) $ defines
    an $ A $-semisupport for $ \Fun(A, \cat{C})^{\omega} $.
\end{example}

\begin{lemma}\label{ss-main}
    In the situation of \cref{e9bc641989},
    suppose that $ s $ is
    an $ A $-semisupport for $ \Fun(A, \cat{C})^{\omega} $
    and $ F $ is an object of $ \Fun(A, \cat{C})^{\omega} $.
    Then the following assertions hold:
    \begin{enumerate}
        \item
            \label{ss-additive}
            For $ a_1, \dotsc, a_n \in A_{\geq 0} $, we have
            $ s(F\{0/(a_1 + \dotsb + a_n)\})
            = s(F\{0/a_1\}) \vee \dotsb \vee s(F\{0/a_n\}) $.
        \item
            \label{ss-monotone}
            For $ a, b \in A_{\geq 0} $ satisfying
            $ a \leq b $, we have
            $ s(F\{0/a\}) \leq s(F\{0/b\}) $.
    \end{enumerate}
\end{lemma}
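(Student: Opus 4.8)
The plan is to verify both assertions by computing with the cofiber sequences that relate the objects $F\{0/a\}$, using the defining properties of a semisupport (conditions \cref{064da024a5,efc38747ae,bd772585de} of \cref{c167a286d1}) together with the $A$-invariance $s(G\{a\}) = s(G)$. The key elementary observation is that for $a, b \in A_{\geq 0}$ there is a cofiber sequence relating $F\{0/a\}$, $F\{0/(a+b)\}$, and a shift of $F\{0/b\}$: concretely, applying the octahedral axiom to the composable maps $F\{a+b\} \to F\{b\} \to F\{0\}$ gives a cofiber sequence $F\{b\}\{0/a\} \to F\{0/(a+b)\} \to F\{0/b\}$ in $\Fun(A,\cat{C})^{\omega}$, where $F\{b\}\{0/a\}$ is the cofiber of $F\{a+b\} \to F\{b\}$. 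Since $s$ is an $A$-semisupport, $s(F\{b\}\{0/a\}) = s((F\{0/a\})\{b\}) = s(F\{0/a\})$, because the operation $G \mapsto G\{b\}$ commutes with taking cofibers and $s$ is $\{b\}$-invariant.

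For \cref{ss-additive} I would argue by induction on $n$. The case $n = 1$ is trivial. For the inductive step, set $b = a_1 + \dotsb + a_{n-1}$ and $a = a_n$; the cofiber sequence above together with condition \cref{bd772585de} of \cref{c167a286d1} gives
\begin{equation*}
    s(F\{0/(a_1+\dotsb+a_n)\}) \vee s(F\{0/b\}) = s(F\{0/b\})\vee s(F\{0/a_n\}) = \dotsb,
\end{equation*}
but I actually want the cleaner identity $s(F\{0/(a+b)\}) = s(F\{0/a\}) \vee s(F\{0/b\})$. This follows from \cref{bd772585de} applied to the cofiber sequence $(F\{0/a\})\{b\} \to F\{0/(a+b)\} \to F\{0/b\}$: that axiom yields $s((F\{0/a\})\{b\}) \vee s(F\{0/(a+b)\}) = s(F\{0/(a+b)\}) \vee s(F\{0/b\})$ and $s(F\{0/(a+b)\}) \vee s(F\{0/b\}) = s(F\{0/b\}) \vee s((F\{0/a\})\{b\})$, hence $s(F\{0/(a+b)\})$ lies between $s(F\{0/a\}) \vee s(F\{0/b\})$ and itself in the appropriate sense; combined with $A$-invariance and the fact that in a distributive lattice $x \vee z = y \vee z$ and $z \vee x = z \vee y$ together with $y \leq$-comparisons pin down the value, I get $s(F\{0/(a+b)\}) = s(F\{0/a\}) \vee s(F\{0/b\})$. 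Iterating over $n$ terms gives \cref{ss-additive}.

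Assertion \cref{ss-monotone} is then immediate: if $a \leq b$ with $a, b \in A_{\geq 0}$, write $b = a + (b-a)$ with $b - a \in A_{\geq 0}$, and apply \cref{ss-additive} with two terms to get $s(F\{0/b\}) = s(F\{0/a\}) \vee s(F\{0/(b-a)\}) \geq s(F\{0/a\})$.

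The main obstacle I anticipate is being careful about the octahedral-type identity in the $\infty$-categorical setting — specifically, identifying the cofiber of $F\{a+b\} \to F\{b\}$ with $(F\{0/a\})\{b\}$. This amounts to checking that the shift functor $G \mapsto G\{b\}$, which is precomposition with $b + \X \colon A \to A$, is exact (it is, being a precomposition functor on a stable functor category) and that it intertwines the translation maps correctly, i.e. $(F\{b\})\{a+b\mapsto b\}$ matches $(F\{0\to a\})\{b\}$ up to canonical equivalence. Once that naturality is in place, everything else is a formal manipulation inside the distributive lattice $L$ using only the semisupport axioms, so I expect the write-up to be short.
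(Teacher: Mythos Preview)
Your octahedral cofiber sequence $(F\{0/a\})\{b\} \to F\{0/(a+b)\} \to F\{0/b\}$ is correct, and it does give one of the two inequalities you need for \cref{ss-additive}: writing $x = s(F\{0/a\})$, $y = s(F\{0/b\})$, $z = s(F\{0/(a+b)\})$, condition \cref{bd772585de} yields $x \vee z = z \vee y = y \vee x$, and hence $z \leq x \vee z = x \vee y$. But the relations $x \vee z = z \vee y = x \vee y$ do \emph{not} force $z = x \vee y$: already in the two-element lattice, $x = y = 1$ and $z = 0$ satisfy them. The sentence ``together with $y \leq$-comparisons pin down the value'' is exactly where the argument breaks, and no amount of distributivity will save it (and in any case a semisupport only lands in an upper semilattice, not a distributive lattice).

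What is missing is the inequality $x \vee y \leq z$, i.e.\ $s(F\{0/a\}) \leq s(F\{0/(a+b)\})$ and symmetrically for $b$; but this is precisely \cref{ss-monotone}, which you then want to deduce from \cref{ss-additive}. So the argument is circular. The paper avoids this by proving \cref{ss-monotone} first, and the proof there is not a formal consequence of the cofiber axiom: one sets up a $3 \times 3$ diagram of cofiber sequences and shows that a specific comparison map $F\{b/(a+b)\} \to F\{0/a\}$ is zero (by factoring the top-left square through $F\{a\}$), which forces the bottom cofiber sequence to split and gives $s(F\{0/a\}) = s(C) \leq s(F\{0/b\})$. Only after that does the octahedral sequence finish off \cref{ss-additive}. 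You should reorganize along these lines; the remaining identifications (e.g.\ $\cofib(F\{a+b\}\to F\{b\}) \simeq (F\{0/a\})\{b\}$) that you flagged as potential obstacles are indeed routine.
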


\begin{proof}
    We first prove \cref{ss-monotone}.
    Consider the following diagram,
    in which all the rows and columns are cofiber sequences:
    \begin{equation*}
        \xymatrix{
            F\{a+b\} \ar[r] \ar[d]
            & F\{a\} \ar[r] \ar[d]
            & F\{a/(a+b)\} \ar[d]
            \\
            F\{b\} \ar[r] \ar[d]
            & F\{0\} \ar[r] \ar[d]
            & F\{0/b\} \ar[d]
            \\
            F\{b/(a+b)\}
            \ar[r]^{f}
            & F\{0/a\}
            \ar[r]
            & C\rlap{.}
        }
    \end{equation*}
    Since there exists an equivalence
    $ F\{a/(a+b)\} \simeq F\{0/b\}\{a\} $,
    we have $ s(F\{a/(a+b)\}) = s(F\{0/b\}) $.
    Similarly we have $ s(F\{b/(a+b)\}) = s(F\{0/a\}) $.
    Using the right cofiber sequence, we have
    $ s(C) \leq s(F\{0/b\}) $.
    We complete the proof by showing $ s(C) = s(F\{0/a\}) $.
    To prove this, it suffices to show that the morphism~$ f $
    in the diagram is zero.
    This follows from the fact that
    the left top square can be decomposed as follows:
    \begin{equation*}
        \xymatrix{
            F\{a+b\} \ar[r] \ar[d]
            & F\{a\} \ar@{=}[r] \ar@{=}[d]
            & F\{a\} \ar[d]
            \\
            F\{b\} \ar[r]
            & F\{a\} \ar[r]
            & F\{0\}\rlap{.}
        }
    \end{equation*}

    We then prove \cref{ss-additive}.
    The case $ n = 0 $ is trivial.
    Hence it suffices to consider the case $ n = 2 $.
    Consider the following diagram:
    \begin{equation*}
        \xymatrix{
            F\{a_1+a_2\} \ar@{=}[r] \ar[d]
            & F\{a_1+a_2\} \ar[r] \ar[d]
            & 0 \ar[d]
            \\
            F\{a_2\} \ar[r] \ar[d]
            & F\{0\} \ar[r] \ar[d]
            & F\{0/a_2\} \ar@{=}[d]
            \\
            F\{a_2/(a_1+a_2)\}
            \ar[r]
            & F\{0/(a_1+a_2)\}
            \ar[r]
            & F\{0/a_2\}\rlap{.}
        }
    \end{equation*}
    Since all the other rows and columns are cofiber sequences,
    so is the bottom row.
    Hence
    we have
    $ s(F\{0/(a_1+a_2)\}) \leq s(F\{a_2/(a_1+a_2)\}) \vee s(F\{0/a_2\})
    = s(F\{0/a_1\}) \vee s(F\{0/a_2\}) $.
    On the other hand, applying \cref{ss-monotone},
    we have $ s(F\{0/a_1\}) \vee s(F\{0/a_2\})
    \leq s(F\{0/(a_1+a_2)\}) $.
    Therefore, the desired equality follows.
\end{proof}

\begin{proof}[Proof of \cref{8388023eda}]
    The left Kan extension along the inclusion $ 0 \hookrightarrow A $
    defines a morphism
    $ \Zar(\cat{C}^{\omega}) \to \Zar(\Fun(A, \cat{C})^{\omega}) $.
    By \cref{ss-main},
    the assignment $ a \mapsto \sqrt{\unit \{0/a\}} $ satisfies the conditions
    given in \cref{77fd493ebd}, so we have
    a morphism $ \Free(\Arch(A)) \to \Zar(\Fun(A, \cat{C})^{\omega}) $.
    Combining these two, we obtain the desired morphism.
\end{proof}

Now we study $ A $-semisupports in more detail.
First, by mimicking the proof of \cref{3232330989},
we can deduce the following:

\begin{proposition}\label{b78d433be4}
    In the situation of \cref{e9bc641989},
    principal thick $ A $-subcategories form
    an upper semilattice by inclusion
    and the assignment that
    takes an object of $ \Fun(A, \cat{C})^{\omega} $ to
    the thick $ A $-subcategory generated by it
    defines an $ A $-semisupport.
    Furthermore, it is an initial $ A $-semisupport.
\end{proposition}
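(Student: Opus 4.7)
The plan is to imitate the strategy used for \cref{3232330989}, inserting the $A$-equivariance condition at every step. First I introduce the notation $\langle F \rangle_A$ for the thick $A$-subcategory of $\Fun(A,\cat{C})^{\omega}$ generated by an object~$F$; such a subcategory exists and is principal in the sense of \cref{3c914c1835} after forgetting the $A$-action. The key technical input is the $A$-equivariant counterpart of \cref{36af6900ff}: for any $A$-semisupport $(U,s)$ and any object $F \in \Fun(A,\cat{C})^{\omega}$, the full subcategory
\begin{equation*}
    I_F = \bigl\{ G \in \Fun(A,\cat{C})^{\omega} \mathrel{\bigl|} s(G) \leq s(F) \bigr\}
\end{equation*}
is a thick $A$-subcategory. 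That $I_F$ is a thick subcategory follows verbatim from the argument of \cref{36af6900ff}, which uses only conditions \cref{064da024a5,efc38747ae,bd772585de}. Stability under the operation $G \mapsto G\{a\}$ is immediate from the defining condition $s(G\{a\}) = s(G)$ of an $A$-semisupport.

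Next I verify that the join in the poset of principal thick $A$-subcategories exists and is given by the direct sum: for $F_1, \dotsc, F_n \in \Fun(A,\cat{C})^{\omega}$, I claim
\begin{equation*}
    \langle F_1 \rangle_A \vee \dotsb \vee \langle F_n \rangle_A
    = \langle F_1 \oplus \dotsb \oplus F_n \rangle_A.
\end{equation*}
The inclusion $\supseteq$ is clear since each $F_i$ is a retract of the sum; the inclusion $\subseteq$ follows because the right-hand side is a thick $A$-subcategory containing each $F_i$. In particular, the set of principal thick $A$-subcategories, ordered by inclusion, has all finite joins and is therefore an upper semilattice, and the assignment $s_0 \colon F \mapsto \langle F \rangle_A$ satisfies condition~\cref{efc38747ae} of \cref{c167a286d1}. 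Condition~\cref{064da024a5} is obvious, and condition~\cref{bd772585de} follows from the fact that in a stable $\infty$-category any two terms of a cofiber sequence generate, together with the shift-closure built into the notion of thick subcategory, the third. The identity $\langle F\{a\} \rangle_A = \langle F \rangle_A$ is built into the definition of thick $A$-subcategory, so $s_0$ is an $A$-semisupport.

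Finally I establish initiality. Given any $A$-semisupport $(U,s)$, I must produce a unique morphism of upper semilattices $\varphi$ from principal thick $A$-subcategories to~$U$ with $\varphi \circ s_0 = s$. Uniqueness forces the definition $\varphi(\langle F \rangle_A) = s(F)$; to see this is well-defined, suppose $\langle F \rangle_A = \langle G \rangle_A$. Applying the key lemma above to $I_F$ and $I_G$ and using that each contains the generator of the other, we conclude $s(F) \leq s(G)$ and $s(G) \leq s(F)$, so $s(F) = s(G)$. Preservation of finite joins then follows from the explicit formula for joins together with condition~\cref{efc38747ae} for~$s$.

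The only mildly delicate step is the $A$-stability of $I_F$ in the initial lemma, but this is exactly what the extra axiom on an $A$-semisupport was designed to guarantee, so no real obstacle appears; the rest is a formal transcription of the argument in \cref{3232330989}.
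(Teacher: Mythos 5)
Your proof is correct and follows exactly the strategy the paper indicates (``by mimicking the proof of Proposition~\ref{3232330989}''): the $A$-equivariant analogue of Lemma~\ref{36af6900ff} is the right key lemma, and the join formula plus well-definedness of the induced map to an arbitrary $A$-semisupport give initiality. One small slip: in verifying the join formula, the retract argument is what gives the inclusion $\langle F_1 \rangle_A \vee \dotsb \vee \langle F_n \rangle_A \subseteq \langle F_1 \oplus \dotsb \oplus F_n \rangle_A$ (since the right-hand side then contains every $\langle F_i \rangle_A$ and hence is an upper bound), while the reverse inclusion follows because any thick $A$-subcategory containing all the $F_i$ contains their direct sum; you have the two labels swapped, though both inclusions are of course established.
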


If $ A $ is lattice ordered,
the initial $ A $-semisupport has simple generators.

\begin{proposition}\label{ss-gen}
    In the situation of \cref{e9bc641989},
    we furthermore assume that $ A $ is lattice ordered.
    Then the target of the initial $ A $-semisupport
    described in \cref{b78d433be4}
    is generated as an upper semilattice
    by thick $ A $-subcategories
    generated by an object of the form $ C\{0/a_1\} \dotsb \{0/a_n\} $
    with $ C \in \cat{C}^{\omega} $
    and $ a_1, \dotsc, a_n \in A_{\geq 0} $
    satisfying
    $ a_i \wedge a_j = 0 $ if $ i \neq j $.
\end{proposition}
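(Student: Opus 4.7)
The plan is to induct on the cardinality of a finite subposet $ K \subset A $ representing $ F $ as $ F = (i_K)_! G $ for some $ G \colon K \to \cat{C}^{\omega} $; existence of such a presentation is guaranteed by \cref{02eb77190d}. The base case $ |K| \leq 1 $ is immediate, since then $ F $ is an $ A $-shift of some $ G(a) \in \cat{C}^{\omega} $, a special object with $ n = 0 $. For the inductive step, I pick a minimal $ a \in K $, so that $ K' := K \setminus \{a\} $ is a cosieve on $ K $. Applying the recollement of \cref{883b124c64} in $ \Fun(K, \cat{C}) $ and then the exact functor $ (i_K)_! $ produces a cofiber sequence $ (i_{K'})_!(G|_{K'}) \to F \to H $ in $ \Fun(A, \cat{C}) $, where $ H $ is the left Kan extension to $ A $ of the functor on $ K $ supported at $ a $ with value $ G(a) $. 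Both outer terms lie in $ \langle F \rangle_A $ as fiber and cofiber, so $ \langle F \rangle_A = \langle (i_{K'})_!(G|_{K'}) \rangle_A \vee \langle H \rangle_A $, and the first summand is a finite join of specials by induction applied to the smaller subposet $ K' $.

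It remains to show the same for $ \langle H \rangle_A $. Let $ b_1, \dots, b_m $ be the covers of $ a $ in $ K $ and set $ c_i := b_i - a \in A_{\geq 0} $; the pointwise formula for left Kan extensions realizes $ H(x) $ as the colimit of a \emph{star}-shaped diagram valued $ G(a) $ at the apex and $ 0 $ at the spokes. I then perform a second recollement on $ A $ along the cosieve $ A_{\geq b_1 \vee \cdots \vee b_m} $ (the join existing by lattice-orderedness), which splits $ H $ into an upper piece --- an explicit $ A $-shifted iterated suspension direct sum of $ G(a) $, and hence in the thick $ A $-subcategory of the special object $ G(a) $ --- and a lower piece, identifiable up to an $ A $-shift with the iterated cofiber $ G(a)\{0/c_1\} \cdots \{0/c_m\} $.

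When the $ c_i $ are pairwise orthogonal this lower piece is itself a special object with $ n = m $, closing the induction. In the general case, the lattice-ordered structure of $ A $ permits a further decomposition of $ G(a)\{0/c_1\}\cdots\{0/c_m\} $ into iterated cofibers with pairwise orthogonal exponents, leveraging the Riesz-type identity $ c_i + c_j = (c_i \vee c_j) + (c_i \wedge c_j) $ and the resulting cofiber sequences which replace a non-orthogonal pair $ (c_i, c_j) $ with the orthogonal remainders $ (c_i - c_i \wedge c_j,\, c_j - c_i \wedge c_j) $ plus a correction term. The main obstacle is precisely this last combinatorial reduction: iterating the orthogonalization while verifying at every stage that each intermediate piece remains in $ \langle H \rangle_A $, so that the resulting finite join of special objects indeed reconstructs $ \langle F \rangle_A $.
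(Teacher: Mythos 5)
Your argument takes a different route than the paper's, which first replaces $F$ by its restriction to a finite $B \subset A$ that is closed under both binary joins and meets and then applies the splitting lemma (\cref{ssspl-1}) iteratively; and it has several genuine gaps.

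The most basic one is the assertion that ``both outer terms lie in $\langle F \rangle_A$ as fiber and cofiber.'' A cofiber sequence $X \to F \to H$ only gives the containment $\langle F \rangle_A \subset \langle X \rangle_A \vee \langle H \rangle_A$; the reverse containments do \emph{not} follow (for the degenerate sequence $\Sigma^{-1} H \to 0 \to H$ the outer terms certainly need not lie in $\langle 0 \rangle$). One has to show that, say, $H$ is already generated by $F$ via shifts, retracts, cofiber sequences and $A$-translations, and this is exactly what \cref{ssspl-0,ssspl-1} accomplish, passing through the splitting criterion \cref{recol-spl}. That step is nontrivial and cannot be elided by the phrase ``as fiber and cofiber.''

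A second gap is the identification of the lower piece with an $A$-shift of $G(a)\{0/c_1\}\dotsb\{0/c_m\}$. Because you allow $K$ to be an arbitrary finite subposet, the subposet $\{k \in K : k \leq x\}$ need not have a greatest element, and the colimit computing $H(x)$ then produces suspensions of $G(a)$. Concretely, take $A = \ZZ^3$ and $K = \{(0,0,0),(1,0,0),(0,1,0),(0,0,1)\}$, so $a = 0$ and the $c_i = b_i$ are pairwise orthogonal. At $x = (1,1,0)$, which is strictly below $b_1 \vee b_2 \vee b_3 = (1,1,1)$ and therefore falls in the lower piece, the colimit computing $H(x)$ is the pushout of $0 \leftarrow G(a) \to 0$, namely $\Sigma G(a)$; but $(G(a)\{0/c_1\}\{0/c_2\}\{0/c_3\})(x) = 0$ since $x \geq c_1$. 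The two objects are not equivalent. The paper sidesteps this by insisting $B$ be closed under joins, so that every $B_{\leq x}$ has a maximum and $F_b$ really does have the advertised shape.

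Finally, the ``orthogonalization'' obstacle you flag is real, but the paper avoids it entirely: by also closing $B$ under binary meets, the differences $a_i$ associated to the minimal elements above $b$ automatically satisfy $a_i \wedge a_j = 0$, since otherwise $b + (a_i \wedge a_j)$ would be an element of $B$ strictly between $b$ and $b + a_i$. Attempting instead to iterate the Riesz identity $c_i + c_j = (c_i \vee c_j) + (c_i \wedge c_j)$ and track correction terms is, given the previous gap, likely to spiral; choosing $B$ well from the outset is decisively cleaner.
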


We prove this in \cref{ss_proof}.
Note that the conclusion also holds
if we only require the positive cone to have binary joins;
see the proof of \cref{to-comp}.

\subsection{Main theorem}\label{ss_1359ae9a2e}

We now state the main result of this section.

\begin{theorem}\label{5e265d653c}
    In the situation of \cref{8388023eda},
    suppose furthermore that $ A_{\geq 0} $ has finite joins.
    Then the morphism~$ f $ is an isomorphism.
\end{theorem}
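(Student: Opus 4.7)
The plan is to show that the canonical morphism $f$ is both surjective and injective, and hence an isomorphism of distributive lattices.

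For surjectivity, I observe that by \cref{12dc5c4509}, the tautological assignment $F \mapsto \sqrt{F}$ on $\Fun(A,\cat{C})^{\omega}$ is an $A$-semisupport, since each $\unit\{a\}$ is tensor-invertible. Applying \cref{ss-gen} to this $A$-semisupport, any compact $F$ admits a decomposition $\langle F\rangle_A = \bigvee_i \langle C_i\{0/a_{i,1}\}\cdots\{0/a_{i,n_i}\}\rangle_A$ with $C_i \in \cat{C}^{\omega}$ and pairwise-orthogonal $a_{i,j} \in A_{\geq 0}$. Translating through the $A$-semisupport $\sqrt{\cdot}$ and using $C\{0/a\} \simeq C \otimes \unit\{0/a\}$, we deduce
\begin{equation*}
    \sqrt{F} = \bigvee_i \sqrt{C_i} \wedge \sqrt{\unit\{0/a_{i,1}\}} \wedge \cdots \wedge \sqrt{\unit\{0/a_{i,n_i}\}}.
\end{equation*}
Each summand is the image under $f$ of $\sqrt{C_i} \otimes \bigl(f_A(a_{i,1}) \wedge \cdots \wedge f_A(a_{i,n_i})\bigr)$, where $f_A \colon \Arch(A) \to \Free(\Arch(A))$ is the unit of the free/forgetful adjunction. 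Hence $\sqrt{F}$ lies in the image of $f$.

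For injectivity, I construct a left inverse $g \colon \Zar(\Fun(A,\cat{C})^{\omega}) \to L$, where $L := \Zar(\cat{C}^{\omega}) \otimes \Free(\Arch(A))$. By the universal property of $\Zar$ as the initial support (\cref{1745d1eb17}), this reduces to constructing a support $s \colon \Fun(A,\cat{C})^{\omega} \to L$ taking $C \in \cat{C}^{\omega}$ (viewed via left Kan extension along $0 \hookrightarrow A$) to $\sqrt{C} \otimes 1$ and $\unit\{0/a\}$ to $1 \otimes f_A(a)$. The multiplicativity axiom forces
\begin{equation*}
    s(C\{0/a_1\}\cdots\{0/a_n\}) = \sqrt{C} \wedge f_A(a_1) \wedge \cdots \wedge f_A(a_n)
\end{equation*}
on elementary objects with pairwise-orthogonal $a_i$. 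I then extend $s$ to an arbitrary $F$ by setting $s(F) := \bigvee_i s(E_i)$ for any decomposition from \cref{ss-gen} of $\langle F\rangle_A = \bigvee_i \langle E_i\rangle_A$ into elementary pieces. The identity $g \circ f = \operatorname{id}$ is then built into the construction.

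The main obstacle is to verify that $s$ is well-defined (independent of the chosen decomposition) and satisfies all of the support axioms of \cref{c167a286d1}. Well-definedness as an $A$-semisupport follows from \cref{b78d433be4}: to factor the elementary formula through the semilattice of principal thick $A$-subcategories, it suffices to check that this formula satisfies the $A$-semisupport axioms on elementary generators, which is direct. The cofiber axiom reduces, via the diagrams in the proof of \cref{ss-main}, to the identity $s(F\{0/(a+b)\}) = s(F\{0/a\}) \vee s(F\{0/b\})$, valid because $f_A$ carries the relation $\langle a+b\rangle = \langle a\rangle \vee \langle b\rangle$ in $\Arch(A)$ to the corresponding join in $\Free(\Arch(A))$. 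Multiplicativity reduces bilinearly to $s(\unit\{0/a\} \otimes \unit\{0/b\}) = f_A(a) \wedge f_A(b)$; this is precisely where the hypothesis that $A_{\geq 0}$ has finite joins enters, since $\unit\{0/a\} \otimes \unit\{0/b\} \simeq \unit\{0/a\}\{0/b\}$ admits, via \cref{ss-gen}, an elementary decomposition whose generators are built from $a$, $b$, and $a \vee b$, and a direct calculation matches the resulting lattice expression against $f_A(a) \wedge f_A(b)$ using distributivity in $L$.
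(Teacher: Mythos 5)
Your overall strategy is the same as the paper's: surjectivity is extracted from \cref{ss-gen}, and injectivity is to come from a left inverse~$g$ constructed by exhibiting a support~$s$ valued in $L = \Zar(\cat{C}^{\omega}) \otimes \Free(\Arch(A))$ that sends $C$ to $\sqrt{C}$ and $\unit\{0/a\}$ to~$\langle a\rangle$. The surjectivity half is correct: since every element of $\Zar(\Fun(A, \cat{C})^{\omega})$ has the form $\sqrt{F}$, and $\sqrt{\X}$ is an $A$-semisupport by \cref{12dc5c4509}, \cref{ss-gen} together with multiplicativity of $\sqrt{\X}$ does put $\sqrt{F}$ in the image of~$f$. (You silently use the remark after \cref{ss-gen} to cover the non-connected case; the paper instead reduces explicitly via \cref{to-comp}.)

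The gap is in the construction of~$s$. You define $s(F) := \bigvee_i s(E_i)$ using an elementary decomposition of $\langle F\rangle_A$ furnished by \cref{ss-gen}, and then assert that well-definedness ``follows from \cref{b78d433be4}: to factor the elementary formula through the semilattice of principal thick $A$-subcategories, it suffices to check that this formula satisfies the $A$-semisupport axioms on elementary generators.'' That is not a valid principle. \cref{b78d433be4} gives the universal property of the initial $A$-semisupport: a function into a semilattice factors through it provided the function is \emph{already known} to be an $A$-semisupport. Conversely, \cref{ss-gen} says the target is generated by elementary classes, but gives no presentation, so specifying values on generators does not automatically yield a semilattice morphism; you must verify that different elementary decompositions of the same thick $A$-subcategory give the same value of~$s$. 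This decomposition-independence is precisely the hard part, which the paper handles by defining $s(F)$ directly via saturated subsets $B \subset A$ and the sets $\Theta(B,a) \subset \Arch(A)$, and then proving by a nontrivial combinatorial induction that the result is independent of~$B$. Your argument has a circularity here: you invoke $A$-semisupport properties of~$s$ (in the ``cofiber axiom'' and ``multiplicativity'' reductions, both of which quote \cref{ss-gen} and bilinearity of $A$-semisupports) before $s$ has been shown to be well-defined, let alone a semisupport. The subsidiary claim that the cofiber axiom ``reduces, via the diagrams in the proof of \cref{ss-main},'' to $s(F\{0/(a+b)\}) = s(F\{0/a\}) \vee s(F\{0/b\})$ also misreads \cref{ss-main}, which derives consequences from the semisupport axioms rather than reducing the axioms to a single identity. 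Finally, the concluding sentence on multiplicativity (``a direct calculation matches the resulting lattice expression'') stands in for the second half of the paper's proof of \cref{lo-case}, which involves a careful comparison of saturated sets $B$ and $B'$ — it is not obviously ``direct'' and must actually be carried out.
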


\begin{question}\label{3824a9f782}
    In \cref{5e265d653c},
    what happens if $ A $ does not satisfy the hypothesis?
    By \cref{to-comp}, we may assume that $ A $ is connected.
    To consider the case $ A $ is one of
    the examples given in \cref{nonlo}
    would be a starting point.
\end{question}

\begin{example}\label{8e997e2f3a}
    Claim \cref{04e366edb0} of \cref{cbaf04a35f}
    is a direct consequence of \cref{5e265d653c}.

    Since $ \Free(\{0<1\}) $
    is the linearly ordered set
    consisting of three elements,
    $ \Spec(\Free(\{0<1\})) $
    is homeomorphic to the Sierpi\'nski space.
    Hence by using \cref{5f0590fbb0,9ad2e6fd5},
    we can deduce \cref{e6bd26145c} of \cref{cbaf04a35f}.
\end{example}

Since the inclusion $ A^{\circ} \hookrightarrow A $
induces an isomorphism $ \Arch(A^{\circ}) \simeq \Arch(A) $,
\cref{5e265d653c} is a consequence of the following two results:

\begin{proposition}\label{to-comp}
    Let $ \cat{C}^{\otimes} $ be a big tt-$ \infty $-category
    and $ A $ a partially ordered abelian group.
    Then the morphism $ i \colon \Zar(\Fun(A^{\circ}, \cat{C})^{\omega})
    \to \Zar(\Fun(A, \cat{C})^{\omega}) $ induced by
    the inclusion $ A^{\circ} \hookrightarrow A $
    is an equivalence.
\end{proposition}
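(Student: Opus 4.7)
The plan is to construct a two-sided inverse to~$i$ using the initiality of~$\Zar$ from \cref{1745d1eb17}\cref{d7fcfff637}. The key structural observation is that $A^{\circ}$ is a union of connected components of~$A$: if $h \in A^{\circ}$ and $h \leq a$ in~$A$, then $a - h \in A_{\geq 0} \subset A^{\circ}$, forcing $a \in A^{\circ}$, and symmetrically for $a \leq h$. Writing $Q := A/A^{\circ}$ and choosing representatives $a_q \in A_q$ of each coset with $a_0 = 0$, translation by~$-a_q$ is a poset isomorphism $A_q \simeq A^{\circ}$, yielding a product decomposition $\Fun(A, \cat{C}) \simeq \prod_{q \in Q} \Fun(A_q, \cat{C})$ of $\infty$-categories. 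By \cref{02eb77190d}, applied to the $\omega$-directed family of finite subposets of the locally $\omega$-compact poset~$A$, every compact object is supported on finitely many components, so each $F \in \Fun(A, \cat{C})^{\omega}$ admits a canonical decomposition
\begin{equation*}
    F \simeq \bigoplus_{q \in S} (j_! G_q)\{a_q\}, \qquad S \subset Q \text{ finite}, \quad G_q \in \Fun(A^{\circ}, \cat{C})^{\omega}.
\end{equation*}

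I would then define a candidate support $s \colon \Fun(A, \cat{C})^{\omega} \to \Zar(\Fun(A^{\circ}, \cat{C})^{\omega})$ by $s(F) := \bigvee_{q \in S} \sqrt{G_q}$. Well-definedness (independence of the choice of~$a_q$) follows from the general principle that, in any tt-category, any support~$t$ takes the value~$1$ on every $\otimes$-invertible~$u$ (since $t(u) \wedge t(u^{-1}) = t(\unit) = 1$ forces $t(u) = 1$) and is therefore invariant under tensoring with such objects; applied to $u = \unit\{h\}$ for $h \in A^{\circ}$, this kills any dependence on the representatives. The additivity axiom~\cref{efc38747ae} and cofiber axiom~\cref{bd772585de} of~\cref{c167a286d1} follow componentwise from the product decomposition. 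For the multiplicative axiom~\cref{dbba1d4dc6}, I would combine \cref{9d1faa2fd7} with the strong monoidality of~$j_!$ to get, setting $h_{q_1,q_2} := a_{q_1} + a_{q_2} - a_q \in A^{\circ}$ for $q_1 + q_2 = q$,
\begin{equation*}
    (F \otimes H)_q \simeq \bigoplus_{q_1 + q_2 = q} j_!\bigl(G^F_{q_1} \otimes G^H_{q_2}\{h_{q_1,q_2}\}\bigr)\{a_q\};
\end{equation*}
translation invariance of~$\sqrt{\X}$ on $\Fun(A^{\circ}, \cat{C})^{\omega}$, together with distributivity, then yields $s(F \otimes H) = s(F) \wedge s(H)$.

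Once $s$ is verified to be a support, \cref{1745d1eb17}\cref{d7fcfff637} produces a unique lattice morphism $g \colon \Zar(\Fun(A, \cat{C})^{\omega}) \to \Zar(\Fun(A^{\circ}, \cat{C})^{\omega})$ with $g \circ \sqrt{\X} = s$. The identity $s \circ j_! = \sqrt{\X}$ is immediate from the decomposition (take $S = \{0\}$, $G_0 = F$), and the identity $i \circ s = \sqrt{\X}$ on $\Fun(A, \cat{C})^{\omega}$ follows from $\sqrt{F} = \bigvee_q \sqrt{(j_! G_q)\{a_q\}} = \bigvee_q \sqrt{j_! G_q}$, using translation invariance of radicals now on the target side. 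The uniqueness clauses of initiality then force $g \circ i = \mathrm{id}$ and $i \circ g = \mathrm{id}$. The hard part will be the multiplicative axiom for~$s$, whose verification depends on the explicit Day convolution formula and careful bookkeeping of the translations~$h_{q_1,q_2}$.
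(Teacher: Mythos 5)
Your proposal is correct and follows essentially the same route as the paper's proof: both decompose $A$ as the disjoint union of the translates $A^{\circ}+a_j$, use the finite-support property of compact objects, define a candidate support on $\Fun(A,\cat{C})^{\omega}$ valued in $\Zar(\Fun(A^{\circ},\cat{C})^{\omega})$ by collapsing the translations, verify the multiplicative axiom via the Day-convolution formula together with invariance of supports under tensoring with the invertibles $\unit\{a\}$ (the paper's \cref{12dc5c4509}), and conclude by the universal property of the Zariski lattice. The only cosmetic differences are that the paper packages the collapsing operation as an explicit left Kan extension $\Fun(A,\cat{C})^{\omega}\to\Fun(A^{\circ},\cat{C})^{\omega}$ before composing with the initial support, and verifies $r\circ i=\mathrm{id}$ by a direct computation rather than invoking the uniqueness clause of initiality.
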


\begin{proposition}\label{lo-case}
    In the situation of \cref{8388023eda},
    suppose furthermore that $ A $ is lattice ordered.
    Then the morphism~$ f $ is an isomorphism.
\end{proposition}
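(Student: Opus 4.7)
The strategy is to construct an explicit inverse $g$ to $f$ and verify that $g \circ f$ and $f \circ g$ are identities, paralleling the proof of \cref{79ff4ffbd8}. By the initiality of the Zariski lattice among supports (\cref{1745d1eb17}), producing $g$ reduces to equipping $\Fun(A,\cat{C})^{\omega}$ with a support valued in $M := \Zar(\cat{C}^{\omega}) \otimes \Free(\Arch(A))$.

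First I would apply \cref{ss-gen} to bring every $F \in \Fun(A,\cat{C})^{\omega}$ into a normal form: there exist $C_i \in \cat{C}^{\omega}$ and elements $a_{i,j} \in A_{\geq 0}$ with $a_{i,j} \wedge a_{i,k} = 0$ for $j \neq k$ such that $F$ generates the same thick $A$-subcategory as $\bigoplus_i C_i\{0/a_{i,1}\}\dotsb\{0/a_{i,n_i}\}$. Since every radical ideal is a thick $A$-subcategory (being closed under tensoring with the invertible objects $\unit\{a\}$), the same equality persists at the level of radical ideals, giving
\begin{equation*}
    \sqrt{F} = \bigvee_i \sqrt{C_i\{0/a_{i,1}\}\dotsb\{0/a_{i,n_i}\}}
    = f\Bigl(\bigvee_i \sqrt{C_i} \wedge [a_{i,1}] \wedge \dotsb \wedge [a_{i,n_i}]\Bigr)
\end{equation*}
in $\Zar(\Fun(A,\cat{C})^{\omega})$, using $C\{0/a\} \simeq C \otimes \unit\{0/a\}$. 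This already yields surjectivity of $f$. The natural candidate for the support $s$ is then the assignment
\begin{equation*}
    s(F) := \bigvee_i \sqrt{C_i} \wedge [a_{i,1}] \wedge \dotsb \wedge [a_{i,n_i}] \in M.
\end{equation*}

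The main obstacle will be to verify that $s$ is well-defined (independent of the chosen normal form) and satisfies the four support axioms. I expect well-definedness and the sum and cofiber axioms to follow by factoring $s$ through the initial $A$-semisupport of \cref{b78d433be4}, combined with \cref{ss-main} applied inside $M$. The subtle point is the tensor-product axiom $s(F \otimes G) = s(F) \wedge s(G)$: for $F = C\{0/a\}$ and $G = D\{0/b\}$ the tensor product is $(C \otimes D)\{0/a\}\{0/b\}$, which is generally not already in normal form since $a \wedge b$ need not vanish, and one must exploit the lattice ordering on $A$ to resolve the iterated cofiber $\{0/a\}\{0/b\}$ in terms of $\{0/(a \vee b)\}$ and $\{0/(a \wedge b)\}$ and then check that the resulting join in $M$ equals $\sqrt{C \otimes D} \wedge [a] \wedge [b]$. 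I expect the necessary lattice manipulations to appear in, or follow easily from, the proof of \cref{ss-gen} carried out in \cref{ss_proof}.

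Once $s$ is confirmed to be a support, the universal property yields $g \colon \Zar(\Fun(A,\cat{C})^{\omega}) \to M$. The identity $g \circ f = \mathrm{id}$ can be checked on the generating elements $\sqrt{C}$ (for $C \in \cat{C}^{\omega}$) and $[a]$ (for $a \in A_{\geq 0}$) of $M$, where it is immediate by construction of $f$ and $s$. Conversely, $f \circ g = \mathrm{id}$ follows from the surjectivity established above: for any $x \in \Zar(\Fun(A,\cat{C})^{\omega})$ we may write $x = f(y)$, and then $f(g(x)) = f(g(f(y))) = f(y) = x$.
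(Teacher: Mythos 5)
Your overall strategy — produce a support $s$ valued in $M = \Zar(\cat{C}^{\omega}) \otimes \Free(\Arch(A))$, invoke initiality of $\Zar$ to obtain $g$, and check $g \circ f = \mathrm{id}$ and $f \circ g = \mathrm{id}$ — matches the paper's, and your observations that $f$ is surjective and that $g \circ f = \mathrm{id}$ can be checked on generators are both correct. But the heart of the proof is exactly the step you identify as the ``main obstacle'' and then gloss over, and the two ideas you offer to fill it do not work.

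First, the proposed formula $s(F) = \bigvee_i \sqrt{C_i} \wedge [a_{i,1}] \wedge \dotsb \wedge [a_{i,n_i}]$ is defined in terms of a chosen decomposition produced by \cref{ss-gen}, and nothing pins this decomposition down. Your suggested remedy — ``factoring $s$ through the initial $A$-semisupport of \cref{b78d433be4}'' — is circular: to define a semilattice map out of the target of the initial $A$-semisupport by prescribing values on the generators from \cref{ss-gen}, you must verify that whenever two objects in normal form generate the same thick $A$-subcategory they receive the same value, which is precisely the well-definedness you were trying to establish. Second, your plan for the tensor axiom, to ``resolve'' $\{0/a\}\{0/b\}$ in terms of $\{0/(a\vee b)\}$ and $\{0/(a\wedge b)\}$, is not available: in $A = \ZZ^2$ with $a = (1,0)$, $b = (0,1)$ one has $a \wedge b = 0$, so $\unit\{0/(a\vee b)\}\{0/(a\wedge b)\} \simeq 0$ while $\unit\{0/a\}\{0/b\}$ is the nonzero skyscraper at the origin; these do not even generate the same radical ideal, so no such resolution computes supports.

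What is missing is the paper's concrete, decomposition-free definition of $s$. The paper fixes a \emph{saturated} subset $B \subset A$ with $F$ the left Kan extension of $F\rvert_B$, introduces the subsets $\Theta(B,a) = \{J \in \Arch(A) \mid (a+J) \cap B = \{a\}\} \in \Free(\Arch(A))$, and sets $s(F) = \bigvee_{a \in B} \sqrt{F(a)} \wedge \Theta(B,a)$. This makes well-definedness a finite induction comparing $B$ with $B \cup \{b'\}$, and reduces the tensor axiom (after the reduction via \cref{ss-gen} that you correctly anticipate) to the single identity $s(F\{0/b\}) = s(F) \wedge s(\unit\{0/b\})$, proved by an explicit computation with $\Theta$. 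Without an intrinsic formula of this kind, the remainder of your sketch cannot be completed.
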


We conclude this subsection by showing \cref{to-comp};
we prove \cref{lo-case} in the next subsection.

\begin{proof}[Proof of \cref{to-comp}]
    In this proof,
    we regard $ \Fun(A^{\circ}, \cat{C})^{\omega} $
    as a full subcategory of $ \Fun(A, \cat{C})^{\omega} $
    by left Kan extension.

    Let $ J $ denote the quotient $ A/A^{\circ} $.
    For each $ j \in J $ we choose an element $ a_j \in j $.
    Then as a category,
    we can identify $ A $ with
    $ \coprod_{j \in J} (A^{\circ} + a_j) $.
    Hence combining the translations
    $ A + a_j \to A $ for all $ j \in J $,
    we have a functor $ A \to A^{\circ} $.
    This defines a functor $ \Fun(A, \cat{C})^{\omega}
    \to \Fun(A^{\circ}, \cat{C})^{\omega} $ by left Kan extension;
    concretely, it is given by the formula
    $ \bigoplus_{j \in J} F_j \{a_j\} \mapsto \bigoplus_{j \in J} F_j $
    for any family $ (F_j)_{j \in J} $ of objects
    of $ \Fun(A^{\circ}, \cat{C})^{\omega} $ such that
    $ F_j $ is zero
    except a finite number of indices $ j \in J $.

    Let $ s $ denote the composite
    $ \Fun(A, \cat{C})^{\omega} \to \Fun(A^{\circ}, \cat{C})^{\omega}
    \to \Zar(\Fun(A^{\circ}, \cat{C})^{\omega}) $,
    where the second map is the initial support.
    We wish to show that $ s $ is a support for
    $ (\Fun(A, \cat{C})^{\omega})^{\otimes} $
    by checking that the conditions given in \cref{c167a286d1}
    are satisfied.
    The only nontrivial point is to prove that
    $ s $ satisfies condition~\cref{dbba1d4dc6}.
    Since the case $ n = 0 $ follows
    from the fact that $ \unit\{-a_{[0]}\} $ is invertible,
    it is sufficient to consider the case $ n = 2 $.
    We take two objects $ F, G \in \Fun(A, \cat{C})^{\omega} $
    and decompose them
    as $ F \simeq \bigoplus_{j \in J} F_j\{a_j\} $
    and $ G \simeq \bigoplus_{j \in J} G_j\{a_j\} $
    using two families of objects $ (F_j)_{j \in J} $, $ (G_j)_{j \in J} $
    of $ \Fun(A^{\circ}, \cat{C})^{\omega} $ such that
    both $ F_j $ and $ G_j $ are zero
    except a finite number of indices $ j \in J $.
    To prove $ s(F \otimes G) = s(F) \wedge s(G) $,
    unwinding the definitions,
    we need to show that
    the equality
    \begin{equation*}
        \bigvee_{j, j' \in J}
        \sqrt{(F_j \otimes G_{j'})\{a_j+a_{j'}-a_{j+j'}\}}
        =
        \bigvee_{j \in J} \sqrt{F_j}
        \wedge
        \bigvee_{j \in J} \sqrt{G_j}
    \end{equation*}
    holds in $ \Zar(\Fun(A^{\circ}, \cat{C})^{\omega}) $.
    This follows from the observation made in
    \cref{12dc5c4509}.

    Hence we obtain a morphism
    $ r \colon \Zar(\Fun(A, \cat{C})^{\omega})
    \to \Zar(\Fun(A^{\circ}, \cat{C})^{\omega}) $
    from~$ s $.
    The composite $ r \circ i $ is the identity
    since we have
    $ (r \circ i)\bigl(\sqrt{F}\bigr) = \sqrt{F\{-a_{[0]}\}} = \sqrt{F} $
    for $ F \in \Fun(A^{\circ}, \cat{C})^{\omega} $.
    To prove that
    $ i \circ r $ is the identity,
    unwinding the definitions,
    it suffices to show that
    $ \sqrt{\bigoplus_{j \in J} F_j\{a_j\}} $ equals to
    $ \sqrt{\bigoplus_{j \in J} F_j} $
    for any family $ (F_j)_{j \in J} $ of objects
    of $ \Fun(A^{\circ}, \cat{C})^{\omega} $ such that
    $ F_j $ is zero
    except a finite number of indices $ j \in J $.
    This also follows from the observation made in \cref{12dc5c4509}.
\end{proof}

\subsection{Postponed proofs}\label{ss_proof}

In this subsection, we give the proofs of
\cref{lo-case,ss-gen}.
First we introduce some terminology.

\begin{definition}[used only in this subsection]\label{8e21b697ce}
    Suppose that $ A $ is a lattice ordered abelian group.
    \begin{enumerate}
        \item
            \label{bb673d9b4d}
            We call a subset $ B \subset A $ \emph{saturated} if it is finite
            and closed under binary joins.
            Note that for every finite set $ B \subset A $
            we can find the smallest saturated subset of~$ A $
            containing~$ B $.
        \item
            \label{6039d772a4}
            Let $ \cat{C} $ be a compactly generated stable $ \infty $-category.
            By applying \cref{883b124c64}
            to the cosieve of~$ A $ generated by a single element~$ a \in A $,
            we obtain a presentation
            of $ \Fun(A, \cat{C}) $ as a recollement.
            Hence for $ F \in \Fun(A, \cat{C}) $
            we have a cofiber sequence $ j_!j^*F \to F \to i_*i^*F $,
            where we use the notation of \cref{recol-spl}.
            We write $ F_{\sgeq a} $ and~$ F_{\sngeq a} $
            for~$ j_!j^*F $ and~$ i_*i^*F $, respectively.
    \end{enumerate}
\end{definition}

The proof of \cref{ss-gen} relies on the following lemma:

\begin{lemma}\label{ssspl-1}
    Suppose that $ A $ is a lattice ordered abelian group,
    $ \cat{C} $ is a compactly generated stable $ \infty $-category,
    and $ s $ is an $ A $-semisupport for $ \Fun(A, \cat{C})^{\omega} $.
    Then
    for $ a \in A $
    and $ F \in \Fun(A, \cat{C})^{\omega} $, we have
    $ s(F) = s(F_{\sgeq a}) \vee s(F_{\sngeq a}) $
\end{lemma}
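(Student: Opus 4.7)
The plan is to prove the two inequalities $s(F) \leq s(F_{\sgeq a}) \vee s(F_{\sngeq a})$ and $s(F_{\sgeq a}) \vee s(F_{\sngeq a}) \leq s(F)$ separately. For the forward one, I apply \cref{883b124c64} to the cosieve $\{c \in A : c \geq a\} \subset A$ to realize the triple $(F_{\sgeq a}, F, F_{\sngeq a})$ as the defining cofiber sequence of a recollement on $\Fun(A, \cat{C})$, which lies in $\Fun(A, \cat{C})^{\omega}$ since the recollement functors $j_! j^*$ and $i_* i^*$ preserve compactness. Applying condition~\cref{bd772585de} of~\cref{c167a286d1} (which extends verbatim to semisupports per~\cref{3c914c1835}) gives $s(F_{\sgeq a}) \vee s(F) = s(F) \vee s(F_{\sngeq a})$, from which the desired bound on $s(F)$ follows.

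For the reverse direction it suffices to show $s(F_{\sgeq a}) \leq s(F)$ and $s(F_{\sngeq a}) \leq s(F)$. Using $A$-invariance of $s$ and the identity $F_{\sgeq a} \simeq ((F\{-a\})_{\sgeq 0})\{a\}$ (together with its $\sngeq$-analogue), I reduce to the case $a = 0$. I first verify the claim when $F = C\{b\}$ for $C \in \cat{C}^{\omega}$ and $b \in A$: the lattice structure on $A$ gives $(C\{b\})_{\sgeq 0} = C\{0 \vee b\}$, which is the shift of $C\{b\}$ by $(0 \vee b) - b \in A_{\sgeq 0}$, so $A$-invariance yields $s((C\{b\})_{\sgeq 0}) = s(C\{b\})$; the bound $s((C\{b\})_{\sngeq 0}) \leq s(C\{b\})$ then follows from the triangle inequality applied to $(C\{b\})_{\sgeq 0} \to C\{b\} \to (C\{b\})_{\sngeq 0}$, whose outer terms have semisupport $s(C\{b\})$.

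The main obstacle is propagating the bounds from these generators $C\{b\}$ to arbitrary compact $F$, which by~\cref{02eb77190d} is a finite iterated extension of such generators. The inequality $s((-)_{\sgeq a}) \leq s(-)$ is not \emph{a priori} preserved under arbitrary cofiber sequences: a naive triangle-inequality argument only bounds $s(F_{\sgeq a})$ by $s(F'_{\sgeq a}) \vee s(F''_{\sgeq a}) \leq s(F') \vee s(F'')$, which may strictly exceed $s(F)$. To get around this I intend to exploit the $A$-action more carefully through the bound $s(F\{0/a\}) \leq s(F)$ (which follows, as in the proof of~\cref{8388023eda}, from $A$-invariance applied to $F\{a\} \to F \to F\{0/a\}$). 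After using $A$-invariance to further reduce to the case where $F$ is supported in $A_{\sgeq 0}$, the pointwise identifications $F\{a\}_{\sngeq a} = 0$ and $F\{0/a\}_{\sngeq a} = F_{\sngeq a}$ hold, and the cofiber sequences $F\{a\} \to F_{\sgeq a} \to F\{0/a\}_{\sgeq a}$ and $0 \to F_{\sngeq a} \to F\{0/a\}_{\sngeq a}$ obtained by applying $(-)_{\sgeq a}$ and $(-)_{\sngeq a}$ to $F\{a\} \to F \to F\{0/a\}$ should then yield the required bounds by combination with $s(F\{a\}) = s(F)$ and $s(F\{0/a\}) \leq s(F)$.
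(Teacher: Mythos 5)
Your forward inequality is correct, and so is your verification for $F = C\{b\}$. You have also correctly identified the real difficulty: the bound $s((-)_{\sgeq a}) \leq s(-)$ does not propagate through cofiber sequences via the semisupport axioms alone, since the triangle inequality only bounds $s(F_{\sgeq a})$ by $s(F'_{\sgeq a}) \vee s(F''_{\sgeq a}) \leq s(F') \vee s(F'')$, which can exceed $s(F)$.

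However, your proposed fix is circular. Applying $(-)_{\sgeq a}$ to the cofiber sequence $F\{a\} \to F \to F\{0/a\}$ gives (for $F$ supported in $A_{\sgeq 0}$) the sequence $F\{a\} \to F_{\sgeq a} \to (F\{0/a\})_{\sgeq a}$, which combined with $s(F\{a\}) = s(F)$ and the triangle inequality only yields $s(F_{\sgeq a}) \leq s(F) \vee s\bigl((F\{0/a\})_{\sgeq a}\bigr)$; you still need a bound on $s\bigl((F\{0/a\})_{\sgeq a}\bigr)$ in terms of $s(F\{0/a\})$, which is the very same problem applied to $F\{0/a\}$. And $F\{0/a\}$ is not smaller in any sense that admits induction: for $A = \ZZ$, $a = 1$, $F\{0/1\}$ is the associated graded, and iterating $\{0/1\}$ never reaches zero. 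Likewise, the identification $F_{\sngeq a} \simeq (F\{0/a\})_{\sngeq a}$ is useful only once you already know $s\bigl((F\{0/a\})_{\sngeq a}\bigr) \leq s(F\{0/a\})$, again the same problem one step removed. No rearrangement of the triangle inequalities closes this loop; something stronger is required.

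The ingredient you are missing is a splitting, not another inequality. The paper first isolates a special case (\cref{ssspl-0}) where $F$ is left Kan extended from a saturated $B \subset A$ satisfying $b \wedge a \in \{a', a\}$ for all $b \in B$. Under this hypothesis one proves that $j^*\bigl((i_!i^*F)\{0/a\}\bigr)$ vanishes, and then by a diagram chase that $i^*(F\{0/a\}) \to i^*j_*j^*(F\{0/a\})$ is zero; \cref{recol-spl} then shows the recollement cofiber sequence for $F\{0/a\}$ actually splits. This exhibits $F_{\sngeq a} \simeq (F\{0/a\})_{\sngeq a}$ as a direct \emph{summand} of $F\{0/a\}$ — not merely its cofiber quotient — giving $s(F_{\sngeq a}) \leq s(F\{0/a\}) \leq s(F)$ with no circularity. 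The general statement is then obtained by choosing a maximal chain $0 = b_0 < \cdots < b_n = a$ in a saturated set supporting $F$ and applying \cref{ssspl-0} at each step of the chain. Since the semisupport axioms alone cannot detect a splitting, the recourse to the homotopy-theoretic decomposition of \cref{recol-spl} is essential; your proposal needs to incorporate this (or a comparable splitting argument) to go through.
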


We first prove the following special case:

\begin{lemma}\label{ssspl-0}
    In the situation of \cref{ssspl-1},
    suppose that $ a' \leq a $
    are elements of~$ A $ and
    $ B \subset A $ is a saturated subset
    satisfying $ b \wedge a \in \{a', a\} $ for $ b \in B $.
    If an object $ F \in \Fun(A, \cat{C})^{\omega} $
    is obtained as the left Kan extension of~$ F \rvert_B $,
    we have $ s(F) = s(F_{\sgeq a}) \vee s(F_{\sngeq a}) $.
\end{lemma}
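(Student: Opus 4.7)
My plan is to exhibit $F_{\sngeq a}$ as a direct summand of $G := F\{0/(a-a')\}$, the cofiber of the canonical map $F\{a-a'\} \to F$, and then chase semisupports through two cofiber sequences.

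I would first identify $G_{\sngeq a}$ with $F_{\sngeq a}$. Since both arise via $i_*i^*$, it suffices to check that $i^* G = i^* F$, i.e., $F(c - (a - a')) = 0$ for every $c \sngeq a$. Unwinding the left Kan extension, a nonzero contribution would require $b \in B$ with $b + (a - a') \leq c$; but the hypothesis forces $B_1 \subset A_{\geq a}$ and $B_2 \subset A_{\geq a'}$, so $b + (a - a') \geq a$ in either case, contradicting $c \sngeq a$.

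Next I would show the cofiber sequence $G_{\sgeq a} \to G \to G_{\sngeq a}$ splits by \cref{recol-spl}, i.e., that the natural map $G(d) \to G(a \vee d)$ is null for every $d \sngeq a$. Since $F$ is a left Kan extension from the saturated $B$, we have $F(c) \simeq F(b_c^*)$ where $b_c^* := \bigvee(B \cap A_{\leq c})$; when $F(d) \neq 0$, the element $b_d^*$ lies in $B_2$. The target $G(a \vee d)$ equals $\cofib(F(a \vee d - (a-a')) \to F(a \vee d))$, so the vanishing reduces to showing that $F(d) \to F(a \vee d)$ factors through $F(a \vee d - (a-a'))$. Via the formula above, this in turn reduces to the key inequality $b_d^* + (a-a') \leq a \vee d$. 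To verify it, the condition $b_d^* \wedge a = a'$ is equivalent to the disjointness $(b_d^* - a') \wedge (a - a') = 0$, whence $(b_d^* - a') + (a - a') = (b_d^* - a') \vee (a - a')$; combined with $(a \vee d) - a' \geq a - a'$ (from $a \vee d \geq a$) and $(a \vee d) - a' \geq b_d^* - a'$ (from $a \vee d \geq d \geq b_d^*$), this gives $(a \vee d) - a' \geq (b_d^* - a') + (a - a')$, which rearranges to the desired bound.

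The splitting yields $G \simeq G_{\sgeq a} \oplus F_{\sngeq a}$, so $s(F_{\sngeq a}) \leq s(G)$. The cofiber sequence $F\{a-a'\} \to F \to G$ together with the $A$-semisupport identity $s(F\{a-a'\}) = s(F)$ gives $s(G) \leq s(F)$, so $s(F_{\sngeq a}) \leq s(F)$. Finally, the defining cofiber sequence $F_{\sgeq a} \to F \to F_{\sngeq a}$ supplies both $s(F_{\sgeq a}) \leq s(F) \vee s(F_{\sngeq a}) = s(F)$ and $s(F) \leq s(F_{\sgeq a}) \vee s(F_{\sngeq a})$, yielding the desired equality $s(F) = s(F_{\sgeq a}) \vee s(F_{\sngeq a})$. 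I expect the lattice-ordered-group inequality in the third paragraph to be the main obstacle.
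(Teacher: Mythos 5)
Your proposal is correct and follows the same strategy as the paper: exhibit $F_{\sngeq a}$ as a direct summand of $F\{0/(a-a')\}$ via \cref{recol-spl}, then conclude with the semisupport identities. The paper verifies the hypothesis of \cref{recol-spl} by comparing with $i_!i^*F$ (after normalizing $a'=0$), whereas you verify it directly via the pointwise lattice inequality $b_d^* + (a-a') \leq a \vee d$; this is a cosmetic difference, not a different route.
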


\begin{proof}
    We may assume that $ a' = 0 $
    and $ B $ contains $ 0 $,
    which automatically becomes the least element of~$ B $.

    We again consider the recollement description
    of $ \Fun(A, \cat{C}) $
    given in~\cref{6039d772a4} of \cref{8e21b697ce}
    and continue to use the notation of \cref{recol-spl}.

    We first show that
    $ j^*((i_!i^*F)\{0/a\}) $ is zero,
    where $ i_! $ denote the left adjoint of~$ i^* $.
    This is equivalent to the assertion that for any $ c \in A_{\geq 0} $
    the morphism $ (i_!i^*F)(c) \to (i_!i^*F)(c+a) $ is an equivalence.
    We may assume that $ b \wedge a = 0 $ holds for any $ b \in B $
    to prove this.
    Unwinding the definitions, it is enough to show that
    $ \{b \in B \mid b \leq c\} $
    and $ \{b \in B \mid b \leq c+a\} $ have the same greatest element.
    Let $ b $ be the greatest element of the latter set.
    Then we have
    $ b = b \wedge (c+a) = c + ((b-c) \wedge a) \leq c + (b \wedge a)= c $,
    which means that $ b $ belongs to the former set.

    Then we consider the following diagram:
    \begin{equation*}
        \xymatrix{
            i^*((i_!i^*F)\{0/a\})
            \ar[r]_{\simeq} \ar[d]
            & i^*(F\{0/a\})
            \ar[d]
            \\
            i^*j_*j^*((i_!i^*F)\{0/a\})
            \ar[r]
            & i^*j_*j^*(F\{0/a\})\rlap{.}
        }
    \end{equation*}
    By what we have shown above, the bottom left object is zero,
    so that the right vertical morphism is zero.
    By applying \cref{recol-spl}, we have that
    $ F_{\sngeq a} \simeq (F\{0/a\})_{\sngeq a} $ is a direct sum of $ F\{0/a\} $.
    Hence we have $ s(F) \geq s(F\{0/a\}) \geq s(F_{\sngeq a}) $,
    which completes the proof.
\end{proof}

\begin{proof}[Proof of \cref{ssspl-1}]
    We take a saturated subset $ B \subset A $
    such that $ F $ is equivalent to the left Kan extension
    of~$ F \rvert_{B} $.
    We may assume that
    $ B $ contains $ 0 $ as the least element.
    By replacing $ a $ with~$ a \vee 0 $,
    we may assume that $ a \geq 0 $
    and also $ a \in B $.
    Now we take a maximal chain
    $ 0 = b_0 < \dotsb < b_n = a $
    in~$ B $.
    Then
    we can apply \cref{ssspl-0} iteratively
    to obtain an inequality
    \begin{equation*}
        s(F)
        = s(F_{\sgeq 0})
        = s(F_{\sgeq b_0})
        \geq \dotsb
        \geq s(F_{\sgeq b_n})
        = s(F_{\sgeq a}),
    \end{equation*}
    which completes the proof.
\end{proof}

\begin{proof}[Proof of \cref{ss-gen}]
    First we take a finite subset $ B \subset A $
    such that $ B $ is closed under
    binary joins and meets
    and $ F $ is equivalent to the left Kan extension of~$ F \rvert_B $.
    For $ b \in B $,
    we can take distinct elements $ a_1, \dotsc, a_n \in A_{\geq 0} $
    (possibly $ n = 0 $)
    such that $ \{b + a_1, \dotsc, b + a_n\} $
    is the set of minimal elements of $ \{c \in B \mid b < c \} $.
    By the assumption on~$ B $, we have that $ a_i \wedge a_j = 0 $
    for $ i \neq j $.
    Let $ F_b $ denote the object
    $ (\dotsb(F_{\sgeq b})_{\sngeq b + a_1}\dotsb)_{\sngeq b + a_n} $.
    Then applying \cref{ssspl-1} iteratively,
    we have $ s(F) = \bigvee_{b \in B} s(F_b) $.
    Thus we wish to show
    that $ F_b\{-b\} $ is equivalent to
    $ F(b) \{0/a_1\} \dotsb \{0/a_n\} $
    for $ b \in B $
    to complete the proof.

    We may assume that $ b = 0 $ and $ F = F_{\sgeq 0} $.
    We define a subset of~$ A $ by
    $ B' = \bigl\{\sum_{i \in I} a_i \bigm| I \subset \{1, \dotsc, n\}\bigr\} $;
    note that here $ \sum_{i \in I} a_i $ equals to the join $ \bigvee_{i \in I} a_i $
    taken in $ A_{\geq 0} $.
    By induction
    we can see that $ F(0)\{0/a_1\} \dotsb \{0/a_n\} $
    is equivalent to
    the left Kan extension of the object
    of~$ \Fun(B', \cat{C}) $
    which is the right Kan extension
    of $ F(0) \in \Fun(\{0\}, \cat{C}) $.
    Using the equivalence
    \begin{equation*}
        F_0
        \simeq
        (\dotsb(F(0)\{0/a_1\}\dotsb\{0/a_n\})_{\sngeq a_1}\dotsb)_{\sngeq a_n},
    \end{equation*}
    we are reduced to showing
    that $ (F(0)\{0/a_1\}\dotsb\{0/a_n\})(c) \simeq 0 $
    if $ c \in A $ satisfies $ c \geq a_i $ for some $ i $.
    This follows from the above description.
\end{proof}

Finally, we give the proof of \cref{lo-case}.

\begin{proof}[Proof of \cref{lo-case}]
    For a saturated subset $ B \subset A $
    and $ a \in B $, we make the following definition:
    \begin{equation*}
        \Theta(B,a) = \{J \in \Arch(A) \mid (a + J) \cap B = \{a\}\}
        \in \P(\Arch(A))
    \end{equation*}
    We claim that this set is in the image
    of the monomorphism
    $ \Free(\Arch(A)) \hookrightarrow \P(\Arch(A)) $
    described in \cref{eb431803ca}.
    We note that
    $ \Theta(\{a,b,a \vee b\},a) $
    is in the image for any $ b \in A $
    because an ideal~$ J $ belongs to this set
    if and only if $ J $ does not
    contain the ideal generated by
    $ (a \vee b)-a $.
    Hence
    $ \Theta(B,a) $ is also in the image
    since it can be written as $ \bigcap_{b \in B} \Theta(\{a,b,a \vee b\},a) $.

    For an object $ F \in \Fun(A, \cat{C})^{\omega} $,
    we can take a saturated subset~$ B $
    such that $ F $ is equivalent to the left Kan extension of $ F \rvert_{B} $.
    Then we define $ s(F) $,
    which is a priori dependent on~$ B $,
    as follows:
    \begin{equation*}
        s(F) = \bigvee_{a \in B}
        \sqrt{F(a)} \wedge \Theta(B,a)
        \in \Zar(\cat{C}^{\omega}) \otimes \Free(\Arch(A)).
    \end{equation*}
    Here we abuse the notation
    by identifying $ \Free(\Arch(A)) $ with its image
    under the monomorphism $ \Free(\Arch(A)) \hookrightarrow \P(\Arch(A)) $.

    First we wish to prove that $ s(F) $ is independent of the choice of~$ B $.
    Let $ B' $ be another saturated subset
    such that $ F $ is equivalent to the
    left Kan extension of $ F \rvert_{B'} $.
    We need to prove the following equality:
    \begin{equation*}
        \bigvee_{a \in B}
        \sqrt{F(a)} \wedge \Theta(B,a)
        =
        \bigvee_{a \in B'}
        \sqrt{F(a)} \wedge \Theta(B',a).
    \end{equation*}
    By considering a saturated set containing $ B \cup B' $,
    we may assume that $ B \subset B' $.
    For any minimal element~$ b' $ of $ B' \setminus B $,
    the subset $ B' \setminus \{b'\} $ is also saturated.
    Hence by induction
    we may also assume that $ B' \setminus B = \{b'\} $ for some $ b' \in B' $.
    If $ b' $ is a minimal element of $ B' $,
    then we get the equality since in this case
    $ F(b') $ is a zero object
    and $ \Theta(B,a) = \Theta(B',a) $ for any $ a \in B $.
    Let us consider the case when $ b' $ is not minimal.
    Since $ B $ is saturated,
    we can take the greatest element $ b $ of
    the set
    $ \{a \in B \mid a \leq b'\} $.
    Consider an element $ a \in B $.
    It is clear that $ \Theta(B,a) \supset \Theta(B',a) $ holds
    and this inclusion becomes an equality if $ a \nleq b' $.
    In fact,
    it also becomes an equality
    if $ a \leq b $ and $ a \neq b $:
    Suppose that $ J \in \Theta(B,a) $ fails
    to belong to $ \Theta(B',a) $.
    Then we have $ (a+J) \cap B' = \{a,b'\} $.
    But in this case, from the inequality
    $ 0 \leq b-a \leq b'-a $ we get $ b \in (a+J) \cap B $,
    which contradicts our assumption.
    Therefore,
    it is enough to show that
    $ \Theta(B,b) = \Theta(B',b) \cup \Theta(B',b') $ holds
    since we have $ F(b) \simeq F(b') $.
    First we prove $ \Theta(B,b) \subset \Theta(B',b) \cup \Theta(B',b') $.
    For $ J \in \Theta(B,b) \setminus \Theta(B',b) $,
    we have
    $ (b'+J) \cap B' \subset (b+J) \cap B' = \{b,b'\} $
    and so $ J \in \Theta(B',b') $.
    To prove the other inclusion,
    it remains to show that
    $ \Theta(B,b) \supset \Theta(B',b') $ holds.
    For $ J \in \Theta(B',b') $ and $ a \in (b+J) \cap B $,
    by $ 0 \leq (a \vee b') - b' = a - (a \wedge b') \leq a - b \in J $,
    we have $ a \vee b' \in (b'+J) \cap B' = \{b'\} $.
    This means $ a = b $,
    which completes the proof of the well-definedness of~$ s(F) $.

    Hence we have
    a function $ s \colon \Fun(A, \cat{C})^{\omega}
    \to \Zar(\cat{C}^{\omega}) \otimes \Free(\Arch(A)) $.
    We wish to prove that $ s $ is a support.
    Since it is an $ A $-semisupport and
    $ s(\unit) = 1 $ by definition,
    we are reduced to showing that $ s(F \otimes G) = s(F) \wedge s(G) $
    for $ F, G \in \Fun(A, \cat{C})^{\omega} $.
    We note that the assignments $ (F,G) \mapsto s(F \otimes G) $
    and $ (F,G) \mapsto s(F) \wedge s(G) $
    are both $ A $-semisupports in each variable.
    Hence by \cref{ss-gen},
    it suffices to show that
    $ s((C \otimes D)\{0/a_1\} \dotsb \{0/a_{m+n}\}) $
    equals to $ s(C\{0/a_1\} \dotsb \{0/a_m\})
    \wedge s(D\{0/a_{m+1}\} \dotsb \{0/a_{m+n}\}) $
    for $ C, D \in \cat{C}^{\omega} $
    and $ a_1, \dotsc, a_{m+n} \in A_{\geq 0} $.
    This claim follows
    if we have $ s(F\{0/b\}) = s(F) \wedge s(\unit\{0/b\}) $
    for $ F \in \Fun(A, \cat{C})^{\omega} $ and $ b \in A_{\geq 0} $.
    To prove this,
    by using \cref{ss-gen} again, we may assume
    that $ F $ can be written as $ C\{0/a_1\} \dotsb \{0/a_n\} $
    with $ C \in \cat{C}^{\omega} $ and $ a_1, \dotsc, a_n \in A_{\geq 0} $
    satisfying $ a_i \wedge a_j = 0 $ if $ i \neq j $.
    We may furthermore suppose
    that $ a_i > 0 $ for each~$ i $ and $ b > 0 $;
    otherwise the claim is trivial.
    For $ I \subset \{1, \dotsc, n\} $
    let $ a_I $ denote the sum $ \sum_{i \in I} a_i $,
    which is equal to
    the join $ \bigvee_{i \in I} a_i $ taken
    in~$ A_{\geq 0} $ by assumption.
    We now take the following two subsets of $ A $,
    both of which are saturated by assumption:
    \begin{align*}
        B &=
        \{a_I \mid I \subset \{1, \dotsc, n\}\},
        \\
        B' &=
        B \cup
        \{a_I + (a_{I'} \vee b) \mid \text{$ I, I' \subset \{1, \dotsc, n\} $
        satisfying $ I \cap I' = \emptyset $}\}.
    \end{align*}
    Then $ F $ and~$ F\{0/b\} $ are
    left Kan extensions of~$ F \rvert_B $ and~$ F\{0/b\} \rvert_{B'} $,
    respectively.
    We first prove that $ s(F\{0/b\}) \leq s(F) \wedge s(\unit\{0/b\}) $.
    Since we have $ s(F\{0/b\}) \leq s(F) $
    by the fact that $ s $ is an $ A $-semisupport,
    we need to show $ s(F\{0/b\}) \leq s(\unit\{0/b\}) $.
    Unwinding the definitions,
    we are reduced to proving that
    $ F\{0/b\}(c) \simeq 0 $
    or $ \Theta(B',c) \subset \Theta(\{0,b\}, 0) $ holds
    for each $ c \in B' $.
    If $ F\{0/b\}(c) $ is not zero,
    $ F(c) $ or $ F\{b\}(c) \simeq F(c-b) $ is not zero.
    These two cases are treated separately as follows:
    \begin{enumerate}
        \item
            If $ F(c) $ is not zero, then
            we have either $ c = 0 $ or $ c = b $.
            In the former case,
            we have indeed $ \Theta(B',0) \subset \Theta(\{0,b\},0) $
            since $ b \in B' $.
            In the latter case,
            we have that
            the morphism $ F(b) \to F(0) $ is
            equivalent to the identity of~$ C $.
            Hence $ F\{0/b\}(b) $ is zero, which is a contradiction.
        \item
            If $ F\{b\}(c) $ is not zero,
            then we have that $ c \in B $
            or $ c = a_I \vee b $ for some $ I \subset \{1, \dotsc, n\} $.
            In the former case,
            we have $ \Theta(B',c) \subset \Theta(\{0,b\},0) $
            since $ c+b \in B' $.
            In the latter case,
            we have $ c = a_I \vee b < a_I + b \leq c+b $;
            if the first inequality is an equality
            we have $ F\{b\}(c) = F\{b\}(a_I + b) \simeq F(a_I) \simeq 0 $,
            which is a contradiction.
            Combining this with $ a_I + b \in B' $,
            we have $ \Theta(B',c) \subset \Theta(\{0,b\},0) $.
    \end{enumerate}
    Next we prove
    that $ s(F\{0/b\}) \geq s(F) \wedge s(\unit\{0/b\}) $.
    Note that the right hand side equals to
    $ \sqrt{C} \wedge \{J \in \Theta(B,0) \mid b \notin J\} $
    by definition.
    Hence the claim follows
    from the observation
    that $ b \notin J \in \Theta(B,0) $
    implies $ J \in \Theta(B',0) $.

    Therefore we obtain
    a morphism~$ g \colon \Zar(\Fun(A, \cat{C})^{\omega})
    \to \Zar(\cat{C}^{\omega}) \otimes \Free(\Arch(A)) $
    of distributive lattices.
    We have that $ g \circ f $ is the identity
    by checking it for
    elements of $ \Zar(\cat{C}^{\omega}) $
    and $ \Arch(A_{\geq 0}) $.
    Also, by applying \cref{ss-gen},
    the computations $ g\bigl(\sqrt{C}\bigr) = \sqrt{C} $ for $ C \in \cat{C}^{\omega} $
    and $ g(\unit\{0/a\}) = \langle a \rangle $ for $ a \in A_{\geq 0} $ show
    that $ f \circ g $ is the identity.
    Hence we conclude that $ f $ is an isomorphism.
\end{proof}

\appendix

\section{Bases in higher topos theory}\label{s_basis}

In this appendix, we develop
the theory of bases for sites in the $ \infty $-categorical setting.
The main result is \cref{hyp-dense},
which says that an $ \infty $-site and its basis define
the same $ \infty $-topos after hypercompletion.
See \autocite{ultra} for a discussion in the $ 1 $-categorical setting.

The only result in this appendix
that we need in the main body of this paper
is \cref{ec5a5ba37f}, which is a corollary
of \cref{hyp-dense}.

\begin{remark}\label{3c155ff2f0}
    In the previous version of the preprint~\autocite{AsaiShah},
    Asai and Shah claimed
    that the map~$ i_* $ we describe in \cref{poset-case}
    is an equivalence, which implies \cref{ec5a5ba37f}.
    However, the claim is not true, as we see in \cref{31116799a2}.
    As this paper is being written,
    its corrected version, which
    still covers the case we use in the main body of this paper,
    has appeared. They use a different argument to prove it.
    We also note that we prove a conjecture stated in their preprint;
    see \cref{poset-case}.
\end{remark}

Since giving a Grothendieck topology on an $ \infty $-category
is the same thing as giving that on its homotopy category
(see \autocite[Remark~6.2.2.3]{HTT}),
we can translate notions used for sites
into the $ \infty $-categorical setting
without making any essential change.
Note that here we do not impose the existence of any pullbacks
on $ \infty $-categories.

\begin{definition}\label{b549dfa54c}
    An \emph{$ \infty $-site} is a small $ \infty $-category
    equipped with a Grothendieck topology.

    A \emph{basis} for an $ \infty $-site~$ \cat{C} $
    is a full subcategory $ \cat{B} \subset \cat{C} $ such that
    for every object $ C \in \cat{C} $ there exists
    a set of morphisms $ \{B_i \to C \mid i \in I\} $ that satisfies $ B_i \in \cat{B} $
    for all $ i \in I $ and generates a covering sieve on $ C $.
    Note that in this case there exists a unique
    Grothendieck topology on $ \cat{B} $ such that
    a sieve~$ \cat{B}_{/B}^{(0)} $ on~$ B $
    is covering if and only if its image under
    the inclusion $ \cat{B}_{/B} \hookrightarrow \cat{C}_{/B} $ generates a covering sieve.
    We always regard a basis as an $ \infty $-site by using
    this Grothendieck topology.
\end{definition}

\begin{example}\label{6f9ed17776}
    Considering the poset of open sets
    of a topological space
    equipped with the canonical topology,
    the notion of basis specializes
    to that of basis used in point-set topology.
\end{example}

\begin{remark}\label{eb4b669b9a}
    In ordinary topos theory,
    a basis is often referred to as a dense subsite.
    But in the $ \infty $-categorical setting,
    the term ``dense'' can be misleading since
    a basis need not define the same
    $ \infty $-topos; see \autocite[Example~20.4.0.1]{SAG}
    or \cref{31116799a2}.
\end{remark}

\begin{proposition}\label{7bf26b715a}
    Let $ \cat{C} $ be an $ \infty $-site.
    Suppose that $ \cat{B} $ is a basis for~$ \cat{C} $
    and $ F $ is a presheaf on~$ \cat{B} $.
    Then $ F $ is a sheaf on~$ \cat{B} $ if and only if
    its right Kan extension is a sheaf on~$ \cat{C} $.
    Especially, by restricting the right Kan extension functor
    $ \PShv(\cat{B}) \hookrightarrow \PShv(\cat{C}) $, we obtain
    a geometric embedding $ \Shv(\cat{B}) \hookrightarrow \Shv(\cat{C}) $.
\end{proposition}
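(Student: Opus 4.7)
The plan is to exploit full faithfulness of $i \colon \cat{B} \hookrightarrow \cat{C}$, which gives that $i_{*} \colon \PShv(\cat{B}) \to \PShv(\cat{C})$ is fully faithful with unit $\mathrm{id} \to i^{*}i_{*}$ an equivalence; explicitly, $(i_{*}F)(C) \simeq \lim_{(B \to C) \in \cat{B}_{/C}^{\op}} F(B)$ and $(i_{*}F)|_{\cat{B}} \simeq F$. I will prove both directions of the biconditional by cofinality arguments of similar flavor.

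\emph{Forward direction.} Suppose $F \in \Shv(\cat{B})$ and $R$ is a covering sieve on $C \in \cat{C}$. Expanding $(i_{*}F)(C)$ and $\lim_{R^{\op}} i_{*}F$ via the pointwise formula and commuting the double limit on the right, an initiality argument---for each $(B \to C) \in \cat{B}_{/C} \cap R$ the fiber of the forgetful functor admits the initial object $(D = B, B \xrightarrow{\mathrm{id}} B)$---reduces the task to proving
\[
\lim_{(B \to C) \in \cat{B}_{/C}^{\op}} F(B) \;\simeq\; \lim_{(B \to C) \in (\cat{B}_{/C} \cap R)^{\op}} F(B).
\]
For each $(B \to C) \in \cat{B}_{/C}$, the pullback $R^{B} \subset \cat{C}_{/B}$ of $R$ along $B \to C$ is a covering sieve on $B$; combining the basis condition on each $D \in R^{B}$ with the transitivity axiom of the Grothendieck topology, the restriction $R^{B} \cap \cat{B}_{/B}$ generates a covering sieve on $B$ in $\cat{C}$, hence is covering in the basis topology on $\cat{B}$. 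The sheaf property of $F$ then rewrites each $F(B)$ as a limit over this restricted sieve, and an analogous initiality argument reassembles the iterated limit into the right-hand side.

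\emph{Reverse direction and geometric embedding.} Suppose $i_{*}F$ is a sheaf on $\cat{C}$, and let $R_{0}$ be a covering sieve on $B \in \cat{B}$ generating a covering sieve $R$ on $B$ in $\cat{C}$. A direct computation using the sheaf condition on $i_{*}F$, the pointwise formula, and the same initiality argument yields
\[
F(B) \;\simeq\; (i_{*}F)(B) \;\simeq\; \lim_{R^{\op}} (i_{*}F) \;\simeq\; \lim_{R_{0}^{\op}} F,
\]
where the final step uses the equality $R_{0} = R \cap \cat{B}_{/B}$ (an immediate consequence of full faithfulness of $i$ and the sieve property of $R_{0}$); thus $F$ is a sheaf. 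For the geometric embedding, full faithfulness of $i_{*}$ on presheaves restricts to $\Shv(\cat{B}) \hookrightarrow \Shv(\cat{C})$ via the forward direction, and a left-exact left adjoint is provided by $L' \circ i^{*}|_{\Shv(\cat{C})}$, where $L'$ is sheafification on $\cat{B}$ (both factors being left exact).

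The principal technical point is executing the initiality arguments precisely: the fibers always admit an explicit initial object, so the formal cofinality is clean, but the iterated-limit rearrangement in the forward direction must carefully handle the interplay between the basis topology on $\cat{B}$ and pullback stability of covering sieves in $\cat{C}$.
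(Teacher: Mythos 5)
The paper itself gives no argument here, instead deferring to the proof of \cite[Proposition~B.6.6]{ultra}, so I can only assess your proof on its own terms; it is essentially correct and follows the expected lines, but a couple of the cofinality steps are phrased a bit loosely for the $\infty$-categorical setting.

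The key identifications you need are correct and worth isolating. For a covering sieve $R$ on $C$, the inclusion $(R \cap \cat{B}_{/C})^{\op} \hookrightarrow R^{\op}$ has, at each $(D \to C) \in R$, slice category canonically equivalent to $(\cat{B}_{/D})^{\op}$ (here you use that $R$ is a sieve, so the condition $B \to C \in R$ is automatic once $B \to D$ is given). This shows, purely by the pointwise formula, that $i_*F\rvert_{R^{\op}}$ \emph{is} the right Kan extension of $F\rvert_{(R \cap \cat{B}_{/C})^{\op}}$ along that inclusion, and hence $\lim_{R^{\op}} i_*F \simeq \lim_{(R \cap \cat{B}_{/C})^{\op}} F$ with no separate Fubini step required. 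You arrive at the same conclusion by factoring through a Grothendieck construction $\cat{E}$ and invoking cofinality of the section $(B \to C) \mapsto (B, B, \mathrm{id})$. That cofinality is genuinely true, but the justification you give --- ``the fibers always admit an explicit initial object'' --- is not the right criterion in the $\infty$-categorical setting: Quillen's Theorem~A requires the comma categories to be weakly contractible, not the fibers, and a fiberwise initial section need not be coinitial. In your case the relevant comma categories do in fact admit terminal objects (essentially because the $D$-component of a map into $s(B' \to C)$ is determined by the $B$-component), so the conclusion is saved, but the stated reason would not withstand scrutiny. I would recommend either replacing the ``commuting the double limit'' step with the direct Kan-extension identification above, or spelling out the comma-category computation.

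The rest is fine: the observation that $R^{B} \cap \cat{B}_{/B}$ is covering in $\cat{B}$ (via the local-character axiom), the second Kan extension $\lim_{(\cat{B}_{/C})^{\op}} F \simeq \lim_{(R \cap \cat{B}_{/C})^{\op}} F$ using the sheaf condition at each $B \to C$ (again by slices, now equal to $(R^{B} \cap \cat{B}_{/B})^{\op}$), the identity $R_{0} = R \cap \cat{B}_{/B}$ by fullness of $\cat{B}$, and the description of the left exact left adjoint as $L' \circ i^{*}\rvert_{\Shv(\cat{C})}$ are all correct. One small point worth adding explicitly for the reverse direction: the chain of equivalences you write must be checked to coincide with the canonical restriction map $F(B) \to \lim_{R_{0}^{\op}} F$, which it does because every step is induced by restriction.
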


We omit the proof of this fact
because this is proven in the same way
as in the $ 1 $-categorical setting;
see the second and third paragraphs of
the proof of \autocite[Proposition~B.6.6]{ultra},
but beware that some arguments
in the first paragraph cannot be translated to
our setting.

The main result is the following:

\begin{theorem}\label{hyp-dense}
    Let $ \cat{B} $ be a basis for an $ \infty $-site~$ \cat{C} $
    and $ G $ a presheaf on~$ \cat{C} $.
    Then $ G $ is a hypercomplete object of the $ \infty $-topos $ \Shv(\cat{C}) $
    if and only if the following conditions are satisfied:
    \begin{enumerate}
        \item
            \label{f4f8765973}
            The restriction~$ G \rvert_{\cat{B}^{\op}} $
            is a hypercomplete object of the $ \infty $-topos $ \Shv(\cat{B}) $.
        \item
            \label{76e550cb66}
            The functor~$ G $ is
            a right Kan extension of~$ G \rvert_{\cat{B}^{\op}} $.
    \end{enumerate}
\end{theorem}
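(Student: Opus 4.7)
Let $i\colon\cat{B}\hookrightarrow\cat{C}$ denote the inclusion. By \cref{7bf26b715a}, $i$ induces a geometric embedding $i_*\colon\Shv(\cat{B})\hookrightarrow\Shv(\cat{C})$, whose left adjoint $i^*$ is given by restriction of sheaves. Since $i_*$ is fully faithful, the counit $i^*i_*\to\mathrm{id}$ is an equivalence, so condition \cref{76e550cb66} may be reformulated as saying that the unit $\eta_G\colon G\to i_*i^*G$ is an equivalence; by the triangle identities, $i^*(\eta_G)$ is always an equivalence, regardless of $G$. The crux of the argument is the following \emph{key lemma}: a morphism $f$ in $\Shv(\cat{C})$ is $\infty$-connective if and only if $i^*(f)$ is $\infty$-connective in $\Shv(\cat{B})$. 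The ``only if'' direction is immediate because $i^*$ is left exact and preserves colimits. As a consequence, $\eta_G$ is $\infty$-connective in $\Shv(\cat{C})$ for every $G$, and since $i^*(i_*(g))\simeq g$ naturally, the functor $i_*$ carries $\infty$-connective morphisms in $\Shv(\cat{B})$ to $\infty$-connective morphisms in $\Shv(\cat{C})$.

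For the backward direction, assume $i^*G$ is hypercomplete in $\Shv(\cat{B})$ and $G\simeq i_*i^*G$. For any $\infty$-connective $f\colon F'\to F$ in $\Shv(\cat{C})$, the adjunction yields $\Map(F,G)\simeq\Map(i^*F,i^*G)$ and similarly for $F'$; since $i^*(f)$ is $\infty$-connective and $i^*G$ is hypercomplete, the map $\Map(i^*F,i^*G)\to\Map(i^*F',i^*G)$ is an equivalence, so $G$ is hypercomplete. Applied to $G=i_*H$ for any hypercomplete $H\in\Shv(\cat{B})$, the same computation shows that $i_*$ sends hypercomplete sheaves on $\cat{B}$ to hypercomplete sheaves on $\cat{C}$.

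For the forward direction, assume $G$ is hypercomplete, and let $H$ be the hypercompletion of $i^*G$ in $\Shv(\cat{B})$, so that the unit $\varphi\colon i^*G\to H$ is $\infty$-connective and $H$ is hypercomplete. Form the composite
\[
    G\xrightarrow{\eta_G}i_*i^*G\xrightarrow{i_*(\varphi)}i_*H.
\]
Both maps are $\infty$-connective by the two consequences of the key lemma noted above, and $i_*H$ is hypercomplete by the backward direction just proved. An $\infty$-connective morphism between hypercomplete sheaves is an equivalence, so the composite is an equivalence. Therefore $G\simeq i_*H$, which forces $i^*G\simeq H$ (proving \cref{f4f8765973}) and $\eta_G$ to be an equivalence (proving \cref{76e550cb66}).

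The main obstacle is the nontrivial direction of the key lemma, namely that $i^*$ reflects $\infty$-connectivity. Because $i^*$ is left exact and thus commutes with the truncation functors $\tau_{\leq n}$, this reduces to showing that $i^*$ is conservative when restricted to $n$-truncated sheaves for every $n$, or equivalently that $i_*$ restricts to an equivalence $\Shv(\cat{B})_{\leq n}\simeq\Shv(\cat{C})_{\leq n}$ for each finite $n$. I would establish this by induction on $n$: the case $n=0$ is the classical comparison lemma, asserting that a basis for a site determines the same $1$-topos of discrete sheaves, and the inductive step is handled using the long exact sequence of homotopy sheaves together with the defining property of a basis, which allows any covering sieve on an object of $\cat{C}$ to be refined by a family of morphisms from objects of $\cat{B}$, thereby reducing local claims on $\cat{C}$ to local claims on $\cat{B}$.
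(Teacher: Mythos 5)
Your proof structure is sound conditional on the key lemma, and the reduction of the theorem to that lemma is a genuinely different route than the paper takes. However, the proof of the key lemma itself is where the real mathematical content lies, and your sketch for it has a significant gap.

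Observe first that the key lemma (that $i^*$ reflects $\infty$-connectivity) is essentially \emph{equivalent} to the theorem: the theorem gives $\Shv(\cat{B})^{\hyp}\simeq\Shv(\cat{C})^{\hyp}$, from which the key lemma follows by checking $\infty$-connectivity on hypercompletions; and you have shown the converse implication in your write-up. So reducing the theorem to the key lemma does not itself reduce the difficulty --- what matters is whether the key lemma admits an independent proof. Your proposed route is to prove that $i_*$ restricts to an equivalence on $n$-truncated objects for each $n$, and here the sketch is too thin. The $n=0$ case is the classical comparison lemma for $1$-toposes, which is fine as a citation, but the ``inductive step'' is not actually an induction: what you are implicitly using is that $i^*$, being left exact, commutes with the truncations $\tau_{\leq n}$ and with the formation of the relative homotopy group sheaves $\pi_k(f)$, so that the conservativity of $i^*$ on $n$-truncated objects reduces (via the long exact sequence) to the conservativity of $i^*$ on \emph{discrete} objects of the appropriate slice toposes. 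For this to go through you need (a) the $n=0$ comparison to apply to slices $\Shv(\cat{C})_{/X}$ and their corresponding bases, and (b) care with the fact that the $\pi_k$ live in different slice toposes as $k$ varies. None of this is impossible, but your proposal does not actually carry it out, and ``together with the defining property of a basis\dots reducing local claims on $\cat{C}$ to local claims on $\cat{B}$'' is not an argument.

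For comparison, the paper does not attempt to establish the cotopological property up front. Instead it proves directly that for hypercomplete $G$, the unit $G\to i_*i^*G$ is an equivalence. It does so by identifying, for each $C\in\cat{C}$, the map $G(C)\to(i_*i^*G)(C)$ with the effect of $\Map(\text{--},G)$ on the sheafified morphism $Lf$, where $f\colon\injlim_{B\in\cat{B}_{/C}}j(B)\to j(C)$, and then invokes a relative version of a connectivity lemma from SAG (\cref{rel-conn}) to show $Lf$ is $\infty$-connective. This is the paper's one hard step; after it, the theorem follows by a soft subtopos bootstrap: the essential image $\cat{X}$ of $\Shv(\cat{C})^{\hyp}$ under $i^*$ is shown to be a subtopos squeezed between $\Shv(\cat{B})^{\hyp}$ and $\Shv(\cat{B})$, and since $\cat{X}$ is hypercomplete the lower inclusion is an equality. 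In other words, the paper replaces the hard part of your key lemma with a colimit-connectivity estimate, while your proposal replaces it with a truncated comparison lemma that you have not supplied. If you want to complete your approach, the honest task is to write out the conservativity of $i^*$ on $n$-truncated objects as a theorem in its own right, being careful about slice toposes and relative homotopy sheaves; alternatively, adopt the paper's \cref{rel-conn} as the technical input, which sidesteps the truncation-by-truncation analysis entirely.
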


We note that Jacob Lurie
let the author know
that this result could be proven using hypercoverings.
Here we will give a different proof,
which does not use (semi)simplicial machinery.

Before giving the proof of this theorem,
let us collect its formal consequences.

\begin{corollary}\label{778fc7ecc4}
    Let $ \cat{B} $ be a basis for an $ \infty $-site~$ \cat{C} $.
    Then the geometric embedding $ \Shv(\cat{B}) \hookrightarrow \Shv(\cat{C}) $
    obtained in \cref{7bf26b715a} is cotopological.
    In particular,
    it induces equivalences between their hypercompletions,
    their Postnikov completions,
    their bounded reflections,
    and their $ n $-localic reflections for any~$ n $.
\end{corollary}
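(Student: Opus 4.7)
The plan is to reduce everything to the equivalence of hypercompletions, which is essentially packaged by \cref{hyp-dense}. First, I would observe that \cref{hyp-dense} characterizes the hypercomplete objects of $\Shv(\cat{C})$ as precisely those of the form $j_*F$, with $j_*$ the right Kan extension functor from \cref{7bf26b715a} and $F \in \Shv(\cat{B})$ hypercomplete. Since $j_*$ is fully faithful, its restriction to hypercomplete sheaves is also fully faithful, and condition~\cref{76e550cb66} of \cref{hyp-dense} guarantees essential surjectivity. This would yield an equivalence $\Shv(\cat{B})^{\wedge} \simeq \Shv(\cat{C})^{\wedge}$ that is compatible with the geometric embedding.

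Second, I would invoke the standard criterion for cotopological geometric embeddings (see \autocite[Section~6.5.2]{HTT}): such an embedding is cotopological precisely when it induces an equivalence on hypercompletions. The previous step then gives the cotopological assertion immediately.

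Third, for the remaining three equivalences, I would exploit the general fact that each of the Postnikov completion, bounded reflection, and $n$-localic reflection factors through the hypercompletion. Concretely, $n$-truncated objects are automatically hypercomplete, so the $n$-localic reflection is determined by data visible inside the hypercompletion; analogous observations handle the bounded and Postnikov cases, since bounded and Postnikov complete sheaves both live inside the hypercomplete subcategory. Hence the equivalence on hypercompletions descends to equivalences on each of these three reflections.

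I expect the main obstacle to lie not in the mathematical content but in carefully matching the various definitions used here with those in the standard references, and invoking the appropriate general statements about cotopological morphisms (most naturally developed in \autocite{SAG}). Once \cref{hyp-dense} is in hand, the rest of \cref{778fc7ecc4} is essentially formal; the substantive work has already been carried out in the proof of \cref{hyp-dense} itself.
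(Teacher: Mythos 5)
Your proof is correct and matches the approach the paper intends: the hypercompletion equivalence is essentially contained in the statement (and even more explicitly in the proof) of \cref{hyp-dense}, and once one knows $\Shv(\cat{C})^{\hyp} \subseteq \Shv(\cat{B}) \subseteq \Shv(\cat{C})$, cotopologicality follows from the standard characterization, with the remaining reflections factoring through the hypercompletion as you describe. The only minor point worth double-checking is the precise form of the HTT criterion you invoke — the cleanest formulation is that a geometric embedding $\cat{X} \hookrightarrow \cat{Y}$ is cotopological if and only if $\cat{Y}^{\hyp} \subseteq \cat{X}$ as subtoposes of $\cat{Y}$, which is exactly what \cref{hyp-dense} (or its proof) furnishes here.
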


\begin{corollary}\label{c3173bf985}
    Let $ \cat{B} $ be a basis for an $ \infty $-site~$ \cat{C} $.
    Suppose that both $ \cat{B} $ and~$ \cat{C} $ are $ n $-category
    for some~$ n $
    and have finite limits
    (but the inclusion need not preserve them).
    Then the geometric embedding obtained
    in \cref{7bf26b715a} is an equivalence.
\end{corollary}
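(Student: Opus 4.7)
The plan is to combine \cref{778fc7ecc4} with the classification of $n$-localic $\infty$-toposes. That corollary already tells us that the geometric embedding $f \colon \Shv(\cat{B}) \hookrightarrow \Shv(\cat{C})$ is cotopological, and in particular induces an equivalence between the $n$-localic reflections for every $n$. The statement will therefore follow once I verify that $\Shv(\cat{B})$ and $\Shv(\cat{C})$ are themselves $n$-localic for the $n$ in the hypothesis: in the commutative square whose top row is $L_n(f) \colon L_n \Shv(\cat{B}) \to L_n \Shv(\cat{C})$, whose bottom row is $f$, and whose columns are the respective counits, the top row is an equivalence by \cref{778fc7ecc4} and the two columns become equivalences as soon as we know $n$-localness, so the bottom row must be an equivalence as well.

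For the $n$-localic assertion, I would appeal to \autocite[Section~6.4.5]{HTT}: an $\infty$-topos $\Shv(\cat{D})$ is $n$-localic whenever the site $\cat{D}$ is equivalent to an $n$-category that admits finite limits. This applies to both $\cat{D} = \cat{B}$ and $\cat{D} = \cat{C}$ by hypothesis, using on the $\cat{B}$-side the Grothendieck topology produced in \cref{b549dfa54c}. The condition that the inclusion $\cat{B} \hookrightarrow \cat{C}$ need not preserve finite limits is irrelevant, since the $n$-localness is a property of each site individually.

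The main point requiring care is matching the indexing conventions for ``$n$-category'' and ``$n$-localic'' between this paper and \autocite{HTT}, as these can differ by a shift across references; modulo this bookkeeping, no further ingredient is needed beyond \cref{778fc7ecc4} and the standard theory of $n$-localic $\infty$-toposes.
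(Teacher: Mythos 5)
Your argument is correct and matches the paper's intended (though unwritten) proof: the corollary is deliberately placed right after \cref{778fc7ecc4}, whose clause about $n$-localic reflections is meant to be combined with the standard criterion from \autocite[Section~6.4.5]{HTT} that $\Shv(\cat{D})$ is $n$-localic whenever $\cat{D}$ is an $n$-category with finite limits, exactly as you do. The indexing caveat you raise is harmless here, since the paper uses the same ``$n$-category'' convention as \autocite{HTT}.
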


\begin{remark}\label{daf8ad8f56}
    In \autocite[Lemma~C.3]{Hoyois14},
    Hoyois gave another sufficient condition
    under which the geometric embedding
    $ \Shv(\cat{B}) \hookrightarrow \Shv(\cat{C}) $ itself is an equivalence.
    As a special case of his result, if $ \cat{B} $ and~$ \cat{C} $ both admit
    pullbacks and the inclusion preserves them,
    it becomes an equivalence.
\end{remark}

Our proof uses
the following relative variant of \autocite[Lemma~20.4.5.4]{SAG}:

\begin{lemma}\label{rel-conn}
    Let $ f^* \colon \cat{Y} \to \cat{X} $ be the left adjoint of
    a geometric morphism between $ \infty $-toposes
    and $ \cat{D} \subset \cat{Y} $ an essentially small full subcategory.
    Suppose that for every $ Y \in \cat{Y} $ there
    exists a family of objects~$ (V_i)_{i \in I} $ of~$ \cat{D} $
    and a morphism $ \coprod_{i \in I} V_i \to Y $ whose
    image under~$ f^* $ is an effective epimorphism.
    Then the object $ f^* \bigl(\injlim_{V \in \cat{D}} V\bigr) \in \cat{X} $
    is $ \infty $-connective.
\end{lemma}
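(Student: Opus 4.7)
Since $f^*$ preserves colimits, we have $f^*(\injlim_V V) \simeq \injlim_{V \in \cat{D}} f^*(V)$; write $c'$ for this common object. The strategy is to adapt the proof of \cite[Lemma~20.4.5.4]{SAG}, which treats the non-relative case where $f^*$ is the identity, substituting our $f^*$-effective-epi hypothesis wherever the cited result uses effective epimorphism. Concretely, we would show that $c' \to 1_\cat{X}$ is $n$-connective for every $n \geq 0$, proceeding by induction on~$n$.

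For the base case $n = 0$, apply the hypothesis to $Y = 1_\cat{Y}$ to obtain a family $(V_i)_{i \in I}$ in $\cat{D}$ and a morphism $\coprod_i V_i \to 1_\cat{Y}$ whose image under~$f^*$ is an effective epimorphism. The canonical maps $V_i \to c$ combine into $\coprod_i V_i \to c$, and the triangle $\coprod_i V_i \to c \to 1_\cat{Y}$ automatically commutes since the outer leg lands in a terminal object. Applying $f^*$, the effective epimorphism $f^*(\coprod_i V_i) \to 1_\cat{X}$ factors through~$c'$, whence the second leg $c' \to 1_\cat{X}$ is itself an effective epimorphism.

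For the inductive step, the plan is to iteratively apply the hypothesis to build a simplicial object $\tilde U_\bullet$ in~$\cat{Y}$, each of whose terms is a coproduct of objects of~$\cat{D}$, such that $f^*\tilde U_\bullet$ is a hypercover of $1_\cat{X}$: take $\tilde U_0 = \coprod_i V_i$ from the base case, and recursively feed the $n$-th matching object of $\tilde U_\bullet$ in $\cat{Y}$ into the hypothesis to produce~$\tilde U_{n+1}$. The realization $|f^*\tilde U_\bullet| \to 1_\cat{X}$ is then $\infty$-connective. The canonical maps $\tilde U_n \to c$ coming from the inclusion $\cat{D} \hookrightarrow \cat{Y}$ should assemble into a simplicial morphism into the constant simplicial object at~$c$, giving a factorization $|f^*\tilde U_\bullet| \to c' \to 1_\cat{X}$ of an $\infty$-connective map, from which one can extract $\infty$-connectivity of $c' \to 1_\cat{X}$. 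The main obstacle is that the face maps of~$\tilde U_\bullet$ come from the matching-object construction and are not \emph{a priori} morphisms in~$\cat{D}$; hence establishing that $\tilde U_n \to c$ is compatible with the simplicial structure (so that the comparison map to~$c'$ genuinely exists and is $\infty$-connective, allowing a two-out-of-three conclusion) is the delicate point. This may be addressed either by replacing the ad hoc hypercover with a suitable two-sided bar construction, or by proving the statement directly in its dual form via descent: show that for each $n$-truncated $T \in \cat{X}$, $\Map_\cat{X}(1_\cat{X}, T) \to \Map_\cat{X}(c', T)$ is an equivalence, by passing through the adjunction and invoking hyperdescent for $T$ with respect to the $f^*$-topology generated by $\cat{D}$.
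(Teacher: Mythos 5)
Your base case is correct and matches the spirit of the argument. But your inductive step takes a genuinely different route from the paper, and it has two gaps — one you flag, and one you do not. The gap you flag (compatibility of the cone maps $\tilde U_n \to c$ with the simplicial structure) is real, though its source is slightly different from what you say: since $\cat{D}$ is a \emph{full} subcategory, any $\cat{Y}$-morphism between objects of $\cat{D}$ is automatically a $\cat{D}$-morphism and hence compatible with the colimit cone into $c$; the actual obstruction is that a map from a summand $V_{n+1,i}$ to the coproduct $\tilde U_n = \coprod_j V_{n,j}$ need not factor through any single summand, so one does not even obtain a morphism of $\cat{D}$-objects to compare. The gap you do not flag is in the final step: from a factorization $|f^*\tilde U_\bullet| \to c' \to 1_{\cat{X}}$ with $|f^*\tilde U_\bullet| \to 1_{\cat{X}}$ $\infty$-connective and $|f^*\tilde U_\bullet| \to c'$ merely an effective epimorphism, one \emph{cannot} conclude that $c' \to 1_{\cat{X}}$ is $\infty$-connective. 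Left cancellation for connective morphisms (\cite[Proposition~6.5.1.16]{HTT}) requires the first leg itself to be $\infty$-connective, not just $0$-connective; the factorization $1 \to \B\ZZ \to 1$ in the $\infty$-topos of spaces, with the first map an effective epimorphism, already shows the ``two-out-of-three'' you invoke fails. Establishing that $|f^*\tilde U_\bullet| \to c'$ is $\infty$-connective would essentially require the lemma relativized to the slice over $c'$, so the argument as written is circular.

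The paper's proof is much shorter and deliberately avoids (semi)simplicial machinery: it runs the inductive connectivity argument of \cite[Lemma~20.4.5.4]{SAG} verbatim, inserting $f^*$ at the appropriate points, using only that $f^*$ is left exact and cocontinuous and that for each $Y \in \cat{Y}$ it restricts to a functor $\cat{Y}_{/Y} \to \cat{X}_{/f^*Y}$ which is again the left adjoint of a geometric morphism (so the hypothesis relativizes to slices). This sidesteps both of the issues above: there is no hypercover to build, and the passage to slice $\infty$-toposes is exactly what makes the induction go through. If you want to pursue a hypercover-style proof, the ``dual via descent'' variant you sketch at the end is closer to what would actually be needed, but it is not fleshed out here and is not the route the paper takes.
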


\begin{proof}
    This follows from a slight modification of
    the proof of \autocite[Lemma~20.4.5.4]{SAG}
    by using the fact that $ f^* $ preserves finite limits and colimits
    and it determines a functor $ \cat{Y}_{/Y} \to \cat{X}_{/f^*Y} $
    for any object $ Y \in \cat{Y} $,
    which is again the left adjoint of a geometric morphism.
\end{proof}

\begin{proof}[Proof of \cref{hyp-dense}]
    Let $ i^* \colon \PShv(\cat{C}) \to \PShv(\cat{B}) $ denote
    the restriction functor.
    We write $ i_* $ for its right adjoint.
    We let $ L $ be the sheafification
    functor associated to the $ \infty $-site~$ \cat{C} $
    and $ j \colon \cat{C} \to \PShv(\cat{C}) $
    the Yoneda embedding.

    Suppose that $ G $ is a hypercomplete object of $ \Shv(\cat{C}) $.
    We prove that the map $ G \to i_*i^*G $ is an equivalence.
    For $ C \in \cat{C} $,
    the map $ G(C) \to (i_*i^*G)(C) $ can be identified with
    the image of the morphism $ f \colon \injlim_{B \in \cat{B}_{/C}} j(B) \to j(C) $
    under the functor $ \Map_{\PShv(\cat{C})}(\X, G) $.
    Since $ G $ is a hypercomplete object of $ \Shv(\cat{C}) $,
    it suffices to prove that the morphism~$ Lf $ is $ \infty $-connective.
    We can see this by applying \cref{rel-conn}
    to the geometric embedding
    $ \Shv(\cat{C})_{/Lj(C)}
    \hookrightarrow \PShv(\cat{C})_{/j(C)}
    \simeq \PShv\bigl(\cat{C}_{/C}\bigr) $.

    Let $ \cat{X} $ denote the essential image of
    $ \Shv(\cat{C})^{\hyp} $ under~$ i^* $.
    It follows from what we have shown above
    that $ i^* $ restricts to determine
    an equivalence $ \Shv(\cat{C})^{\hyp} \to \cat{X} $.
    It also follows that the composition of left exact functors
    \begin{equation*}
        \PShv(\cat{B})
        \xrightarrow{i_*} \PShv(\cat{C})
        \xrightarrow{X \mapsto (LX)^{\hyp}} \Shv(\cat{C})^{\hyp}
        \xrightarrow{i^*} \cat{X}
    \end{equation*}
    is a left adjoint to the inclusion
    $ \mathcal{X} \hookrightarrow \PShv(\cat{B}) $:
    Indeed, for $ F \in \PShv(\cat{B}) $ and $ G \in \Shv(\cat{C})^{\hyp} $,
    we have
    \begin{equation*}
        \begin{split}
        \Map(F, i^*G)
        &\simeq
        \Map(i_*F, i_*i^*G)
        \simeq
        \Map(i_*F, G)
        \simeq
        \Map((Li_*F)^{\hyp}, G)
        \\
        &\simeq
        \Map((Li_*F)^{\hyp}, i_*i^*G)
        \simeq
        \Map(i^*((Li_*F)^{\hyp}), i^*G).
        \end{split}
    \end{equation*}
    Hence $ \cat{X} $ is a subtopos of $ \PShv(\cat{B}) $.
    Using \cref{7bf26b715a},
    we obtain inclusions
    $ \Shv(\cat{B})^{\hyp} \subset \cat{X} \subset \Shv(\cat{B}) $
    of subtoposes.
    Since $ \cat{X} \simeq \Shv(\cat{C})^{\hyp} $ is hypercomplete,
    the first inclusion is an equality.
    Therefore, the restriction of the adjoint pair
    $ (i^*, i_*) $ determines an equivalence
    $ \Shv(\cat{B})^{\hyp} \simeq \Shv(\cat{C})^{\hyp} $,
    which is a restatement of what we wanted to show.
\end{proof}

We conclude this appendix
by specializing
our results to the case
directly related to the main body of this paper.

\begin{example}\label{poset-case}
    Let $ P $ be a poset.
    Consider the canonical topology on the
    poset of open sets of~$ \Alex(P) $
    (see \cref{95b1e699de} for the definition).
    Then the subposet of principal cosieves,
    which is equivalent to~$ P^{\op} $,
    is a basis and the induced topology on it is trivial.
    Hence by \cref{778fc7ecc4},
    we have a cotopological inclusion
    $ i_* \colon \PShv(P^{\op}) \hookrightarrow \Shv(\Alex(P)) $.
    Since $ \PShv(P^{\op}) $ is already hypercomplete,
    $ i_* $ can be identified with the inclusion
    $ \Shv(\Alex(P))^{\hyp} \hookrightarrow \Shv(\Alex(P)) $.

    We note that this observation settles
    a conjecture posed by Asai and Shah in
    \autocite[Remark~2.6]{AsaiShah} affirmatively.
\end{example}

\begin{example}\label{ec5a5ba37f}
    In the situation of \cref{poset-case},
    the inclusion $ i_* $ is an equivalence
    when $ P $ is finite since
    in this case
    $ \Shv(\Alex(P)) $ is hypercomplete
    by \autocite[Corollaries~7.2.1.1 and~7.2.4.20]{HTT}.
    The same holds
    when $ P $ has finite joins since in this case
    $ \PShv(P^{\op}) $ is $ 0 $-localic.
    Actually, it is enough to assume
    that $ P $ has binary joins by \cref{daf8ad8f56}.
    We note that
    the class of all posets whose
    associated geometric embedding is an equivalence
    is closed under coproducts.
\end{example}

\begin{example}\label{31116799a2}
    In the situation of \cref{poset-case},
    the inclusion $ i_* $ itself need not be an equivalence
    in general.

    Consider the set $ P = \NN \times \{0,1\} $
    equipped with the ordering depicted as follows:
    \begin{equation*}
        \xymatrix{
            (0,1) \ar[r] \ar[rd] &
            (1,1) \ar[r] \ar[rd] &
            (2,1) \ar[r] \ar[rd] &
            (3,1) \ar[r] \ar[rd] &
            (4,1) \ar[r] \ar[rd] &
            \dotsb
            \\
            (0,0) \ar[r] \ar[ru] &
            (1,0) \ar[r] \ar[ru] &
            (2,0) \ar[r] \ar[ru] &
            (3,0) \ar[r] \ar[ru] &
            (4,0) \ar[r] \ar[ru] &
            \dotsb\rlap{.}
        }
    \end{equation*}
    We can check that the locale of open sets of $ \Alex(P) $
    is coherent, so
    the final object of $ \Shv(\Alex(P)) $ is compact
    by \autocite[Corollary~7.3.5.4]{HTT}.
    We now show that the final object of $ \PShv(P^{\op}) $,
    which is the constant functor taking the value $ * $,
    is not compact.
    If so,
    by \cref{02eb77190d},
    we can take $ n $ such that
    the final object is
    the left Kan extension of that of
    $ \Fun(\{0, \dotsc, n\} \times \{0,1\}, \Cat{S}) $,
    but then the value at $ (n+1, 0) $ becomes
    $ S^{n} $, which is a contradiction.

    We note that this $ \infty $-topos $ \Shv(\Alex(P)) $
    essentially appears in
    \autocite[Example~A9]{DuggerHollanderIsaksen04}.
    There they show the failure of hypercompleteness
    using a different argument.
\end{example}

\bibliographystyle{spmpsci}
\bibliography{references.bib}

\begin{thebibliography}{10}
\providecommand{\url}[1]{{#1}}
\providecommand{\urlprefix}{URL }
\expandafter\ifx\csname urlstyle\endcsname\relax
  \providecommand{\doi}[1]{DOI~\discretionary{}{}{}#1}\else
  \providecommand{\doi}{DOI~\discretionary{}{}{}\begingroup
  \urlstyle{rm}\Url}\fi

\bibitem{AsaiShah}
Asai, R., Shah, J.: Algorithmic canonical stratifications of simplicial
  complexes (2019).
\newblock \href{https://arxiv.org/abs/1808.06568v2}{\tt arXiv:1808.06568v2
  [math.AT]}

\bibitem{Balmer10}
Balmer, P.: Tensor triangular geometry.
\newblock In: Proceedings of the {{International Congress}} of
  {{Mathematicians}}. {{Volume II}}, pp. 85--112. {Hindustan Book Agency, New
  Delhi} (2010)

\bibitem{DuggerHollanderIsaksen04}
Dugger, D., Hollander, S., Isaksen, D.C.: Hypercovers and simplicial
  presheaves.
\newblock Math. Proc. Cambridge Philos. Soc. \textbf{136}(1), 9–51 (2004)

\bibitem{Gallauer18}
Gallauer, M.: Tensor triangular geometry of filtered modules.
\newblock Algebra Number Theory \textbf{12}(8), 1975--2003 (2018)

\bibitem{Hoyois14}
Hoyois, M.: A quadratic refinement of the
  {{Grothendieck}}-{{Lefschetz}}-{{Verdier}} trace formula.
\newblock Algebr. Geom. Topol. \textbf{14}(6), 3603--3658 (2014)

\bibitem{Johnstone82}
Johnstone, P.T.: Stone Spaces, \emph{Cambridge {{Studies}} in {{Advanced
  Mathematics}}}, vol.~3.
\newblock {Cambridge University Press, Cambridge} (1982)

\bibitem{KockPitsch17}
Kock, J., Pitsch, W.: Hochster duality in derived categories and point-free
  reconstruction of schemes.
\newblock Trans. Amer. Math. Soc. \textbf{369}(1), 223--261 (2017)

\bibitem{LiuSierra13}
Liu, Y.H., Sierra, S.J.: Recovering quivers from derived quiver
  representations.
\newblock Comm. Algebra \textbf{41}(8), 3013--3031 (2013)

\bibitem{HTT}
Lurie, J.: \ym{201704}Higher topos theory (2017)

\bibitem{HA}
Lurie, J.: \ym{201709}Higher algebra (2017)

\bibitem{SAG}
Lurie, J.: Spectral algebraic geometry (under construction) (2018)

\bibitem{ultra}
Lurie, J.: Ultracategories (2019)

\bibitem{Moulinos}
Moulinos, T.: The geometry of filtrations (2019).
\newblock \href{https://arxiv.org/abs/1907.13562v1}{\tt arXiv:1907.13562v1
  [math.AT]}

\end{thebibliography}

\end{document}